\newtheorem{prop}{Proposition}[section]
\newtheorem{thm}[prop]{Theorem}
\newtheorem{lemma}[prop]{Lemma}
\newtheorem{cor}[prop]{Corollary}
\newtheorem{definition}[prop]{{Definition}}
\newenvironment{defn}{\begin{definition} \rm}{\end{definition}}
\newtheorem{remark}[prop]{{Remark}}
\newcommand{\cA}{{\mathcal A}}
\newcommand{\cX}{{\mathcal X}}
\newcommand{\cZ}{{\mathcal Z}}
\newcommand{\cQ}{{\mathcal Q}}
\newcommand{\cK}{{\mathcal K}}
\newcommand{\cG}{{\mathcal G}}
\newcommand{\br}{{\bf r}}
\renewcommand{\aa}{\mathbb{A}}
\newcommand{\ppq}{\leqslant}
\newcommand{\pgq}{\geqslant}
\newcommand{\as}{\hat}
\newcommand{\Hom}{\operatorname{Hom}\nolimits}
\renewcommand{\Im}{\operatorname{Im}\nolimits}
\newcommand{\Ker}{\operatorname{Ker}\nolimits}
\newcommand{\coker}{\operatorname{coker}\nolimits}
\newcommand{\soc}{\operatorname{Soc}\nolimits}
\newcommand{\rad}{\operatorname{rad}\nolimits}
\newcommand{\Top}{\operatorname{Top}\nolimits}
\newcommand{\Gr}{\operatorname{\mathbf{Gr}}\nolimits}
\newcommand{\Ext}{\operatorname{Ext}\nolimits}
\renewcommand{\dim}{\operatorname{dim}\nolimits}
\newcommand{\val}{\operatorname{val}\nolimits}
\newcommand{\ord}{\operatorname{ord}}
\newcommand{\Id}{\operatorname{Id}\nolimits}
\newcommand{\set}[1]{\left\{ #1\right\}}
\newcommand{\Ag}{\cA_{\Gamma}}
\newcommand{\Agw}{\cA_{\Gamma_W}}
\newcommand{\st}{{\operatorname{\bf{str}}\nolimits}}
\newcommand{\mfo}{\mathfrak o}
\newcommand{\mo}{\mathfrak o}
\newcommand{\mm}{\mathfrak m}
\newcommand{\mq}{\mathfrak q}
\newcommand{\Qg}{{\mathcal Q}_{\Gamma}}
\newcommand{\Ig}{I_{\Gamma}}
\begin{document}

\topmargin 0cm
\oddsidemargin 0.5cm
\evensidemargin 0.5cm
\baselineskip=16pt

\title[The Ext algebra of a Brauer graph algebra]
{The Ext algebra of a Brauer graph algebra}

\author[Green]{Edward L.\ Green}
\address{Edward L.\ Green, Department of
Mathematics\\ Virginia Tech\\ Blacksburg, VA 24061\\
USA}
\email{green@math.vt.edu}
\author[Schroll]{Sibylle Schroll}
\address{Sibylle Schroll\\
Department of Mathematics \\
University of Leicester \\
University Road \\
Leicester LE1 7RH \\
United Kingdom}
\email{ss489@leicester.ac.uk }
\author[Snashall]{Nicole Snashall$^\ast$}\thanks{$^\ast$ Corresponding author}
\address{Nicole Snashall\\
Department of Mathematics \\
University of Leicester \\
University Road \\
Leicester LE1 7RH \\
United Kingdom}
\email{njs5@leicester.ac.uk}
\author[Taillefer]{Rachel Taillefer}
\address{Rachel Taillefer\\ Clermont Universit\'e, Universit\'e Blaise Pascal, Laboratoire de Math\'ematiques\\ BP 10448, F-63000 Clermont-Ferrand -
CNRS, UMR 6620, Laboratoire de Math\'ematiques, F-63177 Aubi\`ere\\
France}
\email{Rachel.Taillefer@math.univ-bpclermont.fr}

\thanks{This work was supported by the Leverhulme Trust in the form of an Early Career
Fellowship for the second author. The third author wishes to thank the University of Leicester for granting study leave. The third and fourth authors would like to thank the Universit\'e
Blaise Pascal (Clermont-Ferrand 2) for a month of invited professorship,
during which part of this collaboration took place.}

\subjclass[2010]{16G20, 
16S37, 
16E05, 
16E30. 
}
\keywords{Brauer graph algebra, Koszul, $d$-Koszul, Ext algebra, finite generation.}

\begin{abstract}
In this paper we study finite generation of the Ext algebra of a Brauer graph algebra by determining the degrees of the generators. As a consequence we characterize the Brauer graph algebras that are Koszul and those that are ${\mathcal K}_2$.
\end{abstract}

\maketitle

\section*{Introduction}

This paper studies finite generation, and the corresponding degrees of the generators, of the Ext algebra of a Brauer graph algebra. We show that if the Brauer graph has no truncated edges then the Ext algebra of the associated Brauer graph algebra is finitely generated in degrees 0, 1 and 2. As a result we characterize those Brauer graph algebras that are ${\mathcal K}_2$ in the sense of Cassidy and Shelton \cite{CS}. Moreover, we determine the Koszul and the $d$-Koszul Brauer graph algebras.

Let $K$ be a field, ${\mathcal Q}$ a finite quiver and $I$ an admissible ideal of $K{\mathcal Q}$. Assume that the algebra $\Lambda = K{\mathcal Q}/I$  is finite dimensional and indecomposable.
Koszul algebras play an important role in representation theory, and it is well-known that if $\Lambda = K{\mathcal Q}/I$ is a Koszul algebra then the Ext algebra $E(\Lambda)$ is finitely generated in degrees 0 and 1. Moreover the ideal $I$ is quadratic, that is, $I$ is generated by homogeneous elements of length 2. This work was motivated by the study of the Koszul Brauer graph algebras, and we determine these algebras in Theorem~\ref{thm:charac_Koszul}, before investigating several generalizations of this concept among Brauer graph algebras.

One such generalization is the class of $d$-Koszul algebras,  where $d \geqslant 2$; these algebras were introduced by Berger in \cite{Ber}, in order to include some cubic Artin-Schelter regular algebras and anti-symmetrizer algebras. Berger, Dubois-Violette and Wambst \cite{BDVW}, and Green, Marcos, Mart\'\i nez-Villa and Zhang \cite{GMMVZ} both continued the study of $d$-Koszul algebras, extending known properties of Koszul algebras. It has since been shown that  some quantum groups and Yang-Mills algebras are included in this framework.

A graded algebra $\Lambda = K{\mathcal Q}/I$ is a $d$-Koszul algebra if the
$n$-th projective module in a minimal graded projective $\Lambda$-resolution of
$\Lambda_0$ (the $K$-space spanned by the vertices of ${\mathcal Q}$) can be generated in degree $\delta(n)$, where the map $\delta:
{\mathbb N} \to {\mathbb N}$ is defined by
\[\delta(n) = \begin{cases}
\frac{n}{2}d & \text{if $n$ is even}\\
\frac{n-1}{2}d + 1 & \text{if $n$ is odd.}
\end{cases}\]
In particular, the ideal $I$ of a $d$-Koszul algebra $K{\mathcal Q}/I$
is generated by homogeneous elements of length $d$. Note that if $d=2$, we
recover the usual (quadratic) Koszul algebras.  The $d$-Koszul Brauer graph algebras are fully determined in Theorem~\ref{thm:charac_Koszul}.

It was shown in \cite{GMMVZ}
that the Ext algebra of a $d$-Koszul algebra is finitely generated in degrees 0,
1 and 2; thus $d$-Koszul algebras are $\cK_2$ algebras in the sense of Cassidy and Shelton \cite{CS}.
They define a graded algebra $A$ to be $\cK_2$ if its Ext algebra $E(A)$ is generated as an algebra in degrees 0, 1 and 2. In Theorem~\ref{thm:fg-ext}, we give a sufficient condition for the Ext algebra of a Brauer graph algebra to be finitely generated in degrees $0, 1$ and $2$. We then characterize the ${\mathcal K}_2$ Brauer graph algebras in Theorem~\ref{thm:notK2}.

Many other generalizations of Koszul algebras have been introduced more recently, such as almost Koszul algebras \cite{BBK}, piecewise Koszul algebras \cite{LHL}, $(p,\lambda)$-Koszul algebras \cite{LZ}, and multi-Koszul algebras \cite{HR}. In this paper, we study a generalization of the Koszul property by Green and Marcos \cite{GM}, where they considered algebras $K{\mathcal Q}/I$ in which the ideal $I$ is generated by homogeneous elements of more than one length. The algebra $K{\mathcal Q}/I$ is 2-$d$-homogeneous if $I$ can be generated by homogeneous elements of lengths 2 and $d$. Green and Marcos then introduced 2-$d$-determined and 2-$d$-Koszul algebras in \cite{GM}. We discuss these algebras in Section~\ref{sec:truncated_edge}, but note here simply that a 2-$d$-Koszul algebra is 2-$d$-determined, which in turn is 2-$d$-homogeneous; moreover a 2-$d$-determined algebra is 2-$d$-Koszul if its Ext algebra is finitely generated. Having determined the 2-$d$-homogeneous Brauer graph algebras in Section~\ref{sec:Koszul}, we return to these algebras in Section~\ref{sec:truncated_edge}, where we give a positive answer for Brauer graph algebras to all three questions posed in \cite{GM}.
As a consequence we are also able to give new classes of 2-$d$-Koszul algebras, which was one of our motivations for this paper. In particular, the second question in \cite{GM} asks whether it is the case that the Ext algebra of a 2-$d$-Koszul algebra of infinite global dimension is necessarily generated in degrees 0, 1 and 2. This is indeed the case for Brauer graph algebras. Moreover, for a 2-$d$-homogeneous Brauer graph algebra $\Ag$ with Brauer graph $\Gamma$, we show in Theorem~\ref{thm:2-d-Koszul}, that the following four conditions are equivalent: (1)~$\Gamma$ has no truncated edges, (2)~$\Ag$ is 2-$d$-determined, (3)~$\Ag$ is 2-$d$-Koszul, and (4)~the Ext algebra of $\Ag$ is generated in degrees 0, 1 and 2 (that is, $\Ag$ is ${\mathcal K}_2$). It should be noted that these properties are not, in general, equivalent, as is demonstrated by Cassidy and Phan in \cite{CP}.

This paper extends the work of Antipov and Generalov, who showed in \cite{AG} that the Ext algebra of a symmetric Brauer graph algebra is finitely generated. We remark that \cite{AG} used different methods, and gave no details of the degrees of the generators.

The body of the paper proves the necessary structural results to describe the uniserial modules, string modules, syzygies and projective resolutions of the simple modules. The paper uses the covering theory for Brauer graphs and Brauer graph algebras developed in \cite{GSS}. We see at the start of Section~\ref{sec:coverings}, that \cite{GSS} enables us to reduce to the case where the Brauer graph has no loops or multiple edges and where the multiplicity function is identically one; this vastly reduces the computations required to determine the Ext algebra. The definition of a Brauer graph algebra also requires a \textit{quantizing function} $\mq$ in order to define some of the relations (see Section~\ref{sec:notation}). If the field is algebraically closed and if either the Brauer graph is a tree or the Brauer graph algebra is symmetric, then we may choose $\mq\equiv 1$. At the end of Section \ref{sec:coverings}, we discuss the quantizing function $\mq$ when the Brauer graph algebra is graded (but not necessarily symmetric or a Brauer tree algebra), and we show that we can assume, given some mild assumptions on the field $K$, that $\mq\equiv 1$ in the study of the Ext algebra.

We would like to thank the referee for their helpful comments.

\section{Background and Notation}\label{sec:notation}

We now introduce Brauer graph algebras, giving the definitions and notation which we use throughout the paper.

Let $\Gamma$ be a finite connected graph with at least one edge.  We denote
by $\Gamma_0$ the set of vertices of $\Gamma$ and by $\Gamma_1$ the
set of edges of $\Gamma$. We equip $\Gamma$ with a {\em multiplicity
function} $\mm \colon \Gamma_0 \rightarrow \mathbb{N} \setminus \{0\}$
and, for each vertex in $\Gamma$, we fix a cyclic ordering $\mathfrak{o}$ of the edges incident with this vertex.
We call the triple $(\Gamma, \mathfrak{o}, \mm)$ a {\em Brauer graph}. We may denote a Brauer graph by $\Gamma$,
where the choice of cyclic ordering and multiplicity function are suppressed.
In all examples a planar embedding of $\Gamma$ is given and we
choose the cyclic ordering to be the clockwise ordering of the edges
around each vertex.

For each $\alpha \in \Gamma_0$, let $\val(\alpha)$
denote the {\em valency} of $\alpha$, that is, the number of edges
incident with $\alpha$ where we count each loop as two edges.
We let $\Gamma_1^{(\alpha)}$ denote the
set of edges of $\Gamma$ which are incident with the vertex $\alpha$.
Let ${\mathcal D}_\mm = \{\alpha \in \Gamma_0 \mid \mm(\alpha)\val(\alpha) = 1\}$.
If $s$ is an edge in $\Gamma$ such that $s \in \Gamma_1^{(\alpha)}$ for some $\alpha \in {\mathcal D}_\mm$ then we
say that $s$ is a {\em truncated edge} at the vertex $\alpha$.
Thus an edge $s$ is truncated at $\alpha$ if and only if $\mm(\alpha) = 1$ and $\val(\alpha) = 1$.
If $s \in \Gamma_1^{(\alpha)}$ for some $\alpha \in \Gamma_0 \setminus {\mathcal D}_\mm$ then
we say that an edge $t$ in $\Gamma$ is the {\em immediate successor} of the edge
$s$ at the vertex $\alpha$ if $t \in \Gamma_1^{(\alpha)}$ and
$t$ directly follows $s$ in the cyclic ordering around $\alpha$.
We denote the immediate successor of $s$ at the vertex $\alpha$ by $s^>$.
In particular, if $s \in \Gamma_1^{(\alpha)}$ where $\val(\alpha) = 1$ and $\mm(\alpha) >1$,
then $s$ is its own immediate successor so $s^> = s$.
An edge $s$ which is a truncated edge at the vertex $\alpha$ has no
immediate successor at the vertex $\alpha$. For ease of notation, from now on we will simply write {\em successor} instead of immediate successor.
We remark that truncated edges are of fundamental importance in determining
the behaviour of the Ext algebra of a Brauer graph algebra.

Following \cite{B} and \cite{K}, we let $K$ be a field and
introduce the Brauer graph algebra of a Brauer graph $\Gamma$.
We associate to $\Gamma$ a quiver $\Qg$ and a
set of relations $\rho_\Gamma$ in the path algebra $K\Qg$, which we call
the {\em Brauer graph relations}, defined below.
Let $\Ig$ be the ideal of $K\Qg$ which is generated by the set $\rho_\Gamma$.
We define the {\em Brauer graph algebra} $\Ag$ of $\Gamma$ to be the quotient $\Ag = K\Qg/\Ig$.
We keep the notation of \cite{GSS} throughout this paper, and now define $\Qg$ and $\rho_\Gamma$.

If the Brauer graph $\Gamma$ is
$\xymatrix{
\alpha \ar@{-}[r] & \beta
}$
with $\mm(\alpha) = \mm(\beta) = 1$ then
$\Qg$ is
$\xymatrix{
\cdot \ar@(dr,ur)[]_x
}$
and $\rho_\Gamma = \{x^2\}$ so
the Brauer graph algebra is $K[x]/(x^2)$.

In the general case, for a Brauer graph $(\Gamma, \mathfrak{o}, \mm)$, which is distinct from
$\xymatrix{
\alpha \ar@{-}[r] & \beta
}$
with $\mm(\alpha) = \mm(\beta) = 1$,
we define the associated quiver $\Qg$  by
\begin{equation*}
\begin{aligned}
(\Qg)_0 & = \{v_s \mid s \in \Gamma_1\}\\
(\Qg)_1 & = \bigcup_{\alpha \in \Gamma_0 \setminus {\mathcal D}_\mm}
\left \{v_s \to v_{s^{>}} \mid s \in \Gamma_1^{(\alpha)}\right\}.
\end{aligned}
\end{equation*}

Now let $\alpha \in \Gamma_0 \setminus {\mathcal D}_\mm$ and let $s \in \Gamma_1^{(\alpha)}$.
In the case where $\val(\alpha) = 1$ and $\mm(\alpha) >1$, we define the successor sequence
of $s$ at the vertex $\alpha$ as the sequence of one element $s = s_0$.
When $\val(\alpha) > 1$, we define the {\em successor sequence of $s$ at $\alpha$} as the
sequence $s_0, s_1, \ldots , s_{\val(\alpha)-1}$ which has the following properties:
\begin{enumerate}
\item[$\bullet$] $s = s_0$
\item[$\bullet$] $s_i = s_{i-1}^>$ \mbox{ for $i = 1, \dots , \val(\alpha)-1$}.
\end{enumerate}
Note that the loops incident with $\alpha$ are listed twice and the other edges precisely once.
Observe also that if $s=s_0,s_1,s_2,\dots,s_{n-1}$ is the successor sequence for $s$ at vertex
$\alpha$, then $s_1,s_2,\dots,s_{n-1},s_0$ is the successor sequence
for $s_1$ at $\alpha$. We set
$s_{\val(\alpha)}=s$, noting that $s$ is the successor of
$s_{\val(\alpha)-1}$ in this case.

In case $\Gamma$ has at least one loop, care must be taken.  In such
circumstances, for each vertex $\alpha$, we choose a distinguished edge,
$s_{\alpha}$, incident with $\alpha$.  If $\ell$ is a loop at $\alpha$,
$\ell$ occurs twice in the successor sequence of $s_{\alpha}$.  We
distinguish the first and second occurrences of $\ell$ in this
sequence and view the two occurrences as two edges in $\Gamma_1$.
Thus, $\Gamma_1$ is the set of all edges with the proviso that loops
are listed twice and have different successors. For example, suppose that the
Brauer graph is
$\Gamma=\xymatrix{\alpha\ar@{-}@(ul,dl)_{s}\ar@{-}@(u,l)_{t}\ar@{-}^{u}[r] & \beta}$
with $\mm(\beta) = 1$ so that $u$ is truncated at $\beta$. Then both $s$ and $t$ occur
twice in the successor sequence of $u$ at $\alpha$; the successor sequence of $u$
at $\alpha$ is $u, s, t, s, t$, and there is no successor sequence of $u$ at $\beta$.
In this case the quiver ${\mathcal Q}_{\Gamma}$ is
$\xymatrix{v_s\ar@/^1pc/[rr]^{}\ar@/^.5pc/[rr]^{} & & v_t\ar@/^.5pc/[ll]^{}\ar@/^.5pc/[dl]\\
& v_u\ar@/^.5pc/[ul] & }$.
Note that there are two arrows in ${\mathcal Q}_{\Gamma}$ from $v_s$ to $v_t$.

In order to define the Brauer graph relations $\rho_\Gamma$ we need
a {\em quantizing function} $\mq$. Let $\cX_\Gamma $ be the set of pairs $ (s,
\alpha)$ such that $\alpha \in \Gamma_0$, $s
\in \Gamma_1^{(\alpha)} $ and $s$ is
not truncated at either of its endpoints, and let $\mq\colon
\cX_\Gamma \to K\setminus\{0\}$ be a set function. We denote $\mq((s,
\alpha))$ by $\mq_{s, \alpha}$. With this additional data we call $(\Gamma,
\mo, \mm, \mq)$ a {\em quantized Brauer graph}.
We remark that if the Brauer graph $\Gamma$ is $\xymatrix{ \alpha \ar@{-}[r] & \beta }$
then $\cX_{\Gamma} = \emptyset$.
If the field is algebraically closed and if either the Brauer graph is a tree or the Brauer graph algebra is
symmetric, then $\mq\equiv 1$ (see \cite{B}).

There are three
types of relations for $(\Gamma, \mo, \mm, \mq)$. Note that we
write our paths from left to right.
For $\alpha \in \Gamma_0 \setminus {\mathcal D}_\mm$ and $s \in \Gamma_1^{(\alpha)}$,
let $s = s_0, s_1, \ldots , s_{\val(\alpha)-1}$ be the successor sequence
of $s$ at $\alpha$ and let $C_{s, \alpha}$ be the cycle in $\Qg$ given by
$$C_{s, \alpha} = a_0a_1 \cdots a_{\val(\alpha)-1}$$
where $a_i$ is the arrow of $\Qg$ given by
$\xymatrix{
v_{s_i}\ar[r]^{a_i} & v_{s_{i+1}}
}$
for $i = 0, \dots , \val(\alpha)-1$.

\textit{Relations of type one.}
Let $\alpha, \beta \in \Gamma_0\setminus {\mathcal D}_\mm$ and
let $s \in \Gamma_1^{(\alpha)}\cap\Gamma_1^{(\beta)}$.
Thus $s$ is not truncated at either $\alpha$ or $\beta$. Then $\rho_\Gamma$ contains
either $\mq_{s, \alpha} C_{s, \alpha}^{\mm(\alpha)} - \mq_{s, \beta} C_{s, \beta}^{\mm(\beta)}$
or $\mq_{s, \beta} C_{s, \beta}^{\mm(\beta)} - \mq_{s, \alpha} C_{s, \alpha}^{\mm(\alpha)}$.
We call this a type one relation. Note that since one of these relations is the negative
of the other, the ideal $\Ig$ does not depend on this choice.

\textit{Relations of type two.} Let $\alpha \in {\mathcal D}_\mm,
\beta \in \Gamma_0\setminus {\mathcal D}_\mm$, and
let $s \in \Gamma_1^{(\alpha)}\cap\Gamma_1^{(\beta)}$.
Thus $s$ is truncated at $\alpha$ but not at $\beta$.
Then $\rho_\Gamma$ contains $C_{s, \beta}^{\mm(\beta)}b_0$ where
$C_{s, \beta} = b_0b_1 \cdots b_{\val(\beta)-1}$.

\textit{Relations of type three.} These relations are
quadratic monomial relations of the form $ab$ in $K\Qg$ where $ab$
is not a subpath of any $C_{s, \alpha}^{\mm(\alpha)}$.

We note that it is well-known that a Brauer graph algebra is special biserial and weakly symmetric.

\bigskip

Throughout this paper, all modules are right modules. We denote the Jacobson radical
$\br_{\Ag}$ of the Brauer graph algebra $\Ag$ by $\br$ when no confusion can arise.
We will use lower case letters such as $s$ and $t$ to denote edges in
$\Gamma$, capital letters $S$ and $T$ to denote
the corresponding simple $\Ag$-modules, and $v_s$ and $v_t$ to denote
the corresponding vertices in $\cQ_{\Gamma}$.
If $S$ is a simple $\cA_{\Gamma}$-module, then we denote the projective $\Ag$-cover of
$S$ by $\pi_S\colon P_S\to S$.

We recall the structure of the indecomposable projective modules from \cite[Section 4.18]{B}. Let $P$ be an indecomposable projective $\Ag$-module corresponding to the vertex $v_s$ in $\cQ_{\Gamma}$ and edge $s$ in $\Gamma$. If the edge $s$ is not truncated, then $P$ has both top and socle isomorphic to $S$, and $\rad P/\soc P$ is a direct sum of two uniserial modules. Let the vertices of $s$ be $\alpha$ and $\beta$ and let $s, s_1, \dots, s_{\val(\alpha)-1}$ be the successor sequence for $s$ at $\alpha$, and $s, t_1, \dots, t_{\val(\beta)-1}$ be the successor sequence for $s$ at $\beta$. Then $\rad P/\soc P \cong U \oplus V$, where $U$ and $V$ have composition series
$$S_1, \dots, S_{\val(\alpha)-1}, S, S_1, \dots, S_{\val(\alpha)-1}, \dots , S, S_1, \dots, S_{\val(\alpha)-1}$$
and
$$T_1, \dots, T_{\val(\beta)-1}, S, T_1, \dots, T_{\val(\beta)-1}, \dots , S, T_1, \dots, T_{\val(\beta)-1}$$
respectively, such that, for $i = 1, \dots, \val(\alpha)-1$, the simple module $S_i$ occurs precisely $\mm(\alpha)$ times and is associated to the edge $s_i$ in $\Gamma$, and, for $j = 1, \dots, \val(\beta)-1$, the simple module $T_j$ occurs precisely $\mm(\beta)$ times and is associated to the edge $t_j$ in $\Gamma$.

In the case where $s$ is truncated, then $P$ is itself uniserial. Suppose that $s$ is not truncated at vertex $\alpha$ and let $s, s_1, \dots, s_{\val(\alpha)-1}$ be the successor sequence for $s$ at  $\alpha$. Then $P$ has composition series $$S, S_1, \dots, S_{\val(\alpha)-1}, S, S_1, \dots, S_{\val(\alpha)-1}, \dots , S, S_1, \dots, S_{\val(\alpha)-1}, S$$ where, for $i = 1, \dots, \val(\alpha)-1$, the simple module $S_i$ occurs precisely $\mm(\alpha)$ times and is associated to the edge $s_i$ in $\Gamma$.

\section{$d$-homogeneous and 2-$d$-homogeneous Brauer graph algebras}\label{sec:Koszul}

Suppose that $\Lambda=K\cQ/I$ where $I$ is a homogeneous ideal with respect to
the length grading. Let $\rho$ be a minimal set of generators for $I$; the
elements in $\rho$ are necessarily homogeneous.
Let $d\pgq2$ and $d'\pgq2$ be distinct integers. We say that $\Lambda$ is \emph{$d$-homogeneous}
(or \emph{quadratic} when $d=2$) if $\rho$ contains homogeneous elements of
length $d$ only. We say that $\Lambda$ is {\em $d$-$d'$-homogeneous} if $\Lambda$ is not $d$-homogeneous or
$d'$-homogeneous and $\rho$ consists of  homogeneous elements of length $d$ or
$d'$. Note that this does not depend on the choice of minimal generating set
$\rho$ for $I.$

In this section we investigate the $d$-homogeneous and $2$-$d$-homogeneous
Brauer graph algebras. Therefore we need to know a minimal generating set for
$\Ig.$ Recall that, for an integer $n\pgq1$, $\aa_n$ is the graph
$\xymatrix{\cdot\ar@{-}[r]&\cdot\ar@{-}[r]&\cdot\cdots\cdot\ar@{-}[r]&\cdot}$
with $n$ vertices and $\widetilde{\aa}_n$ is the circular graph with $n+1$
vertices.

\begin{lemma}\label{lemma:minimal generators BGA}
Let $(\Gamma, \mo, \mm, \mq)$ be a quantized Brauer graph such that $\Gamma$ is
not $\aa_2,$ and let $\Ag=K\Qg/\Ig$
be the associated Brauer graph algebra. Let $\rho\subset \rho_\Gamma$ be a
minimal generating set for $\Ig.$ Then $\rho$ contains all the relations of types
one and three, and it contains the relation of type two associated to the edge
$s$ truncated at $\alpha$ if and only if the successor of $s$ at its other
endpoint $\beta$ is also truncated.
\end{lemma}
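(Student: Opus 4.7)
The plan is to work with the quotient $\Ig/(J\Ig+\Ig J)$: a subset $\rho\subset\rho_\Gamma$ generates $\Ig$ minimally if and only if the images of its elements form a $K$-basis of this quotient. Since every element of $\rho_\Gamma$ lies in $J^2$, one has $J\Ig+\Ig J\subseteq J^3$, so any length-$2$ element of $\Ig$ represents a nonzero class in the quotient; in particular the type three relations, being linearly independent length-$2$ monomials in $\Ig$, are all essential.

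For a type one relation $r_s=\mq_{s,\alpha}C_{s,\alpha}^{\mm(\alpha)}-\mq_{s,\beta}C_{s,\beta}^{\mm(\beta)}$, I would track the monomial $C_{s,\alpha}^{\mm(\alpha)}$ in any putative expression of $r_s$ inside $J\Ig+\Ig J$. Every arrow in this path corresponds to a succession at $\alpha$, and $C_{s,\alpha}^{\mm(\alpha)}$ is the unique path starting at $v_s$ of its length with this property. A case analysis on the type of generator contributing this monomial then shows that no generator multiplied by a path of length $\geqslant 1$ can yield $C_{s,\alpha}^{\mm(\alpha)}$: cycle terms coming from other type one relations have the wrong starting vertex or the wrong vertex labelling, type two terms have one arrow too many, and length-$2$ type three terms that would be consecutive inside $C_{s,\alpha}^{\mm(\alpha)}$ are not in $\rho_\Gamma$ by definition. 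Hence $r_s\notin J\Ig+\Ig J$ and $r_s$ is essential.

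For a type two relation $r=C_{s,\beta}^{\mm(\beta)}b_0$ with $s$ truncated at $\alpha$ and $t$ the successor of $s$ at $\beta$ with other endpoint $\gamma$, the cyclic identity $C_{s,\beta}^{\mm(\beta)}b_0=b_0\cdot C_{t,\beta}^{\mm(\beta)}$ is the key. If $t$ is not truncated, the type one relation $r_t=\mq_{t,\beta}C_{t,\beta}^{\mm(\beta)}-\mq_{t,\gamma}C_{t,\gamma}^{\mm(\gamma)}$ exists; writing $C_{t,\gamma}=c_0c_1\cdots c_{\val(\gamma)-1}$, the length-$2$ path $b_0c_0$ is a type three relation, as its two arrows correspond to successions at the distinct vertices $\beta$ and $\gamma$. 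Then
\[
\mq_{t,\beta}\,r=b_0\cdot r_t+\mq_{t,\gamma}\,(b_0c_0)\cdot c_1\cdots c_{\val(\gamma)\mm(\gamma)-1},
\]
and since $\val(\gamma)\mm(\gamma)\geqslant 2$, both summands lie in $J\Ig+\Ig J$, so $r$ is redundant. Conversely, if $t$ is truncated (necessarily at $\gamma$, because $\Gamma\neq\aa_2$ and $s$ is already truncated at $\alpha$), I would show no expression $\sum p_ix_i+\sum y_jq_j$ in $J\Ig+\Ig J$ can have $C_{s,\beta}^{\mm(\beta)}b_0$ as a term: matching vertex-labels and lengths along this specific path reduces the possible contributions to cycle terms $C_{t,\beta}^{\mm(\beta)}$ coming from a type one relation for $t$ or cycle terms $C_{s,\beta}^{\mm(\beta)}$ coming from a type one relation for $s$, but neither such relation exists once $s$ and $t$ are both truncated. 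Hence $r$ is essential.

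The main obstacle is the final case analysis ruling out contributions to $C_{s,\beta}^{\mm(\beta)}b_0$ when $t$ is truncated, and in particular keeping the bookkeeping straight in the presence of loops, where a loop $\ell$ contributes two adjacent arrows in the cyclic ordering at its vertex and so might seem to yield an extra adjacency; the key structural fact making the enumeration manageable is that each arrow of $\Qg$ is uniquely labelled by a succession at a specific vertex, so the vertex-labels along a candidate path force which generator is being multiplied and by which path. Combining this essentiality check with the explicit reduction in the non-truncated case and the earlier arguments for types one and three yields the minimal generating set described in the lemma.
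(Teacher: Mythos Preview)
Your proposal is correct and follows essentially the same approach as the paper. The explicit reduction of the type two relation when the successor is not truncated is identical to the paper's, and your monomial-tracking argument in the truncated case matches the paper's case analysis. The main difference is that you organise everything through the quotient $\Ig/(J\Ig+\Ig J)$ and actually justify why the type one and type three relations are essential, whereas the paper simply declares this ``clear'' and only argues the type two case in detail; your framework makes the linear-independence bookkeeping cleaner but does not change the substance of the argument.
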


\begin{proof} It is clear that relations of type one and three must be in
$\rho$, so we must prove the condition on relations of type two. Since
$\Gamma\neq \aa_2$ and $s$ is truncated at $\alpha,$ the
edge $s$ has a successor $s_1$ distinct from $s$  at $\beta.$ Let
$s=s_0,s_1,\ldots,s_{\val(\beta)-1}$ be the successor sequence of $s$ at $\beta$,
and let $R_2$ be the relation of type two associated to $s$, so that $R_2=C_{s,\beta}^{\mm(\beta)}b_0$
where $C_{s,\beta}=b_0b_1\cdots b_{\val(\beta)-1}$.

First assume that $s_1$ is not truncated at its other endpoint $\gamma$. We want
to prove that the relation $R_2$ is not in $\rho$.
Since $s_1$ is not truncated at either of its endpoints, there is a relation of
type one associated to $s_1$, of the form
$R_1:=\mq_{s_1,\beta}C_{s_1,\beta}^{\mm(\beta)}-\mq_{s_1,\gamma}C_{s_1,\gamma}^{\mm(\gamma)}\in\rho$.
We have $C_{s_1,\beta}=b_1\cdots b_{\val(\beta)-1}b_0$, and we let $C_{s_1,\gamma}=a_0a_1\cdots a_{\val(\gamma)-1}$.
Therefore
$R_2 = C_{s,\beta}^{\mm(\beta)}b_0 = b_0C_{s_1,\beta}^{\mm(\beta)} = \mq_{s_1,\beta}^{-1}b_0R_1 + \mq_{s_1,\beta}^{-1}\mq_{s_1,\gamma}R_3
a_1\cdots a_{\val(\gamma)-1}C_{s_1,\gamma}^{\mm(\gamma)-1}$, where $R_3:=b_0a_0\in\rho$
is a relation of type three. Therefore $R_2\not\in \rho$.

Conversely, assume that $s_1$ is truncated at $\gamma.$ Suppose that
$R_2\not\in\rho$, so that we can write
$R_2=\sum_{i=1}^p\lambda_iR_1^{(i)}\mu_i+\sum_{j=1}^q\lambda_j'R_2^{(j)}\mu_j'+\sum_{k=1}^r\lambda_k''R_3^{(k)}\mu_k''$
for some $\lambda_i,\lambda_j',\lambda_k'',\mu_i,\mu_j',\mu_k''$ in $K\Qg$ and
relations $R_1^{(i)}, R_2^{(j)}, R_3^{(k)}$ of type one, two and three in
$\rho$.
We work in $K\Qg$, which is graded by length.

The relation $R_2$ is monomial, hence must occur in one of the summands. By
definition, the $R_3^{(k)}$ are not subpaths of $R_2$ (since the proper subpaths
of $R_2$ are all subpaths of some $C_{s_i,\beta}$ and $R_2$ has length at least
$3$). Moreover, if $\lambda_j'R_2^{(j)}\mu_j'=R_2$, then $R_2^{(j)}$ is a
product of (some of) the arrows $b_\ell$ for $0\ppq\ell\ppq\val(\beta)-1$, so
that $R_2^{(j)}$ must be a cyclic permutation of $R_2$ and hence equal to $R_2$,
a contradiction. Finally, if $R_2$ is in the term $\lambda_iR_1^{(i)}\mu_i$,
then $R_1^{(i)}$ is a $K$-linear combination of $C^{\mm(\beta)}_{s_t,\beta}$ for some $t$, and another
cycle that does not contain a $b_\ell$,  and, for length reasons, we must have
$t=0$ or $t=1$. But $s_0=s$ and $s_1$ are truncated at $\alpha$ and $\gamma$
respectively, so there are no relations of type one associated to
these edges. We have again a contradiction, and therefore conclude that $R_2$ is in $\rho.$
\end{proof}

We start by describing all $d$-homogeneous Brauer graph algebras for $d\pgq2$.

\begin{prop}\label{prop:quadratic}
\sloppy Let $(\Gamma,\mo,\mm,\mq)$ be a quantized Brauer graph and let $\Ag$
be the  associated Brauer graph
algebra. Then $\Ag$ is quadratic if and only if $(\Gamma,\mo,\mm)$ is one of the following Brauer graphs.
\begin{enumerate}[\bf (1)]
\item $\Gamma=\aa_2$ with $\mm\equiv 1$ and $\mq\equiv 1.$
\item $\Gamma=\aa_n$ with $n\pgq 2$, all multiplicities equal to $1$ except at the
  first and last vertices which are equal to $2$.
\item $\Gamma=\aa_n$ with $n\pgq 3$, all multiplicities equal to $1$ except at
  one end vertex which is equal to $2$.
\item $\Gamma=\aa_n$ with $n\pgq 4$,  $\mm\equiv1$ and $\mq\equiv 1.$
\item $\Gamma=\widetilde{\aa}_n$ with $n\pgq 1$ and $\mm \equiv 1.$
\item $\Gamma=\xymatrix{\cdot\ar@{-}@(ur,dr)^{}}$ with $\mm \equiv 1$.
\end{enumerate}
\end{prop}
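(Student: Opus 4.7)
The plan is to use Lemma~\ref{lemma:minimal generators BGA} to describe a minimal generating set $\rho$ of $\Ig$ explicitly, and then characterize when every element of $\rho$ has length $2$. I first dispose of $\Gamma=\aa_2$ by a direct computation: the only multiplicity choice yielding a quadratic algebra is $\mm\equiv 1$ (case (1)); all other choices produce a single relation of length $\pgq 3$. For $\Gamma\neq\aa_2$, the lemma gives that $\rho$ consists of all type one and type three relations, together with the type two relation at a truncated edge $s$ (at $\alpha$, other endpoint $\beta$) precisely when the successor of $s$ at $\beta$ is itself truncated. Type three relations are quadratic by definition; a type one relation at $(s,\alpha)\in\cX_\Gamma$ has length $\val(\alpha)\mm(\alpha)$; a type two relation has length $\val(\beta)\mm(\beta)+1\pgq 3$, since $\beta$ is not truncated in the non-$\aa_2$ setting.

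Thus, for $\Gamma\neq\aa_2$, $\Ag$ is quadratic if and only if (A) $\val(\alpha)\mm(\alpha)=2$ for all $(s,\alpha)\in\cX_\Gamma$, and (B) no type two relation lies in $\rho$; equivalently, the successor at $\beta$ of any truncated edge $s$ (with other endpoint $\beta$) is not truncated. The key structural step is to deduce from (A) and (B) that \emph{every} non-truncated vertex $\alpha$ satisfies $\val(\alpha)\mm(\alpha)=2$. Condition (A) gives this directly for any $\alpha$ meeting an edge that is non-truncated at both ends; if instead every edge at $\alpha$ is truncated, then each such edge $s$ has $\alpha$ as its non-truncated endpoint, so (B) applied to $s$ forces the successor of $s$ at $\alpha$ to be non-truncated, contradicting the assumption (this also covers the degenerate case $\val(\alpha)=1$, since then $s$ is its own successor at $\alpha$). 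Hence every vertex of $\Gamma$ is either truncated (so $\val=\mm=1$) or satisfies $(\val,\mm)\in\{(1,2),(2,1)\}$.

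The classification now falls out. If $\Gamma$ has no truncated vertex, every vertex has degree $1$ or $2$ (loops counted twice), so $\Gamma$ is a path (leaves $\mm=2$, internal vertices $\mm=1$), a cycle ($\mm\equiv 1$), or a single loop, giving cases (2), (5), (6). If $\alpha_0$ is a truncated vertex with its unique edge $s_0$ meeting $\beta_0$, then (B) forces $\val(\beta_0)\pgq 2$, hence $\val(\beta_0)=2$ and $\mm(\beta_0)=1$; iterating along the successor edge at each step, $\Gamma$ is a path whose other end is either a $\mm=2$ leaf (case (3); $n\pgq 3$ to avoid $\aa_2$) or another truncated vertex (case (4); $n\pgq 4$, since in $\aa_3$ with $\mm\equiv 1$ the successor at the internal vertex of each truncated edge is the other truncated edge, violating (B)). The converse directions for cases (1)--(6) follow from short checks of (A) and (B) via Lemma~\ref{lemma:minimal generators BGA}. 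The main subtlety in the whole argument is the global upgrade of condition (A): (A) alone constrains only vertices incident with a truly non-truncated edge, and (B) must be invoked to rule out vertices meeting only truncated edges.
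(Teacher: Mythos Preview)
Your proof is correct. The approach is essentially the same as the paper's—both hinge on Lemma~\ref{lemma:minimal generators BGA} and the constraint that quadratic forces $\val(\alpha)\mm(\alpha)=2$ at every non-truncated vertex—but the organization differs in a way worth noting.

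The paper proceeds by direct case analysis: it first observes that $\val(\alpha)\leqslant 2$ everywhere (else a type one or type two relation of length $\geqslant 3$ survives), then splits on whether some vertex has valency $1$, and within that case walks along $\Gamma$ edge by edge, invoking Lemma~\ref{lemma:minimal generators BGA} locally at each step to propagate the constraint $\val\cdot\mm=2$. You instead extract the two global conditions (A) and (B) up front and then prove the ``global upgrade'' (every non-truncated vertex has $\val\cdot\mm=2$) in one stroke via the neat dichotomy on whether $\alpha$ meets an edge in $\cX_\Gamma$. This makes the subsequent graph classification cleaner, since you already know the admissible vertex types before you start walking. On the other hand, the paper closes by exhibiting the explicit quiver and relations for each of cases \textbf{(1)}--\textbf{(6)}, giving a concrete verification of the converse rather than appealing to ``short checks''; you may want to include at least a sentence confirming that in each case (A) and (B) hold, particularly the $n\geqslant 4$ bound in \textbf{(4)} and the $n\geqslant 3$ bound in \textbf{(3)}, which you correctly identify as forced by (B).
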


\begin{proof}  Suppose that $\Ag$ is quadratic. Let $\rho\subset\rho_\Gamma$ be a minimal generating set for
  $\Ig.$ If $\alpha$ is a vertex in $\Gamma$ such that
$\val(\alpha)>2$, then there is a relation of type one or type two of length at least $3$ in $\rho$. This contradicts the fact that $\Ag$ is quadratic.
Therefore $\val(\alpha)\ppq 2$ for all vertices $\alpha$ in $\Gamma.$

There are two cases to consider.
\begin{enumerate}
\item[(i)] We assume that there is a vertex $\alpha$ in $\Gamma$ with $\val(\alpha)=1.$

Then there is a unique edge $s$ in $\Gamma$ with endpoint $\alpha$. Let $\beta$
denote the other endpoint of $s$.
There are two subcases to consider here.

First suppose that edge $s$ is truncated at $\alpha$,
that is, $\mm(\alpha)=1$.
If  $\val(\beta)=1$, then, since $\Gamma$ is connected, $\Ag=
K[x]/(x^{\mm(\beta)+1})$ which is quadratic if and only if $\mm(\beta)=1$. Thus $\Gamma=\aa_2$ and
$\mm \equiv 1$; this is {\bf(1)}. Note that we can assume that $\mq\equiv 1$
since there are no relations of type one. On the other hand, if $\val(\beta)=2$,  then we have a relation of type two of length
$\val(\beta)\mm(\beta)+1\pgq 3$ in $\Ig$ associated to the edge $s$ incident
with vertex $\beta$. This relation cannot be in $\rho$, so the successor $s_1$ of
$s$ at $\beta$ is not truncated at its other endpoint
$\gamma$, by Lemma \ref{lemma:minimal generators BGA}. Hence we have a relation of
type one associated to $s_1$, of length
$\mm(\beta)\val(\beta)=\mm(\gamma)\val(\gamma)$, in $\rho.$ Therefore
$\mm(\beta)\val(\beta)=\mm(\gamma)\val(\gamma)=2.$ If $\val(\gamma)=1$ then
$\mm(\gamma)=2$ so that $\Gamma=\aa_3$ and the multiplicities are $(1,1,2)$;
this is part of \textbf{(3)}. If $\val(\gamma)=2$, then $\mm(\gamma)=1$ and we
continue, to get $\aa_n$ with $n\pgq 4$ and multiplicities either
$(1,1,\ldots,1,1)$ or $(1,1,\ldots,1,2)$ (the last edge can be truncated if
$n\pgq4$). We have thus obtained \textbf{(3)} and \textbf{(4)}.

We may now assume that there are no truncated edges. Since the
edge $s$ is not truncated at either of its endpoints $\alpha$ and $\beta$, we have a relation of type one associated to $s$ in $\rho$ so that $\val(\beta)\mm(\beta)=2 = \val(\alpha)\mm(\alpha)$.
Since $\val(\alpha) = 1$, we have that $\mm(\alpha) = 2$. Moreover, either
$\val(\beta)=1$ in which case $\Gamma$ is the graph $\aa_2$ with $\mm \equiv 2$, or $\val(\beta)=2$, $\mm(\beta)=1$
and we continue to get the graph $\aa_n$ with multiplicities $(2,1,1,\ldots,1,2)$.  This gives {\bf (2)}.

\item[(ii)] We assume that all vertices have valency $2.$

Let $\alpha$ be a vertex in $\Gamma.$ Since $\val(\alpha)=2,$ there is either a loop, a double edge or two single edges at $\alpha.$

If there is a loop $s$ at $\alpha$, then, since $\Gamma$ is connected, $\Gamma$ is equal to $\xymatrix{\alpha\ar@{-}@(ur,dr)^s}$. Then there is a relation of type one associated to $s$ so that $\val(\alpha)\mm(\alpha)=2$ and therefore $\mm(\alpha)=1$. This is the graph of {\bf (6)}.
If there is a double edge at $\alpha$, then a similar argument shows that $\mm(\alpha)=1.$ If
$\beta$ is the other vertex of this double edge, then we have $\val(\beta)\pgq
2$. However, all vertices in $\Gamma$ have valency at most $2,$  so that
$\val(\beta)=2$. So $\Gamma$ is $\xymatrix{\alpha\ar@{=}[r]&\beta}$ with $\mm \equiv 1$, and we have the graph $\widetilde{\aa}_1$ of {\bf (5)}.
Finally, suppose there are two edges $s$ and $t$ which are incident with $\alpha$. By assumption, neither $s$ nor $t$ is truncated, so that there is a relation of type one associated to both $s$ and $t$, and therefore $\val(\alpha)\mm(\alpha)=2$ and $\mm(\alpha)=1.$ Continuing this argument, shows that $\Gamma=\widetilde{\aa}_n$ with $n\pgq 2$ and $\mm \equiv 1$, which is {\bf (5)}.
\end{enumerate}

This gives all possible Brauer graphs $(\Gamma,\mo,\mm,\mq)$.  We now give the associated
Brauer graph algebras, which are all quadratic.
\begin{enumerate}[\bf (1)]
\item $\Ag=K[x]/(x^2).$

\item \def\objectstyle{\scriptstyle}
$\Ag = K\Qg/\Ig$ where $ \Qg$ is the quiver
  \[\xymatrix{1\ar@/^.5pc/[r]^{a_1}\ar@(ul,dl)_{b_0} & 2\ar@/^.5pc/[r]^{a_2}\ar@/^.5pc/[l]^{\bar{a}_1} & 3\ar@/^.5pc/[l]^{\bar{a}_2}&\cdots&n-2\ar@/^.5pc/[r]^{a_{n-2}}&n-1\ar@/^.5pc/[l]^{\bar{a}_{n-2}}\ar@(ur,dr)^{b_{n-1}}
  } \]
  and the ideal $\Ig$ is generated by $a_i\bar{a}_i-\bar{a}_{i-1}a_{i-1}$, $a_{i-1}a_{i}$ and $\bar{a}_i\bar{a}_{i-1}$ for $2\ppq i \ppq n-2$,
  $a_1\bar{a}_1-b_0^2$, $\bar{a}_{n-2}a_{n-2}-qb_{n-1}^2$, $b_0a_1,$ $\bar{a}_1b_0,$
  $a_{n-2}b_{n-1}$ and $b_{n-1}\bar{a}_{n-2}$ for some nonzero $q\in K$. Note that we
  have scaled the arrows in the quiver so that the quantizing function $\mq$ is 1
  except for one value which we have denoted by $q$; moreover, if $q$ has
  a square root in $K$, then we can replace $q$ by $1$ (see \cite{ES}).

\item $\Ag = K\Qg/\Ig$ where $ \Qg$ is the quiver
  \[\xymatrix{1\ar@/^.5pc/[r]^{a_1} & 2\ar@/^.5pc/[r]^{a_2}\ar@/^.5pc/[l]^{\bar{a}_1} & 3\ar@/^.5pc/[l]^{\bar{a}_2}&\cdots&n-2\ar@/^.5pc/[r]^{a_{n-2}}&n-1\ar@/^.5pc/[l]^{\bar{a}_{n-2}}\ar@(ur,dr)^{b_{n-1}}
  } \]
  and the ideal $\Ig$ is generated by $a_i\bar{a}_i-\bar{a}_{i-1}a_{i-1}$, $a_{i-1}a_{i}$ and $\bar{a}_i\bar{a}_{i-1}$ for $2\ppq i \ppq n-2$,
   $\bar{a}_{n-2}a_{n-2}-qb_{n-1}^2$,
  $a_{n-2}b_{n-1}$ and $b_{n-1}\bar{a}_{n-2}$ for some nonzero $q\in K$, which can be
  replaced by $1$ if $q$ has
  a square root in $K$.

\item $\Ag = K\Qg/\Ig$ where $ \Qg$ is the quiver
  \[\xymatrix{1\ar@/^.5pc/[r]^{a_1} & 2\ar@/^.5pc/[r]^{a_2}\ar@/^.5pc/[l]^{\bar{a}_1} & 3\ar@/^.5pc/[l]^{\bar{a}_2}&\cdots&n-2\ar@/^.5pc/[r]^{a_{n-2}}&n-1\ar@/^.5pc/[l]^{\bar{a}_{n-2}}} \]
  and the ideal $\Ig$ is generated by $a_i\bar{a}_i-\bar{a}_{i-1}a_{i-1}$, $a_{i-1}a_{i}$ and $\bar{a}_i\bar{a}_{i-1}$ for $2\ppq i \ppq n-2$. Note that we
  have scaled the arrows in the quiver so that $\mq\equiv 1$.

\item  $\Ag = K\Qg/\Ig$ where $ \Qg$ is the quiver with $n$ vertices and $2n$ arrows
\[\xymatrix@R=0.3pc@C=0.4pc@M=0pc{
&&&&&&&&&&&\cdot\ar@/^.5pc/[rrrrrd]^{a}\ar@/^.5pc/[llllld]^{\bar{a}}\\
&&&&&&\cdot\ar@/^.5pc/[rrrrru]^{a}\ar@/^.5pc/[llldd]^{\bar{a}}&&&&&&&&&&\cdot\ar@/^.5pc/[rrrdd]^{a}\ar@/^.5pc/[lllllu]^{\bar{a}}\\\\
&&&\cdot\ar@/^.5pc/[rrruu]^{a}\ar@{.}@/_.3pc/[ldd]&&&&&&&&&&&&&&&&\cdot\ar@/^.5pc/[llluu]^{\bar{a}}\ar@{.}@/^.3pc/[rdd]\\\\
&&&&&&&&&&&&&&&&&&&&&&&\\
\\
\\
\\\\\\\\\\\\\\\\
&&&&&&&&&&&&&&&&\\
&&&&&&&&&&&\cdot\ar@/_.3pc/@{.}[rrrrru]\ar@/^.3pc/@{.}[lllllu] }
\]
and the ideal $\Ig$ is generated by  $a_ia_{i+1}$, $\bar{a}_{i-1}\bar{a}_{i-2}$ and $a_i\bar{a}_i-q_i\bar{a}_{i-1}a_{i-1}$,
for $i=0, \ldots , m-1$, with $q_i\in K,$ $q_i\neq 0,$ where the subscripts are taken modulo $n$ and where $a_i$ denotes the arrow that goes from vertex $i$ to vertex
$i+1$ and  $\bar{a}_i$ denotes the arrow that goes from vertex $i+1$
to vertex $i$. Again we can rescale the arrows so that $\mq$ is $1$ at all
but one place (see \cite{EGST}).

\item  $\Ag=K\left[\xymatrix{\cdot\ar@(ur,dr)^\alpha\ar@(ul,dl)_\beta}\right]/(\alpha\beta-q\beta\alpha,
  \alpha^2, \beta^2)$ for some nonzero $q\in K.$
\qedhere\end{enumerate}
\end{proof}

We now turn to $d$-homogeneous algebras with $d\pgq 3.$

\begin{prop}\label{prop:d-homogeneous}
Let $(\Gamma,\mo,\mm,\mq)$ be a quantized Brauer graph and let $\Ag$ be the associated Brauer
graph algebra. Then $\Ag$ is $d$-homogeneous
with $d\pgq 3$ if and only if $\Gamma$ is a star with $n$ edges, for some $n\pgq 1$, such that $n$
divides $d-1$, the multiplicity of the central vertex is $\frac{d-1}{n}$ and the
other multiplicities are equal to $1.$  The algebra $\Ag$ is uniquely determined
by $(\Gamma,\mo,\mm)$; it is the symmetric Nakayama algebra whose quiver is a cycle of length $n$ and its ideal $\Ig$ is generated by all paths of length $d.$
\end{prop}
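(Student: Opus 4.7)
The plan is to use Lemma~\ref{lemma:minimal generators BGA} to control $\rho$, a minimal generating set for $\Ig$: all type one and type three relations lie in $\rho$, so $d$-homogeneity with $d\pgq 3$ forces the complete absence of type three (quadratic) relations, and all surviving generators must have length exactly $d$. I would first analyze when type three relations occur: at a vertex $v_s$, a quadratic relation $ab$ arises whenever one can compose an incoming arrow coming from the ``$\alpha$-side'' of $v_s$ with an outgoing arrow on the ``$\beta$-side'' (for the two endpoints $\alpha,\beta$ of $s$), and such pairs of arrows exist precisely when both endpoints contribute arrows (i.e.\ $s$ is not truncated at either endpoint and the valencies permit crossings). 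Excluding all such crossings forces every edge of $\Gamma$ to have at least one endpoint of valency~$1$, rules out loops, and rules out multiple edges; connectedness then forces $\Gamma$ to be a star on $n$ edges for some $n\pgq 1$ (the case $n=1$ being $\aa_2$).

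I would handle $\aa_2$ by direct inspection along the lines of the analysis in Proposition~\ref{prop:quadratic}: the Brauer graph algebra is a truncated polynomial ring, and the only way to be $d$-homogeneous with $d\pgq 3$ is $\mm = (d-1,1)$, matching the claim with $n=1$. For a star with $n\pgq 2$ edges, central vertex $c$, and leaves $v_1,\dots,v_n$, suppose some leaf $v_j$ satisfied $\mm(v_j)\pgq 2$; then $e_j$ is not truncated at $v_j$, so the $v_j$-cycle produces a loop at $v_{e_j}$, and composing this loop with an incoming $c$-cycle arrow gives a quadratic type three relation — a contradiction. Hence every leaf has multiplicity~$1$, every edge is truncated at its leaf, and no type one relations survive. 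By Lemma~\ref{lemma:minimal generators BGA}, since the successor at $c$ of each truncated $e_i$ is another truncated edge $e_{i+1}$, all type two relations lie in $\rho$; each has length $n\mm(c)+1$, and $d$-homogeneity forces $n\mm(c)+1 = d$, i.e.\ $n\mid d-1$ and $\mm(c)=(d-1)/n$.

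For the converse and the explicit description of $\Ag$, in this configuration $\Qg$ contains only the arrows from the successor sequence at $c$ (leaves contribute no arrows), yielding a cycle of length $n$; the ideal $\Ig$ is generated by the $n$ type two relations, which are precisely the $n$ distinct paths of length $d$ in this cycle, and the quantizing function $\mq$ plays no role, so $\Ag$ depends only on $(\Gamma,\mo,\mm)$ and coincides with the claimed symmetric Nakayama algebra. The main obstacle will be the first step: verifying rigorously that each forbidden configuration (loops, double edges, or ordinary edges with two non-leaf endpoints) really does produce a quadratic type three relation, which requires careful bookkeeping of the successor-sequence conventions — especially at loops, where a single edge contributes twice to $\Gamma_1$ and thereby supplies two incident arrows at the same vertex of $\Qg$.
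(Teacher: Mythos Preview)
Your proposal is correct and follows essentially the same route as the paper: absence of type three relations forces every edge to be truncated at one endpoint, hence $\Gamma$ is a star with all outer multiplicities equal to $1$, and then the type two relations determine $n\mm(c)+1=d$. The paper's argument is slightly more streamlined because it passes directly from ``no type three relations'' to ``every edge is truncated at exactly one vertex'' (which already encodes both $\val=1$ and $\mm=1$ at that endpoint), whereas you first extract the valency condition to get the star shape and then re-argue the multiplicity condition at the leaves via a second type three consideration --- correct, but redundant.
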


\begin{proof} If $\Ag$ is $d$-homogeneous, then there are no relations of type
  three, so the quiver $\Qg$ cannot contain
  distinct cycles at the same vertex. In terms of the graph $\Gamma$, this means that all edges in
  $\Gamma$ are truncated at exactly one vertex. Therefore $\Gamma$ is a
  star in which all the outer vertices have multiplicity $1.$ Let $n$ be the
  number of edges in $\Gamma$ and $m$ the multiplicity of the central
  vertex. The only relations in the algebra $\Ag$ are of type two and are of length $nm+1$.
  Hence $nm=d-1.$ Finally, since $\Ag$ is monomial, we may assume that
  $\mq\equiv 1.$
\end{proof}

It is well-known that all the Brauer graph algebras in
Proposition~\ref{prop:quadratic}{\bf (1)}, {\bf (2)}, {\bf (5)}, {\bf (6)} and in Proposition~\ref{prop:d-homogeneous} are $d$-Koszul (see for instance \cite{EGST,ES,ST} and the references therein). However, it is easy to verify that the algebras of Proposition~\ref{prop:quadratic}{\bf (3)} and {\bf (4)} are not Koszul, since the resolution of the simple module at the vertex 1 is not linear in either of these cases. This gives the following result.

\begin{thm}\label{thm:charac_Koszul}  Let $(\Gamma,\mo,\mm,\mq)$ be a quantized Brauer graph and let $\Ag$ be the associated Brauer graph algebra. Then $\Ag$ is Koszul if and only if it is quadratic
and either $\Gamma = \aa_2$ or $\Gamma$ has no truncated edges. For $d\pgq 3$, the Brauer graph algebra $\Ag$ is $d$-Koszul if and only if it is $d$-homogeneous.
\end{thm}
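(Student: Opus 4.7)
The plan is to derive both statements from Propositions~\ref{prop:quadratic} and~\ref{prop:d-homogeneous}, using only the standard facts that a Koszul (respectively, $d$-Koszul) algebra must be quadratic (respectively, $d$-homogeneous).

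For the Koszul characterization, if $\Ag$ is Koszul then $\Ig$ is quadratic, so $(\Gamma,\mo,\mm)$ must appear in the list of Proposition~\ref{prop:quadratic}. Direct inspection of that list shows that cases \textbf{(1)}, \textbf{(2)}, \textbf{(5)}, \textbf{(6)} are precisely the Brauer graphs that are either $\aa_2$ (with $\mm\equiv 1$) or have no truncated edges, whereas \textbf{(3)} and \textbf{(4)} are exactly the quadratic Brauer graphs possessing at least one truncated edge. The ``if'' half then reduces to a citation: the algebras appearing in \textbf{(1)}, \textbf{(2)}, \textbf{(5)}, \textbf{(6)} are already known to be Koszul, as recorded in \cite{EGST,ES,ST} and the references therein.

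For the ``only if'' half I rule out Koszulity in cases \textbf{(3)} and \textbf{(4)} by computing enough of the minimal graded projective resolution of the simple $S_1$ attached to the truncated edge at the outer vertex~$1$. Since that edge is truncated, $P_{S_1}$ is uniserial of composition length three, namely $S_1,S_2,S_1$, and so $\Omega^1 S_1$ is generated in internal degree~$1$. The conceptual obstruction is explained by Lemma~\ref{lemma:minimal generators BGA}: the cubic type-two monomial associated with the truncated edge is redundant in the minimal quadratic generating set of $\Ig$, yet still holds as a relation in $\Ag$ and must resurface in the resolution. Chasing the syzygies through the quadratic type-three monomial relations and the type-one commutation at the non-truncated neighbour of the truncated edge, one eventually produces a minimal generator of some $\Omega^n S_1$ whose internal degree is strictly greater than~$n$, breaking linearity and hence Koszulity.

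For the $d$-Koszul statement with $d\pgq 3$, any $d$-Koszul algebra is $d$-homogeneous, so Proposition~\ref{prop:d-homogeneous} forces such an $\Ag$ to be the symmetric Nakayama algebra whose quiver is a cycle of length~$n$ and whose ideal is generated by all paths of length $d=nm+1$. Symmetric Nakayama algebras of this form constitute a classical family of $d$-Koszul algebras, recorded for instance in \cite{EGST,ST}, which supplies the converse. The main obstacle throughout is the syzygy calculation for \textbf{(3)} and \textbf{(4)}: the homological position at which linearity fails depends on $\Gamma$, so one must track carefully how the cubic type-two monomial relation, though redundant at the level of a minimal generating set of $\Ig$, re-enters the resolution as a socle element of a later cover.
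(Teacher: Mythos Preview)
Your proposal is correct and follows essentially the same approach as the paper: reduce to the lists in Propositions~\ref{prop:quadratic} and~\ref{prop:d-homogeneous}, invoke the literature \cite{EGST,ES,ST} for cases \textbf{(1)}, \textbf{(2)}, \textbf{(5)}, \textbf{(6)} and the Nakayama case, and rule out \textbf{(3)} and \textbf{(4)} by checking that the minimal resolution of $S_1$ is not linear. Your conceptual gloss via Lemma~\ref{lemma:minimal generators BGA} is extra commentary not present in the paper, but the actual argument is the same syzygy computation the paper alludes to.
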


Now, fix an integer $d>2$. We describe the 2-$d$-homogeneous Brauer graph algebras.

\begin{prop}\label{prop:2dhomogeneous} Let $(\Gamma,\mo,\mm,\mq)$ be a quantized Brauer graph and let $\Ag$ be the associated Brauer graph algebra. Then $\Ag$ is 2-$d$-homogeneous if and only if $(\Gamma,\mo,\mm,\mq)$  satisfies one of the following  conditions.
\begin{enumerate}[\bf (1)]
\item For all vertices $\alpha$ in $\Gamma,$ we have $\mm(\alpha)\val(\alpha)=d$.
\item $\Gamma$ has  a truncated edge, $\Gamma\neq\aa_2,$  no two
  successors are truncated, and for every vertex $\alpha$ in $\Gamma$ we have $\mm(\alpha)\val(\alpha)\in\set{1,d}$.
\end{enumerate}
\end{prop}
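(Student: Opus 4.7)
The plan is to apply Lemma~\ref{lemma:minimal generators BGA} to describe a minimal generating set $\rho$ for $\Ig$ and then to track the length of each relation appearing in it. By that lemma, $\rho$ consists of all type one relations of length $\mm(\alpha)\val(\alpha)=\mm(\beta)\val(\beta)$ (for an edge $s$ with endpoints $\alpha,\beta$), all type three relations of length $2$, and exactly those type two relations of length $\mm(\beta)\val(\beta)+1$ associated to a truncated edge $s$ at $\alpha$ whose successor at the other endpoint $\beta$ is also truncated. One further observation I will use throughout is that the subgraph $\Gamma'$ obtained by deleting the truncated pendant edges together with their leaf endpoints is connected, and that the very form of a type one relation forces $\mm(\alpha)\val(\alpha)$ to be constant along $\Gamma'$.

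For the forward direction, assume $\Ag$ is 2-$d$-homogeneous and split on whether $\Gamma$ has a truncated edge. If $\Gamma$ has none, then $\Gamma'=\Gamma$ and every vertex carries the same value $\mm(\alpha)\val(\alpha)$; this value must equal $d$, since it lies in $\{2,d\}$ but cannot be $2$ (else every relation would be quadratic), yielding case~\textbf{(1)}. If $\Gamma$ does have a truncated edge, one first rules out two truncated successors: otherwise a type two relation of length $\mm(\beta)\val(\beta)+1$ lies in $\rho$, and requiring this length to belong to $\{2,d\}$ reduces to either $\Gamma=\aa_2$ or to a small path/star configuration (for instance $\aa_3$ with $\mm\equiv 1$) in which $\rho$ consists entirely of length-$d$ relations, contradicting the presence of a length-$2$ generator. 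A parallel quadratic-versus-2-$d$-homogeneous argument shows that the common value of $\mm(\alpha)\val(\alpha)$ along $\Gamma'$ must be $d$ rather than $2$; together with $\mm(\alpha)\val(\alpha)=1$ at each truncated leaf, this gives case~\textbf{(2)}.

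For the backward direction, assume \textbf{(1)} or \textbf{(2)} holds. In case~\textbf{(1)} every edge is non-truncated, so Lemma~\ref{lemma:minimal generators BGA} removes all type two relations from $\rho$; the surviving type one relations (length $d$) and type three relations (length $2$) both occur, because at every $v_s$ the cycles $C_{s,\alpha}$ and $C_{s,\beta}$ contribute arrows whose composition across the two sides of $s$ cannot be a subpath of any single cycle. In case~\textbf{(2)}, the hypothesis that no two successors are truncated again removes type two relations from $\rho$, the type one relations all have length $d$, and a type three relation of length $2$ is produced at any vertex $v_s$ of $\Qg$ corresponding to a non-truncated edge of $\Gamma$; such an edge exists because $\Gamma$ is not entirely made of truncated edges---otherwise $\Gamma$ would be a star of truncated leaves, immediately forcing two truncated successors or $\Gamma=\aa_2$, both excluded. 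The main obstacle I expect is the careful enumeration, in the two-truncated-successors branch of the forward argument, of the small-graph configurations (for example $\aa_3$ with $\mm\equiv 1$, or short paths with both outer edges truncated) that satisfy the numerical length constraint $\mm(\beta)\val(\beta)+1\in\{2,d\}$ yet fail to produce any length-$2$ generator, thereby falsifying 2-$d$-homogeneity.
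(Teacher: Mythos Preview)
Your overall strategy matches the paper's: use Lemma~\ref{lemma:minimal generators BGA} to list the lengths appearing in a minimal $\rho$, then split on whether $\Gamma$ has a truncated edge. Your device of passing to the connected subgraph $\Gamma'$ obtained by deleting truncated pendant edges is a clean way to phrase what the paper does with the phrase ``continuing in this way,'' and the backward direction is handled essentially as in the paper.

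The one place where your sketch diverges from the paper, and where it is genuinely incomplete, is the two-truncated-successors branch. You write that requiring $\mm(\beta)\val(\beta)+1\in\{2,d\}$ ``reduces to either $\Gamma=\aa_2$ or to a small path/star configuration,'' and you flag the resulting enumeration as the main obstacle. But the length constraint alone only gives $\mm(\beta)\val(\beta)\in\{1,d-1\}$; the case $\mm(\beta)\val(\beta)=d-1$ does not by itself force $\Gamma$ to be small, so an enumeration is both unpleasant and not yet justified. The paper avoids this entirely with a two-line dichotomy: if every edge at $\beta$ is truncated, then $\Gamma$ is a star with truncated outer edges and $\Ag$ is $d$-homogeneous by Proposition~\ref{prop:d-homogeneous}, contradicting 2-$d$-homogeneity; otherwise there is a nontruncated edge $u$ at $\beta$, whence $\val(\beta)\geqslant 3$ (since $s$, its successor $s_1$, and $u$ are distinct) and the type one relation at $u$ has length $\mm(\beta)\val(\beta)=d-1$, which cannot lie in $\{2,d\}$ (if $d-1=2$ then $\mm(\beta)\val(\beta)=2<3\leqslant\val(\beta)$). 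Replacing your enumeration by this argument closes the gap and makes the proof go through exactly as the paper's does.
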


\begin{proof} Note that there must be at least one edge in $\Gamma$ that is not truncated at
  either of its endpoints,
  otherwise we are in the situation of Proposition~\ref{prop:quadratic}{\bf (1)} or of
  Proposition~\ref{prop:d-homogeneous}, and the algebra $\Ag$ is quadratic or
  $d$-homogeneous. Let $\rho\subset\rho_\Gamma$ be a minimal set of generators
  for $\Ig.$ The proof has two cases.
\begin{enumerate}[(i)]
\item First assume that there is an  edge $s$ in $\Gamma$
that is truncated at the vertex $\alpha$ in $\Gamma.$ Let $\beta$ be
the other endpoint of $s.$ If $\val(\beta)=1$, then we only have relations of type two in $\rho$,
all of the same length, so that $\Ag$ is homogeneous, which gives a
contradiction.

We may therefore assume that there is an edge $t$ in $\Gamma$,
incident with $\beta$  and such that $t\neq s.$  There is a relation of
type two associated to $s$ at $\beta$ of length $\val(\beta)\mm(\beta)+1\pgq 3$. If $t$ is the
successor of $s$ at $\beta$ and if $t$ is  truncated at its other endpoint, then,
by Lemma~\ref{lemma:minimal generators BGA}, this relation of type two is in $\rho,$ so that
$\val(\beta)\mm(\beta)+1=d.$ However, $\Ag$ is not $d$-homogeneous so there must be a
nontruncated edge $u$ incident with $\beta$ so that $\val(\beta)\pgq3.$  Thus
there is a relation of type one associated to $u$ of length
$\val(\beta)\mm(\beta)=d-1$ so that we have $d-1=2$, since $\Ag$ is $2$-$d$-homogeneous. But $\val(\beta)\pgq 3$
so that we have a contradiction. This shows that no two successors are truncated
and none of the relations of type two are in $\rho$.

Therefore the successor $t$ of $s$ at $\beta$ is not truncated, and  there is a relation of type one associated to $t$ of length
$\val(\beta)\mm(\beta)=\val(\gamma)\mm(\gamma)$, where $\gamma$ is the other
endpoint of $t.$ Since $\Ag$ is not quadratic, we must have $\val(\beta)\mm(\beta)=\val(\gamma)\mm(\gamma)=d$.
Continuing in this way, we see that every relation of type one must have length
$d$ and we get {\bf (2)}. Moreover, if \textbf{(2)} is satisfied, then there are
(quadratic) relations of type three since there are at least two
adjacent cycles $C_{t,\beta}$ and $C_{t,\gamma}$ in $\Qg.$ Since there are no relations of type two in $\rho$ and all relations of
type one are of length $d$, it follows that $\Ag$ is
indeed 2-$d$-homogeneous.

\item Now assume that there are no truncated edges in $\Gamma.$ Therefore there
are no relations of type two.

We suppose first that all vertices have valency at least $2$. Then the relations of type one have
   length $2$ or $d$. More precisely, for any edge $s$ with endpoints $\alpha$
   and $\beta$, we must have
   $\val(\alpha)\mm(\alpha)=\val(\beta)\mm(\beta)\in\set{2,d}$. Since $\Gamma$ is
   connected and $\Ag$ is not
   quadratic, we must have $\val(\alpha)\mm(\alpha)=d$ for all vertices $\alpha$, and we are in case {\bf (1)}.
   Moreover, if all vertices $\alpha$ have valency at least $2$ and
   $\val(\alpha)\mm(\alpha)=d$, then there are relations of type three so that $\Ag$ is
   2-$d$-homogeneous.

Finally, we  consider the case where there is a vertex $\alpha$ with $\val(\alpha)=1.$  Let $s$ be the
edge incident with $\alpha$ and let $\beta$ be the other endpoint of $s$. Since $s$ is not truncated at either endpoint, we have $\mm(\alpha)>1$ and $\val(\beta)\mm(\beta)>1.$
If $\val(\beta)=1$, then there are quadratic relations of type three, and a relation of type one associated to $s$ of length $\mm(\alpha)=\mm(\beta)$, and so $\mm(\alpha)=\mm(\beta)$ must equal $d.$ Thus $\Gamma=\aa_2$ with
multiplicity $d$ at each vertex. It is easy to see that the corresponding algebra is 2-$d$-homogeneous.
On the other hand, if $\val(\beta)>1$, let $t$ be another edge incident with $\beta$. By assumption, the edge $t$ is not truncated at its other
endpoint $\gamma.$ Then there are quadratic relations of type three, a relation of type one associated to $s$ of length
$\mm(\alpha)=\val(\beta)\mm(\beta)$ and a relation of type one associated to $t$ of length
$\val(\beta)\mm(\beta)=\val(\gamma)\mm(\gamma)$. Therefore
$\mm(\alpha)=\val(\beta)\mm(\beta)=\val(\gamma)\mm(\gamma)=d$. Continuing in this way,
we have $\val(\varepsilon)\mm(\varepsilon)=d$ at every
vertex $\varepsilon$ in $\Gamma$, which completes {\bf (1)}. The corresponding algebra is 2-$d$-homogeneous.
\qedhere\end{enumerate}
\end{proof}

We end this section with two corollaries which describe in more detail the 2-$d$-homogeneous Brauer graph algebras $\Ag$ in the cases where $\Gamma$ is a star and where $\Gamma$ is $\aa_n$.

\begin{cor} Suppose $\Gamma$ is a star whose central vertex is $\alpha_0$ and
  the other vertices are ordered cyclically $\alpha_1,\ldots,\alpha_n$; set $\alpha_{n+1}=\alpha_1.$ Then the
  associated Brauer graph algebra is 2-$d$-homogeneous if and only if $n$
  divides $d, $ the vertex $\alpha_0$ has multiplicity $\frac{d}{n}$, and for
  every $i$ with $1\ppq i\ppq n$ we have $\mm(\alpha_i)\in\set{1,d}$ and
   $\mm(\alpha_i)\mm(\alpha_{i+1})\in\set{d,d^2}.$
\end{cor}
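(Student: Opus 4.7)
The plan is to apply Proposition~\ref{prop:2dhomogeneous} directly to the star, translating its two cases into the multiplicative conditions of the corollary. First I would fix notation: label the edges of the star $e_1,\ldots,e_n$ so that $e_i$ joins $\alpha_0$ to $\alpha_i$ and the given cyclic ordering at $\alpha_0$ sends $e_i$ to $e_{i+1}$ (indices modulo $n$). Then $\val(\alpha_0)=n$ and $\val(\alpha_i)=1$ for every $i\pgq 1$, so an edge $e_i$ is truncated at $\alpha_i$ exactly when $\mm(\alpha_i)=1$, and the successor at $\alpha_0$ of a truncated edge $e_i$ is precisely $e_{i+1}$.

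Next I would dispose of the two cases of Proposition~\ref{prop:2dhomogeneous} in turn. Case~{\bf(1)} forces $n\,\mm(\alpha_0)=d$ and $\mm(\alpha_i)=d$ for every $i\pgq 1$, which yields $n\mid d$, $\mm(\alpha_0)=d/n$, each outer $\mm(\alpha_i)\in\set{1,d}$, and each product $\mm(\alpha_i)\mm(\alpha_{i+1})=d^2$. Case~{\bf(2)} provides a truncated edge, $\Gamma\neq\aa_2$, the condition $\mm(\alpha)\val(\alpha)\in\set{1,d}$ at every vertex, and that no two successors are truncated. Since $\Gamma\neq\aa_2$ gives $n\pgq 2$, we have $n\,\mm(\alpha_0)\pgq 2$ and so $n\,\mm(\alpha_0)=d$; for the outer vertices we read off $\mm(\alpha_i)\in\set{1,d}$. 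Using the successor identification above, ``no two successors are truncated'' translates into: no two cyclically consecutive vertices $\alpha_i,\alpha_{i+1}$ both carry multiplicity~$1$. Since each factor lies in $\set{1,d}$, this is equivalent to $\mm(\alpha_i)\mm(\alpha_{i+1})\in\set{d,d^2}$, as the excluded value $1$ is exactly the product obtained when both multiplicities are~$1$.

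For the converse, I would start from the multiplicative conditions of the corollary and verify that one of the two cases of Proposition~\ref{prop:2dhomogeneous} holds. If every outer $\mm(\alpha_i)$ equals $d$, then $\mm(\alpha)\val(\alpha)=d$ everywhere and Case~{\bf(1)} applies. Otherwise some $\mm(\alpha_j)=1$, giving a truncated edge $e_j$; here the constraint $\mm(\alpha_i)\mm(\alpha_{i+1})\in\set{d,d^2}$ together with $d>2$ excludes $n=1$ (where the condition would collapse to $\mm(\alpha_1)^2\in\set{d,d^2}$ and force $\mm(\alpha_1)=d$, contradicting truncation), so $\Gamma\neq\aa_2$; two consecutive outer multiplicities cannot both be $1$; and every $\mm(\alpha)\val(\alpha)$ lies in $\set{1,d}$. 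Thus Case~{\bf(2)} applies. The only delicate point is the clean equivalence between the combinatorial ``no two truncated successors'' condition and the purely multiplicative statement $\mm(\alpha_i)\mm(\alpha_{i+1})\in\set{d,d^2}$; everything else is bookkeeping on the valencies of the star.
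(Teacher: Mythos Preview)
Your proposal is correct and is exactly the approach the paper intends: the corollary is stated without proof immediately after Proposition~\ref{prop:2dhomogeneous}, and your argument simply specialises that proposition to the star by reading off the valencies and translating ``no two successors are truncated'' into the product condition $\mm(\alpha_i)\mm(\alpha_{i+1})\in\set{d,d^2}$. The only point worth a remark is your handling of the $n=1$ case in the converse, which is fine since $d>2$ is fixed throughout this section.
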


\begin{cor}
Suppose $\Gamma=\aa_n$. Then the associated generalized Brauer tree algebra is 2-$d$-homogeneous if and only if $n\pgq 3,$ $d$ is even, the multiplicities of the first and last vertex are in $\set{1,d}$ with at least one of them equal to $d$ if $n=3$, and the multiplicities of the other vertices are all equal to $\frac{d}{2}.$
\end{cor}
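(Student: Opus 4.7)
The plan is to specialize Proposition~\ref{prop:2dhomogeneous} to the case $\Gamma=\aa_n$ with $n\pgq 3$. I label the vertices $\alpha_1,\ldots,\alpha_n$ along the path so that $\val(\alpha_1)=\val(\alpha_n)=1$ while $\val(\alpha_i)=2$ for $2\ppq i\ppq n-1$, and I note that the only edges that can be truncated are the two end edges, each being truncated precisely when its end vertex has multiplicity $1$.

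Condition \textbf{(1)} of Proposition~\ref{prop:2dhomogeneous} asks that $\mm(\alpha)\val(\alpha)=d$ at every vertex. For $\aa_n$ this forces $\mm(\alpha_1)=\mm(\alpha_n)=d$ and $\mm(\alpha_i)=d/2$ for each interior vertex; in particular $d$ must be even. This accounts exactly for the subcase in which both end multiplicities equal $d$, so that $\Gamma$ has no truncated edges. Condition \textbf{(2)} requires $\Gamma\neq\aa_2$ (automatic as $n\pgq 3$), at least one truncated edge, and $\mm(\alpha)\val(\alpha)\in\set{1,d}$ at every vertex. Read on $\aa_n$ this gives $\mm(\alpha_1),\mm(\alpha_n)\in\set{1,d}$ at the ends; and, since $2\mm(\alpha_i)\pgq 2$, the interior vertices force $\mm(\alpha_i)=d/2$, so $d$ is even here too.

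The one point that will require some care is the ``no two successors are truncated'' clause of case \textbf{(2)}. For $n\pgq 4$ this is automatic: at the interior endpoint $\alpha_2$ (respectively $\alpha_{n-1}$) of an end edge, the successor is the adjacent non-end edge $\{\alpha_2,\alpha_3\}$ (respectively $\{\alpha_{n-2},\alpha_{n-1}\}$), both of whose endpoints have valency $2$, so this successor cannot be truncated. For $n=3$, however, the two edges are each other's successors at $\alpha_2$, so having both $\mm(\alpha_1)=\mm(\alpha_3)=1$ would produce two truncated successors, violating the hypothesis. Hence when $n=3$ at least one of $\mm(\alpha_1),\mm(\alpha_3)$ must equal $d$, which is precisely the exceptional clause in the statement.

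Since cases \textbf{(1)} and \textbf{(2)} of Proposition~\ref{prop:2dhomogeneous} are disjoint (according to whether any end edge is truncated), their union yields the full characterisation claimed: $d$ even, $\mm(\alpha_1),\mm(\alpha_n)\in\set{1,d}$ with at least one of them equal to $d$ when $n=3$, and $\mm(\alpha_i)=d/2$ for $2\ppq i\ppq n-1$. For the converse I simply read the argument in reverse, observing that any multiplicity data satisfying these conditions places $\Ag$ in case \textbf{(1)} (when both end multiplicities equal $d$) or in case \textbf{(2)} (when at least one end multiplicity equals $1$, the $n=3$ clause ruling out the forbidden pair of truncated successors), so that Proposition~\ref{prop:2dhomogeneous} yields the $2$-$d$-homogeneity of $\Ag$.
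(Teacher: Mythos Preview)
Your argument for $n\pgq 3$ is correct and is exactly the intended specialization of Proposition~\ref{prop:2dhomogeneous}; the paper gives no explicit proof of this corollary, so this is the expected approach. However, there is a gap: you begin by assuming $n\pgq 3$ without justifying why $\Ag$ cannot be $2$-$d$-homogeneous when $n=2$, and the condition $n\pgq 3$ is part of the equivalence you are asked to establish.

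In fact this missing step cannot be carried out, because the statement as written is not quite right at $n=2$. Case~{\bf (2)} of Proposition~\ref{prop:2dhomogeneous} explicitly excludes $\Gamma=\aa_2$, but case~{\bf (1)} does not: taking $\Gamma=\aa_2$ with $\mm(\alpha_1)=\mm(\alpha_2)=d$ gives $\mm(\alpha)\val(\alpha)=d$ at both vertices, and the proof of Proposition~\ref{prop:2dhomogeneous} itself confirms (in case~(ii), the paragraph where $\val(\beta)=1$) that this algebra is $2$-$d$-homogeneous. So the corollary should also allow $n=2$ with both end multiplicities equal to $d$; note that in that instance there are no interior vertices, so the parity constraint on $d$ disappears. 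Your omission of the $n=2$ case is therefore not merely a write-up oversight but conceals a genuine exception to the stated characterization.
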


We now look more generally at the Ext algebra of a Brauer graph algebra. We will return in Section~\ref{sec:truncated_edge} to 2-$d$-homogeneous Brauer graph algebras and the degrees in which the Ext algebra is generated.

\section{The Ext algebra and coverings}\label{sec:coverings}

In this section, we use the covering theory for Brauer graphs which was developed in \cite{GSS} to simplify the calculation of the Ext algebra. We show that we may assume, without any loss of generality, that our quantized Brauer graph $(\Gamma,\mathfrak o,\mm,\mq)$ has $\mm\equiv 1$ and contains no loops or multiple edges. We then discuss the quantizing function $\mq$, proving in Proposition~\ref{prop:reduction of quantizing function}, that if the field $K$ is algebraically closed and if the associated Brauer graph algebra $\Ag$ is length graded, then the number of generators of the Ext algebra $E({\Ag})$ and their degrees do not depend on $\mq.$

For a finite dimensional indecomposable algebra $\Lambda=K{\mathcal Q}/I$ defined by a finite quiver ${\mathcal Q}$ and an admissible ideal $I$, let $J$ be the ideal in $K{\mathcal Q}$ which is generated by the arrows of ${\mathcal Q}$ and let $\br$ denote the Jacobson radical $J/I$ of $\Lambda$. The Ext algebra (or cohomology ring) of $\Lambda$ is given by $$E(\Lambda) = \Ext^*_\Lambda(\Lambda/\br,\Lambda/\br) = \oplus_{n \geqslant0}\Ext^n_\Lambda(\Lambda/\br,\Lambda/\br)$$ with the Yoneda product. If the ideal $I$ is generated by length homogeneous elements, then the length grading of
$K{\mathcal Q}$ induces a grading $\Lambda = \Lambda_0 \oplus \Lambda_1 \oplus
\Lambda_2 \oplus \cdots $, where $\Lambda_0$ is the $K$-space spanned by the
vertices of ${\mathcal Q}$. The graded Jacobson radical of $\Lambda$ is $\br =
\Lambda_1 \oplus \Lambda_2 \oplus \cdots $, and $\Lambda_0 \cong \Lambda/\br$.

Let $(\Gamma,\mathfrak o,\mm,\mq)$ be a quantized Brauer graph and let
$\cA_{\Gamma}$ denote the associated Brauer graph algebra.
We recall the following definitions from \cite{GSS}. For each $\alpha \in \Gamma_0$, we define $\cZ_\alpha$ to be the set $\cZ_\alpha = \{(s,t) \mid s, t \in \Gamma_1, t \mbox{ is the successor of $s$ at vertex $\alpha$}\}.$ Let $\cZ_\Gamma$ be the disjoint union
$$\cZ_\Gamma = \bigcup^{\bullet}_{\alpha \in \Gamma_0}\cZ_\alpha.$$ Let $G$ be a finite abelian group. A set function $W\colon \cZ_\Gamma \to G$ is called a {\em successor weighting} of the Brauer graph $(\Gamma, \mo, \mm,\mq)$. For $\alpha \in \Gamma_0$ we define the {\em order of $\alpha$}, denoted $\ord(\alpha)$, to be the order in $G$ of the element $\omega_\alpha = \prod_{(s,t) \in \cZ_\alpha} W(s,t).$ A successor weighting $W\colon \cZ_{\Gamma}\to G$ of the Brauer graph $(\Gamma, \mo, \mm,\mq)$ is called a {\em Brauer weighting} if $\ord(\alpha)\mid \mm(\alpha)$ for all $\alpha\in\Gamma_0$.

Let $W\colon \cZ_{\Gamma}\to G$ be a
Brauer weighting, and let $\Agw$ be the Brauer graph algebra
associated to the quantized Brauer covering graph $(\Gamma_W,\mathfrak o_W,\mm_W,\mq_W)$.
Let $\br_{\cA_{\Gamma}}$ (respectively, $\br_{\Agw})$ be the
Jacobson radical of $\Ag$ (respectively, $\Agw$). By \cite[Theorem
5.3]{GSS}, $\Agw$ is the covering algebra associated to a weight
function $W^*\colon(\cQ_{\Gamma})_1\to G$. Hence the Ext algebra
$\Ext^*_{\Ag}(\Ag/\br_{\Ag},\Ag/\br_{\Ag})$ is generated in degrees
$d_1,\dots,d_s$ if and only if the Ext algebra
$\Ext^*_{\Agw}(\Agw/\br_{\Agw},\Agw/\br_{\Agw})$ is generated in
degrees $d_1,\dots,d_s$.  In fact, we have the following well-known
result.

\begin{prop}\label{prop:ext-struct} Keeping the notation above, the ring structure of
$\Ext^*_{\Ag}(\Ag/\br_{\Ag},\Ag/\br_{\Ag})$ can be completely
determined from the ring structure of
$\Ext^*_{\Agw}(\Agw/\br_{\Agw},\Agw/\br_{\Agw})$.
\end{prop}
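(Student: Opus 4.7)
The plan is to apply the standard covering theory for length-graded algebras to the covering $\Agw\to\Ag$. By \cite[Theorem~4.3]{GSS}, the weight function $W^*\colon(\cQ_\Gamma)_1\to G$ endows $\Ag$ with a $G$-grading in which each arrow $a$ has degree $W^*(a)$, and $\Agw$ is constructed so that the ``push-down'' functor from the category of $\Agw$-modules to the category of $\Ag$-modules is exact and preserves indecomposable projectives. In this setup the simple $\Agw$-modules are naturally indexed by pairs $(v,g)$ with $v\in(\cQ_\Gamma)_0$ and $g\in G$, and each $S_{v,g}$ lies above the simple $\Ag$-module $S_v$.

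First I would lift a minimal graded projective resolution of $S_{v,1}$ over $\Agw$ and push it down to $\Ag$; by general covering theory this produces a minimal graded projective resolution of $S_v$ over $\Ag$. Applying $\Hom_\Ag(-,S_w)$ and decomposing by the $G$-grading yields the identification
\[
\Ext^n_\Ag(S_v,S_w)\;\cong\;\bigoplus_{g\in G}\Ext^n_\Agw(S_{v,1},S_{w,g})
\]
for all $v,w\in(\cQ_\Gamma)_0$ and $n\geqslant 0$. Summing over all $v,w$ recovers the underlying graded vector space of $\Ext^*_\Ag(\Ag/\br_\Ag,\Ag/\br_\Ag)$ from the components of $\Ext^*_\Agw(\Agw/\br_\Agw,\Agw/\br_\Agw)$.

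To recover the ring structure, I would then verify that the Yoneda product respects the covering: splicing of extensions commutes with push-down, so the product of classes in $\Ext^n_\Agw(S_{v,1},S_{w,g})$ and $\Ext^m_\Agw(S_{w,g},S_{u,gh})$ matches the Yoneda product in $\Ext^{n+m}_\Ag(S_v,S_u)$ of their images. Hence $\Ext^*_\Ag(\Ag/\br_\Ag,\Ag/\br_\Ag)$ is determined as a ring by the $G$-graded algebra $\Ext^*_\Agw(\Agw/\br_\Agw,\Agw/\br_\Agw)$ together with the group operation on $G$. The main step is largely notational: checking that the push-down respects Yoneda composition, which is a formal property of Galois coverings and is the reason the authors flag this result as well known. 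The proof therefore reduces to invoking the covering-theoretic framework (as developed in \cite{GSS} and references therein) and recording the decomposition above.
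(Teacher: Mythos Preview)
Your proposal is correct and follows essentially the same route as the paper: both arguments rest on the equivalence between $\Agw$-modules and $G$-graded $\Ag$-modules to obtain the decomposition $\Ext^n_{\Ag}(S_v,S_w)\cong\bigoplus_{g\in G}\Ext^n_{\Agw}(S_{v,1},S_{w,g})$ and then observe that Yoneda products are compatible with this splitting. The only cosmetic difference is that the paper phrases the argument via graded Ext-groups and the shift functor $\sigma(g)$ on $G$-graded $\Ag$-modules, whereas you phrase it via the push-down functor; under the standard covering equivalence these are the same computation.
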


\begin{proof} The $G$-grading on $\Ag$ induced by the weight function
$W^*\colon(\cQ_{\Gamma})_1\to G$ induces a $G$-grading on
$\Ext^*_{\Ag}(\Ag/\br_{\Ag},\Ag/\br_{\Ag})$ such that, if $g\in G$,
and $S$ and $T$ are simple $\Ag$-modules then
\[\Ext^n_{\Ag}(S,T)_g
= \Ext^n_{\Gr}(S,\sigma(g)T),\]
where the right hand side is the graded Ext-group, $S$ and $T$ are viewed as graded
modules with support in degree $1_G$, and $\sigma$ is the shift
functor.  The graded Yoneda product
$\Ext^n_{\Gr}(T,\sigma(g)U)\times \Ext^n_{\Gr}(S,\sigma(h)T)$ is
defined in the usual way after noting that
$\Ext^n_{\Gr}(T,\sigma(g)U)\cong \Ext^n_{\Gr}(\sigma(h)T,
\sigma(h)\sigma(g)U)$.  Finally, using that the category of
$G$-graded $\Ag$-modules is equivalent to the category of
$\Agw$-modules we obtain the desired result.
\end{proof}

Now, in \cite[Theorem 7.7]{GSS}, it was shown, for any quantized Brauer
graph $(\Gamma_0,\mo_0,\mm_0,\mq_0)$, that there is a tower of quantized Brauer covering graphs
$(\Gamma_0,\mo_0,\mm_0,\mq_0), (\Gamma_1,\mo_1,\mm_1,\mq_1)$, $(\Gamma_2,\mo_2,\mm_2,\mq_2), (\Gamma_3,\mo_3,\mm_3,\mq_3)$
such that the topmost quantized Brauer covering graph
$(\Gamma_3,\mo_3,\mm_3,\mq_3)$ has multiplicity function $\mm_3$ identically one, and
the graph $\Gamma_3$ has no loops or multiple edges.

Hence, with Proposition~\ref{prop:ext-struct}, we
may assume that $(\Gamma,\mathfrak o,\mm,\mq)$ is a quantized Brauer graph with $\mm\equiv 1$ and with no loops or multiple edges.

\bigskip

We now consider the quantizing function $\mq$ in the case where $\Ag$ is length graded. It is known that if the field is algebraically closed and if either the Brauer graph is a tree or the Brauer graph algebra is
symmetric, then we can always rescale the arrows so that $\mq\equiv 1$. The next result shows that we may also assume that $\mq\equiv 1$ in the case where $\Ag$ is length graded, since the number of generators of the Ext algebra $E(\Ag)$ and their degrees do not depend on $\mq$.

We begin by introducing some additional notation. If edge $t$ is the successor of edge
$s$ in $\Gamma$ at vertex $\alpha$, we denote the corresponding
arrow in $\cQ_{\Gamma}$ from vertex $v_s$ to vertex $v_t$
by $a(s,t,\alpha)$. In fact, this arrow is uniquely determined by $s$ and $t$.
For, suppose there are vertices $\alpha$ and $\beta$
in $\Gamma$ such that $a(s,t,\alpha)$ and $a(s,t,\beta)$ are arrows.
Since there are no loops in $\Gamma$, we have $s\ne t$. If $\alpha\ne \beta$, then
$s$ and $t$ are distinct edges with
endpoints $\alpha$ and $\beta$, contradicting the assumption that
there are no multiple edges in $\Gamma$. Hence $\alpha = \beta$. So
if edge $t$ is the successor of edge
$s$ at vertex $\alpha$ in $\Gamma$, then the vertex $\alpha$ is
unique. Thus we denote the arrow in $\cQ_{\Gamma}$ from vertex $v_s$
to vertex $v_t$ simply by $a(s,t)$.

\begin{prop}\label{prop:reduction of quantizing function}
Let $K$ be an algebraically closed field and let $(\Gamma,\mo,\mm,\mq)$ be a
quantized Brauer graph with $\mm\equiv 1$ and with
no loops or multiple edges.  Let $\Ag$ be the associated Brauer
graph algebra. Suppose that $\Ag$ is length graded. Then the number of generators of
$\Ext_{\Ag}(\Ag/\br_{\Ag},\Ag/\br_{\Ag})$ and their degrees do not depend on $\mq.$
\end{prop}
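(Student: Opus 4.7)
The plan is to show that both the graded dimensions of $\Ext^n_\Ag(\Ag/\br_\Ag,\Ag/\br_\Ag)$ and the number of multiplicative generators of $E(\Ag)$ in each degree depend only on $(\Gamma,\mo,\mm)$, not on $\mq$. Since $\Ag$ is length graded, every type one relation $\mq_{s,\alpha}C_{s,\alpha}^{\mm(\alpha)}-\mq_{s,\beta}C_{s,\beta}^{\mm(\beta)}$ is length homogeneous, which forces $\val(\alpha)\mm(\alpha)=\val(\beta)\mm(\beta)$ whenever $s$ is non-truncated with endpoints $\alpha,\beta$; in particular the two cycles appearing in a type one relation have the same length, regardless of the values of $\mq$.

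For the first, dimension-only half, I would fix a length-compatible admissible order on paths in $K\Qg$. The tip of each type one relation is whichever of $C_{s,\alpha}^{\mm(\alpha)}$ or $C_{s,\beta}^{\mm(\beta)}$ is larger in that order, a choice that depends only on $\Qg$ and the chosen order, not on the nonzero scalars $\mq_{s,\alpha},\mq_{s,\beta}$. Since relations of types two and three are already monomial, the tip ideal $\operatorname{Tip}(\Ig)$ depends only on $(\Gamma,\mo,\mm)$. By Gröbner basis theory for path algebras, $\Ag$ then admits a basis of normal paths independent of $\mq$, and hence has the same graded Hilbert series as the monomial algebra $K\Qg/\langle\operatorname{Tip}(\Ig)\rangle$. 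Invoking Anick's resolution, the ranks and internal degrees of the terms in a minimal graded projective resolution of $\Ag/\br_\Ag$ are computed from the Anick chains built out of $\operatorname{Tip}(\Ig)$; this immediately yields $\mq$-independence of $\dim_K\Ext^n_\Ag(\Ag/\br,\Ag/\br)_d$ for all $n,d$.

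For the count of multiplicative generators, the Yoneda product of $E(\Ag)$ genuinely involves the coefficients $\mq_{s,\alpha}$, so the Ext algebras at different $\mq$ need not be isomorphic. The strategy is to use the structural description of $\Ag$ as a special biserial algebra: in the later sections of the paper the syzygies of the simple $\Ag$-modules are described explicitly as direct sums of string modules whose underlying strings are combinatorial objects on $(\Gamma,\mo,\mm)$ that do not reference $\mq$. A multiplicative generator of $E(\Ag)$ in degree $n$ corresponds to a string in the $n$th syzygy not obtainable by concatenation of lower-degree string generators, and this condition is again purely combinatorial, depending only on $(\Gamma,\mo,\mm)$. This pins down both the number of new generators in degree $n$ and their internal degrees as integers independent of $\mq$.

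The main obstacle is precisely this second step, the passage from $\mq$-invariance of graded $\Ext$ dimensions to $\mq$-invariance of the count of algebra generators. A naive upper-semicontinuity argument on the flat family $\mq\mapsto\cA_\Gamma^{(\mq)}$ in $(K^*)^{|\cX_\Gamma|}$ only shows that the number of generators is minimised generically, not that it is constant on the whole parameter space; so the Gröbner/Anick input is not by itself enough. Supplying a manifestly $\mq$-free combinatorial identification of the generators, via the string-module analysis forthcoming in the paper, is therefore the essential ingredient and the right way around this obstacle.
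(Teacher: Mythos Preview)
Your proposal has genuine gaps in both halves, and the paper's route is entirely different.

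For the first half, Anick's resolution built from $\operatorname{Tip}(I_\Gamma)$ is a free resolution of $\Ag/\br$, but it is \emph{not} minimal in general for a non-monomial ideal; its terms only give upper bounds on $\dim_K\Ext^n$. So the equality of Hilbert series between $\Ag$ and $K\Qg/\langle\operatorname{Tip}(I_\Gamma)\rangle$ does not by itself pin down the graded Betti numbers of $\Ag/\br$. You would need an extra argument (e.g.\ that the Anick differential has entries in $\br$) specific to these algebras, and you do not supply one.

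For the second half, your plan is circular in the context of this paper. The string-module syzygy analysis in the later sections is carried out \emph{after} the reduction to $\mq\equiv 1$; it is not proved there for arbitrary $\mq$. More importantly, even granting that the syzygies are described by the same strings for all $\mq$, the Yoneda product is a genuine composition of maps whose nonvanishing could in principle depend on the scalars. Your assertion that ``not obtainable by concatenation'' is purely combinatorial is exactly what needs proof, and you do not prove it. You yourself flag this as the obstacle, but deferring to ``forthcoming'' analysis is not a proof.

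The paper avoids all of this by a direct algebraic trick: it shows that for any two quantizing functions the corresponding Brauer graph algebras differ by a \emph{Zhang twist} by a graded automorphism $\sigma$ of the $\mq\equiv 1$ algebra $A$. Concretely, one changes a single type-one relation $C_{s,\alpha}-C_{s,\beta}$ into $rC_{s,\alpha}-C_{s,\beta}$ by rescaling the arrows $a_i$ in $C_{s,\alpha}$ via $\sigma(a_i)=r_ia_i$; solving a small system (using that $K$ is algebraically closed to extract a $\tfrac{v^2(v-1)}{2}$-th root of $r$ when $\val(\alpha)=v$) achieves this while fixing all other relations. Iterating over relations realises any $\mq$. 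One then invokes the Cassidy--Shelton result that $E(A^\sigma)$ is a twist of $E(A)$, and twisting manifestly preserves the number of algebra generators and their degrees. This bypasses entirely the need to analyse syzygies or Yoneda products for general $\mq$.
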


\begin{proof}
Let $(\Gamma,\mo,\mm)$ be a Brauer graph with $\mm\equiv 1$ and with no loops or multiple edges. We may assume that $\Gamma$ is not a
star ($\aa_2$ included) since the associated Brauer graph algebras are all
monomial and hence do not depend on a quantizing function $\mq$. Therefore there exists an edge $s$ with endpoints $\alpha$ and $\beta$
such that $v:=\val(\alpha)>1$ and $\val(\beta)>1.$

Let $A$ be the Brauer graph algebra associated to $(\Gamma,\mo,\mm)$ with
quantizing function identically $1$. We shall twist $A$ by a graded algebra automorphism $\sigma$ of $A$ so that
$A^\sigma$ is the Brauer graph algebra associated to the quantized Brauer graph $(\Gamma,\mo,\mm,\mq)$ with
$\mq$ equal to $1$ except at $(s,\alpha)$ and $(s,\beta)$, then use \cite{CS} to see that the Ext
algebras of $A$ and $A^\sigma$ have the same number of generators in the same degrees. This means
that we will have changed precisely one relation in the generating set for $\Ig$,
namely $C_{s,\alpha}-C_{s,\beta}$ will become
$\mq_{s,\alpha}C_{s,\alpha}-\mq_{s,\beta}C_{s,\beta}$ or, to simplify notation,
$rC_{s,\alpha}-C_{s,\beta}$ where $r=\mq_{s,\alpha}\mq_{s,\beta}^{-1}.$ None of the
other relations will change. 

Recall that the product in $A^{\sigma}$ is given by $x\cdot
y=x\sigma^{\ell(x)}(y)$ where  $x$ and $y$ are length homogeneous elements in $A$ and $\ell(x)$ is the length of $x.$

Let $s=s_0,s_1,s_2,\ldots,s_{v-1}$ be the successor sequence of $s$ at $\alpha$,
and let $a_i=a(s_i,s_{i+1})$ be the corresponding arrows in the quiver $\Qg$, for
$i=0,1,\ldots,v-1$ (where $s_v=s$). In this notation, $C_{s,\alpha}=a_0a_1\cdots
a_{v-1}.$ We shall define a graded algebra automorphism $\sigma$ of $A$ by
setting, for $i=0,1,\ldots,v-1$, $\sigma(a_i)=r_ia_i$ for some $r_i\in K\backslash\set{0}$ to be determined, and fixing all
other arrows and vertices in $\Qg$. Suppose we have such an automorphism
$\sigma.$ Then, in
$A^\sigma$, the cycle $C_{s,\alpha}$ becomes
$r_1r_2^2\cdots r_{v-1}^{v-1}a_0a_1\cdots a_{v-1}.$ The arrows
$a_0,\ldots,a_{v-1}$ occur in at most $v$ relations of type one, involving
the cyclic permutations of $C_{s,\alpha}$. Therefore we want
\begin{align*}
&r_1r_2^2\cdots r_{v-2}^{v-2} r_{v-1}^{v-1}a_0a_1\cdots a_{v-2} a_{v-1}=ra_0a_1\cdots a_{v-2} a_{v-1}\\
&r_2r_3^2\cdots r_{v-1}^{v-2}r_0^{v-1}a_1a_2\cdots a_{v-1}a_0=a_1a_2\cdots
a_{v-1}a_0\\ &\vdots\\
&r_0r_1^2\cdots r_{v-3}^{v-2}r_{v-2}^{v-1}a_{v-1}a_0a_1\cdots a_{v-2}=a_{v-1}a_0a_1\cdots
a_{v-2},
\end{align*} so we must solve the system
\begin{align*}
&r_1r_2^2\cdots r_{v-2}^{v-2} r_{v-1}^{v-1}=r\\
&r_2r_3^2\cdots r_{v-1}^{v-2}r_0^{v-1}=1\\ &\vdots\\
&r_0r_1^2\cdots r_{v-3}^{v-2}r_{v-2}^{v-1}=1.
\end{align*}

If $v=2,$ the system is immediately solved: $r_1=r$ and $r_0=1.$ If $v=3$, it is
easy to see that $r_0=r_1^{-2},$ $r_2=r_1^4$ and $r_1^9=r$ so that choosing a
$9$-th root of $r$ for $r_1$ defines $\sigma.$ Now suppose
that $v>3.$
Starting with the last equation, we can express $r_0$ in terms of $r_1, \ldots, r_{v-2}$
and then $r_{v-1}$ also in terms of $r_1, \ldots, r_{v-2}$.
At the next stage, $r_{v-2}$ may be written in terms of $r_1, \ldots, r_{v-3}$,
so that $r_0$ and $r_{v-1}$ may also be written in terms of $r_1, \ldots, r_{v-3}$.
Continuing in this way, we see that
$r_1,\ldots,r_{v-2}$ must be equal up to $v$-th roots of unity so that
\[ r_2=\zeta_2 r_1,\ r_3=\zeta_3r_1,\ \ldots, r_{v-2}=\zeta_{v-2}r_1 \]
for some  $v$-th roots of unity $\zeta_2,\ldots,\zeta_{v-2}.$  We then have
$r_0=\zeta_2^{-3}\cdots\zeta_{v-2}^{-(v-1)}r_1^{-\frac{(v+1)(v-2)}{2}}$ and
$r_{v-1}=\zeta_2^{2}\cdots\zeta_{v-2}^{(v-2)}r_1^{(v+1)(v-2)-\frac{(v+2)(v-3)}{2}}$. Therefore \[r=r_1r_2^2\cdots r_{v-1}^{v-1}=\zeta_2^{2}\cdots\zeta_{v-2}^{(v-2)}(\zeta_2^{2}\cdots\zeta_{v-2}^{(v-2)})^{v-1}r_1^{\varphi(v)}=r_1^{\varphi(v)}\] where
$\varphi(v)=\frac{v^{2}(v-1)}{2}.$ Choosing a $\varphi(v)$-th root of $r$ for
$r_1$ and $\zeta_2=\cdots=\zeta_{v-2}=1$ defines an automorphism $\sigma$ as required.
Note that $\varphi(3)=9$, so that we have defined the same automorphism $\sigma$
in the case $v=3$.

We now use \cite{CS}, where the authors show that the Ext algebra of
$A^\sigma$ is
obtained from the Ext algebra of $A$ by twisting (they consider a
connected graded algebra, but the proof and result are easily adapted to a
quotient of a path algebra by a length homogeneous ideal). Twisting
does not change the number of generators of the Ext algebra or their degrees.

Proceeding in this way for each relation of type one, we see
that the number of generators of the Ext algebra and their degrees do not
depend on $\mq.$
\end{proof}

As a corollary of the proof, we may relax the condition that $K$ is
algebraically closed.

\begin{cor}\label{cor:reduction of quantizing function}
Let $K$ be a field and let $(\Gamma,\mo,\mm,\mq)$ be a
quantized Brauer graph with $\mm\equiv 1$ and with
no loops or multiple edges.  Let  $\Ag$ be the associated Brauer
graph algebra. We assume that one of the following conditions holds:
\begin{enumerate}[(i)]
\item the valency of every vertex in $\Gamma$ is at most two, or
\item there is an integer $k$ such that the valency of every vertex in $\Gamma$ is either $1$ or
  $k$ and the field $K$ contains a root of the polynomial
 $X^{k^2(k-1)/2}-r$  for any  $r\in K.$
\end{enumerate}  Then the number of generators of
$\Ext_{\Ag}(\Ag/\br_{\Ag},\Ag/\br_{\Ag})$ and their degrees do not depend on $\mq.$
\end{cor}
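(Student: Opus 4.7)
The plan is to reuse the proof of Proposition~\ref{prop:reduction of quantizing function} essentially verbatim, observing that algebraic closure of $K$ was invoked there only in order to extract one specific root when defining the twisting automorphism $\sigma$. More precisely, for each edge $s$ that is not truncated at either endpoint, that proof picks one endpoint $\alpha$ and rescales the arrows $a_0,\ldots,a_{v-1}$ of the cycle $C_{s,\alpha}$ (with $v=\val(\alpha)$) by scalars $r_0,\ldots,r_{v-1}\in K\setminus\{0\}$ satisfying a system of $v$ multiplicative equations. This system admits a solution in $K$ as soon as one can extract a $\varphi(v)$-th root in $K$ of the single element $r=\mq_{s,\alpha}\mq_{s,\beta}^{-1}$, where $\varphi(2)=1$ (no root needed, by the explicit solution $r_1=r$, $r_0=1$) and $\varphi(v)=\frac{v^2(v-1)}{2}$ for $v\pgq 3$.

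I then verify that in each of the cases (i) and (ii) the required roots are available. Since $\mm\equiv 1$, any edge with an endpoint of valency $1$ is truncated there and so contributes no relation of type one; hence only edges both of whose endpoints have valency at least $2$ need to be considered. Under hypothesis~(i), every such edge has both endpoints of valency exactly $2$, so $v=2$ and the construction goes through over an arbitrary field. Under hypothesis~(ii), every such edge has both endpoints of valency $k$, so one may choose either endpoint as $\alpha$ and take $v=k$; the hypothesis that $X^{k^2(k-1)/2}-r$ has a root in $K$ for every $r\in K$ is exactly the existence of the $\varphi(k)$-th roots required to define $\sigma$.

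With this in place, the remainder of the argument is identical to that of Proposition~\ref{prop:reduction of quantizing function}: twisting $\Ag$ (with $\mq\equiv 1$) by the resulting graded algebra automorphism $\sigma$ modifies only the chosen relation of type one associated to $s$, and by \cite{CS} this twist preserves both the number of generators of the Ext algebra and their degrees. Iterating the construction over all edges of $\Gamma$ that are not truncated at either endpoint moves us from an arbitrary quantizing function $\mq$ to the constant function $1$ without altering $E(\Ag)$ as a graded generated object, proving the corollary. I do not anticipate any genuine obstacle, since the entire content of the corollary is the replacement of the algebraic closure hypothesis by precisely the field-theoretic condition extracted from the proof of the proposition; the only point requiring a little care is the mild discrepancy at $v=2$, where the naive formula $\varphi(v)=v^2(v-1)/2$ overstates the field-theoretic requirement, and this is absorbed entirely by hypothesis~(i).
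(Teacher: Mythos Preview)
Your proposal is correct and follows exactly the approach the paper takes: relax the algebraic-closure hypothesis of Proposition~\ref{prop:reduction of quantizing function} to the precise field-theoretic condition needed to extract the $\varphi(v)$-th roots, and then rerun that proof verbatim. The one point the paper makes explicit and you leave implicit is that hypotheses (i) and (ii) force $\Ag$ to be length graded (a standing assumption in Proposition~\ref{prop:reduction of quantizing function}, needed both to define the twist $A^\sigma$ via $x\cdot y=x\sigma^{\ell(x)}(y)$ and to invoke \cite{CS}); your observation that every nontruncated edge has both endpoints of the same valency ($2$ in case~(i), $k$ in case~(ii)) is exactly what makes the type-one relations length homogeneous, so you have the content but should state the conclusion.
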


\begin{proof}
In both cases, the ideal $\Ig$ is length homogeneous, and hence $\Ag$ is length graded. It then
follows from the proof of Proposition \ref{prop:reduction of quantizing function} that the result holds.
\end{proof}

From now on, we assume that $\mq \equiv 1$, and write $(\Gamma,\mathfrak o,\mm)$ for a Brauer graph with $\mq \equiv 1$. We assume that $(\Gamma,\mathfrak o,\mm)$ is a Brauer graph with no loops or multiple
edges and $\mm\equiv 1$. The next sections describe the structure of certain classes of modules of a Brauer graph algebra.

\section{Structure of indecomposable modules}

Let $(\Gamma,\mathfrak o,\mm)$ be a Brauer graph with no loops or multiple
edges and $\mm\equiv 1$. Let $\Ag$ denote the associated Brauer graph
algebra. We assume $\mq \equiv 1$.
The following result is used in Sections~\ref{sec:unis} and \ref{sec:string} where we determine the structure of uniserial and string
$\Ag$-modules.

\begin{prop}\label{prop:proj-struct}
Let $(\Gamma,\mathfrak o,\mm)$ with $\mm\equiv 1$ be a Brauer graph
with no loops or multiple edges
and let $\cA_{\Gamma}$ denote the associated Brauer graph algebra.
Let $S$ and $T$ be simple $\Ag$-modules associated to the edges $s$
and $t$ in $\Gamma$.
\begin{enumerate}[\bf (1)]
\item If $S\not\cong T$, then  $\dim_K(\Hom_{\Ag}(P_S,P_T))\leqslant 1$.
\item If $S\cong T$, then $\dim_K(\Hom_{\Ag}(P_S,P_T)) = 2$.
\item We have that $\dim_K(\Hom_{\Ag}(P_S,P_T)) = 1$ if, and only if,
$S\not\cong T$ and there is a uniserial module with top $S$ and
socle $T$. In this case, the uniserial module is unique up to
isomorphism.
\end{enumerate}
\end{prop}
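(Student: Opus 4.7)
The plan is to invoke the standard natural isomorphism $\Hom_{\Ag}(P_S,M)\cong Mv_s$ for any right $\Ag$-module $M$, which identifies $\dim_K\Hom_{\Ag}(P_S,P_T)$ with the composition multiplicity $[P_T:S]$. Parts {\bf (1)} and {\bf (2)} then reduce to reading off this multiplicity from the composition series of $P_T$ recalled at the end of Section~\ref{sec:notation}.

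For {\bf (1)} and {\bf (2)}, I would split into cases. If $t$ is not truncated with endpoints $\alpha$ and $\beta$, then $T$ appears as both the top and the socle of $P_T$, contributing $2$ to $[P_T:T]$, while in the middle layer $\rad P_T/\soc P_T\cong U\oplus V$ each of $S_1,\ldots,S_{\val(\alpha)-1}$ and $T_1,\ldots,T_{\val(\beta)-1}$ appears exactly once (as $\mm\equiv 1$). Since $\Gamma$ has no loops, none of these simples equals $T$; and since $\Gamma$ has no multiple edges, no $S_i$ coincides with any $T_j$ (such a common edge would join $\alpha$ and $\beta$, forcing it to equal $t$). The truncated case is analogous, with $P_T$ uniserial of composition series $T,S_1,\ldots,S_{\val(\alpha)-1},T$. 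In both cases $[P_T:T]=2$ and $[P_T:S]\ppq 1$ whenever $S\not\cong T$, which establishes {\bf (1)} and {\bf (2)}.

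For the existence half of {\bf (3)}, assume $S\not\cong T$ and $[P_T:S]=1$. When $t$ is not truncated, exactly one of the two uniserial summands $U,V$ of $\rad P_T/\soc P_T$ has $S$ as a composition factor; I would lift this summand to a submodule $\widetilde{U}\subseteq P_T$ by taking its preimage under the canonical surjection $\rad P_T\to \rad P_T/\soc P_T$. Since $\Ag$ is weakly symmetric, $\soc P_T=T$ is simple, so $\soc\widetilde{U}=T$; combined with $\widetilde{U}/T$ being uniserial, $\widetilde{U}$ itself is uniserial with composition series terminating in $T$, and the unique submodule of $\widetilde{U}$ with top $S$ is uniserial with top $S$ and socle $T$. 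When $t$ is truncated, $P_T$ itself is uniserial and one argues directly inside $P_T$. For the converse, any uniserial $M$ with top $S$ and socle $T$ is a quotient of $P_S$ and, since $P_T\cong I_T$ by weak symmetry, embeds in $P_T$; the composite $P_S\twoheadrightarrow M\mto P_T$ is then a nonzero element of $\Hom_{\Ag}(P_S,P_T)$, forcing the dimension to equal $1$ by {\bf (1)}.

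Uniqueness is formal: two uniserial modules $M,M'$ with top $S$ and socle $T$ yield two nonzero maps $P_S\to P_T$ which, being in a one-dimensional space, are proportional; their images in $P_T$ therefore coincide and are isomorphic to both $M$ and $M'$, so $M\cong M'$. The main subtlety, and the step where the standing hypotheses that $\Gamma$ has no loops, no multiple edges, and $\mm\equiv 1$ are used, is the uniform bound $[P_T:S]\ppq 1$ for $S\not\cong T$: without these hypotheses, repetitions in the successor sequences at $\alpha$ or $\beta$ would inflate $[P_T:S]$ and hence $\dim_K\Hom_{\Ag}(P_S,P_T)$.
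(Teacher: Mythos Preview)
Your proof is correct and follows essentially the same approach as the paper's. Both arguments rest on the identification of $\dim_K\Hom_{\Ag}(P_S,P_T)$ with the composition multiplicity $[P_T:S]$ and then use the no-loops, no-multiple-edges, and $\mm\equiv 1$ hypotheses to bound this multiplicity; you state the isomorphism $\Hom_{\Ag}(P_S,M)\cong Mv_s$ explicitly at the outset, whereas the paper invokes it implicitly inside a contradiction argument, and your uniqueness argument in {\bf (3)} via proportionality of maps is a minor rephrasing of the paper's observation that $V_1\not\cong V_2$ would force $[P_T:S]\pgq 2$.
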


\begin{proof}
\begin{enumerate}[\bf (1)]
\item Suppose that $S\not\cong T$ and assume for contradiction
that $\dim_K(\Hom_{\Ag}(P_S,P_T))\geqslant 2$.  Denote the endpoints of
edge $t$ in $\Gamma$ by $\alpha$ and $\beta$.  By our no loops
assumption, $\alpha\ne \beta$.
Since $\Ag$ is a
special biserial selfinjective algebra, $\rad(P_T)/\soc(P_T)$ is
either a uniserial module $U$ or a direct sum of two uniserial
modules $L_1\oplus L_2$.  Since $\dim_K(\Hom_{\Ag}(P_S,P_T))\geqslant 2$ and $\soc(P_T) = T$,
the simple $S$ occurs at least twice as a composition factor of $\rad(P_T)/\soc(P_T)$. If $S$ occurs as a composition factor of either $U$
or one of the $L_i$'s at least two times, then either $\mm(\alpha)\geqslant 2$,
$\mm(\beta)\geqslant 2$ or $s$ is a loop, which all contradict our
hypothesis. On the other hand, suppose that $S$ occurs as a
composition factor of both $L_1$ and $L_2$.  Then $s$ occurs in the
successor sequences of $t$ at both vertices $\alpha$ and $\beta$.
Hence, $s$ also has endpoints $\alpha$ and $\beta$.  But then $s$
and $t$ are distinct edges between $\alpha$ and $\beta$, which
contradicts the hypothesis that there are no multiple edges.
Thus $\dim_K(\Hom_{\Ag}(P_S,P_T))\leqslant 1$.

\item This is proved by a similar argument to that in {\bf (1)}.

\item First assume that $S\not\cong T$ and there is a uniserial module $V$
having top $S$ and socle $T$.  Then $V$ embeds in the injective
module $P_T$. Hence $S$ must be a composition factor of $P_T$. Then
$\dim_K(\Hom_{\Ag}(P_S,P_T))=1$ by {\bf (1)}. Next suppose that
$\dim_K(\Hom_{\Ag}(P_S,P_T))=1$.  By {\bf (2)}, $S\not\cong T$. If
$P_T$ is uniserial, then it follows that there is a uniserial
submodule of $P_T$ with top $S$ and socle $T$ since $S$ is a
composition factor of $P_T$. Otherwise, we may suppose that $\rad(P_T)/\soc(P_T)=L_1\oplus
L_2$, and, by assumption, $S$ is a composition factor of exactly one of
$L_1$ or $L_2$. So suppose that $S$ is a composition factor of $L_1$ and
$g$ is the composition of the canonical surjections $\rad(P_T)\to \rad(P_T)/T$
and $\rad(P_T)/T\to L_1$.  Let $V=g^{-1}(L_1)$. Then $V$ is a
uniserial submodule of $P_T$ having $S$ as a composition factor.
Hence, there is a uniserial module with top $S$ and socle $T$.

It remains to show that if $V_1$ and  $V_2$ are uniserial modules
with top $S$ and socle $T$, then $V_1\cong V_2$.  Note that $V_1$
and $V_2$ both embed in $P_T$.  If $V_1\not\cong V_2$ then $S$ would
occur at least twice as a composition factor of $P_T$, contradicting
{\bf (1)}. This completes the proof.
\end{enumerate}
\end{proof}

\begin{cor}\label{cor:proj-im} Let $(\Gamma,\mathfrak o,\mm)$ with $\mm\equiv 1$ be a Brauer graph
with no loops or multiple edges
and let $\cA_{\Gamma}$ denote the associated Brauer graph algebra.
Let $S$ and $T$ be simple $\Ag$-modules associated to the edges $s$
and $t$ in $\Gamma$. Then we have the following.
\begin{enumerate}[\bf (1)]
\item If $f,g\colon P_S\to \rad(P_T)$ are nonzero morphisms, then $\Im(f)=\Im(g)$.
\item There are only a finite number of submodules of an indecomposable projective $\Ag$-module.
\item A submodule $M$ of an indecomposable projective $\Ag$-module is determined by the simple
$\Ag$-modules occurring in the top of $M$.
\item There are only a finite number of quotient modules of an
indecomposable projective $\Ag$-module.
\end{enumerate}
\end{cor}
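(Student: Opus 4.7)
The four claims cascade naturally: part \textbf{(1)} is the main tool, \textbf{(3)} uses \textbf{(1)}, and \textbf{(2)} and \textbf{(4)} are counting consequences of \textbf{(3)}.

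For \textbf{(1)}, I would identify the subspace $\{h \in \Hom_{\Ag}(P_S, P_T) : \Im h \subseteq \rad P_T\}$ with the kernel of the $K$-linear map $\Hom_{\Ag}(P_S, P_T) \to \Hom_{\Ag}(S, T)$ obtained by composing with the projection $P_T \twoheadrightarrow T$ and then factoring through $P_S \twoheadrightarrow S$ (possible since $T$ is simple). Proposition~\ref{prop:proj-struct} gives $\dim \Hom_{\Ag}(P_S, P_T) \leqslant 1$ together with $\Hom_{\Ag}(S, T) = 0$ when $S \not\cong T$, and $\dim \Hom_{\Ag}(P_S, P_T) = 2$ with $\dim \Hom_{\Ag}(S, T) = 1$ when $S \cong T$; in the latter case the map is surjective since the identity of $P_T$ is sent to the identity of $T$. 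Either way the kernel has dimension at most one, so any two nonzero maps $f, g \colon P_S \to \rad P_T$ are $K$-proportional and hence share the same image.

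For \textbf{(3)}, fix a submodule $M \subseteq P_T$. Since $P_T$ has simple top $T$, its unique maximal submodule is $\rad P_T$, and therefore either $M = P_T$ (trivially determined) or $M \subseteq \rad P_T$. In the second case, take a projective cover $\pi \colon \bigoplus_i P_{S_i}^{n_i} \twoheadrightarrow M$, where $S_1, \dots, S_r$ are the distinct simples appearing in $\Top M$. Composing with $M \hookrightarrow P_T$ produces component maps $P_{S_i} \to P_T$ that are nonzero (each $P_{S_i}$-component of a projective cover is nonzero, and $M \hookrightarrow P_T$ is injective) and land in $\rad P_T$. By part \textbf{(1)}, each such component has a well-defined image $U_i \subseteq P_T$ depending only on the pair $(S_i, T)$; in particular the multiplicities $n_i$ contribute no new information. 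Hence $M = \sum_{i=1}^r U_i$ is recovered from the set $\{S_1, \dots, S_r\}$, which proves \textbf{(3)}.

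Parts \textbf{(2)} and \textbf{(4)} follow immediately. By \textbf{(3)}, the assignment $M \mapsto \{\text{simples in } \Top M\}$ is injective from the set of submodules of $P$ into the power set of the finite set of simple $\Ag$-modules, proving \textbf{(2)}. Each quotient of $P$ has the form $P/M$ for some submodule $M$, so the number of quotient modules is bounded by the number of submodules of $P$, and \textbf{(4)} follows from \textbf{(2)}.

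The only genuine subtlety is the argument in \textbf{(3)}: one must check cleanly that $M \subsetneq P_T$ forces every component of the projective cover of $M$ to land inside $\rad P_T$, so that \textbf{(1)} applies and the multiplicities $n_i$ get absorbed into a single image $U_i$ per simple. Once this is handled, the remainder of the corollary is bookkeeping.
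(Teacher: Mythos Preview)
Your proof is correct. Part \textbf{(1)} is essentially the paper's argument, just written out in full (the paper simply cites Proposition~\ref{prop:proj-struct}\textbf{(1)} and \textbf{(2)}). For \textbf{(2)} and \textbf{(3)}, however, you take a genuinely different route. The paper argues structurally: it distinguishes whether $P_T$ is uniserial or biserial, and in the biserial case analyses where $M/\soc(P_T)$ sits inside $L_1 \oplus L_2$, treating separately the cases where it lies in one summand or straddles both. Your argument via projective covers is uniform and bypasses this case analysis entirely: once \textbf{(1)} pins down the image of each $P_{S_i} \to \rad(P_T)$, the decomposition $M = \sum_i U_i$ falls out without ever opening up $P_T$. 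Your approach is cleaner and would generalise more readily; the paper's approach has the compensating virtue of making the candidate submodules explicit (so one sees, for instance, that $\Top(M)$ has at most two summands and that they are non-isomorphic). Both proofs deduce \textbf{(4)} from \textbf{(2)} in the same way.
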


\begin{proof}
We see that {\bf (1)} follows from Proposition~\ref{prop:proj-struct}{\bf (1)} and {\bf (2)}.

To show {\bf (2)} and {\bf (3)}, let $P_T$ be an
indecomposable projective $\Ag$-module and let
$M$ be a nonprojective, nonsimple submodule of $P_T$. Then we have an inclusion
$f\colon M\to P_T$.  If $P_S$ is an indecomposable projective
$\Ag$-module and $g\colon P_S\to M$ is a module homomorphism such
that the induced map $\bar g\colon P_S\to M/\rad M$ is nonzero, then
there is a nonzero map $h = f\circ g\colon P_S\to \rad(P_T)$.  By {\bf (1)}, $\Im(h)$
is unique. Now $P_T$ is either uniserial or biserial.  If $P_T$ is
uniserial, then $M=\Im(h)$ and, by Proposition~\ref{prop:proj-struct}, both
{\bf (2)} and {\bf (3)} follow.

Now suppose that $P_T$ is biserial with $\rad(P_T)/\soc(P_T)=L_1\oplus L_2$.  By Proposition~\ref{prop:proj-struct}{\bf (1)},
we see $\Im(h)/\soc(P_T)\subseteq L_1$ or $\Im(h)/\soc(P_T)\subseteq
L_2$.  Assume, without loss of generality, that $\Im(h)/\soc(P_T)\subseteq L_1$.  If
$M/\soc(P_T)\subseteq L_1$ then $M=\Im(h)$ and there are only a
finite number of such submodules $M$.
So suppose that $M/\soc(P_T)\not\subseteq L_1$
so that $M\ne\Im(h)$. Note that $M/\soc(P_T)\not\subseteq L_2$. Then, since
$\rad(P_T)/\soc(P_T)=L_1\oplus L_2$, we have $M/\rad M\cong S\oplus S'$, for
some simple $\Ag$-module $S'$.  By Proposition~\ref{prop:proj-struct}{\bf (1)} and {\bf (2)}, $S\not\cong
S'$.  Defining $h'\colon P_{S'}\to P_T$ in a similar fashion to the
definition of $h$, we see that $\Im(h')/\soc(P_T)\subseteq L_2$ and
$M/\soc(P_T)=\Im(h)/\soc(P_T)\oplus\Im(h')/\soc(P_T)$.  Parts {\bf (2)}
and {\bf (3)} now follow.

The proof of {\bf (4)} follows from {\bf (2)}.
\end{proof}

Let $S$ be the simple $\Ag$-module associated to the edge $s$ in $\Gamma$. We remarked at the end of Section~\ref{sec:notation} that $P_S$ is uniserial if and only if $s$ is a truncated edge.
The next result is more specific on the structure of the indecomposable projective
$\mathcal{A}_\Gamma$-modules, in the case where  $\mm\equiv 1$ and $\Gamma$ has no loops or
multiple edges; its proof is straightforward and we leave it to the reader.

\begin{lemma}\label{up-low-uniser}
Let $(\Gamma, \mathfrak o,\mm)$ with $\mm\equiv 1$ be a Brauer graph
with no loops or multiple edges and let $\cA_{\Gamma}$ denote the
associated Brauer graph algebra. Assume $S$ is a simple $\Ag$-module
such that $P_S$ is biserial and $U$ is a simple $\Ag$-module such
that $P_U$ is uniserial.  If $s$ is the edge in $\Gamma$ associated
to $S$ and $s$ has endpoints $\alpha$ and $\beta$, then let
$s=s_0,s_1,s_2, \dots,s_{m-1}$ and $s=t_0,t_1,t_2,\dots,t_{n-1}$ be
the successor sequences for $s$ at vertices $\alpha$ and $\beta$
respectively. Let $S_i$ (resp. $T_i$) be the simple $\Ag$-module
associated to the edge $s_i$ (resp. $t_i$).  If $u$ is the edge in
$\Gamma$ associated to $U$ and $u$ has endpoints $\alpha'$ and
$\beta'$ with $u$ truncated at $\beta'$, then let $u=u_0,u_1,u_2,\dots,u_{p-1}$
be the successor sequence for $u$ at $\alpha'$.  Let $U_i$ be the
simple $\Ag$-module associated to the edge $u_i$.
\begin{enumerate}[\bf (1)]
\item The composition factors of $P_S$ are
$\{S,S_1,\dots,S_{m-1},T_1,\dots,T_{n-1},S\}$.
\item The composition factors of $P_U$ are
$\{U,U_1,\dots,U_{p-1},U\}$.
\item For $i=1,\dots,m-1$ and $j=1,\dots,n-1$, $S_i\not\cong T_j$.
\item For $0\leqslant i<j\leqslant m-1$, $S_i\not\cong S_j$.
\item For $0\leqslant i<j\leqslant n-1$, $T_i\not\cong T_j$.
\item For $0\leqslant i<j\leqslant p-1$, $U_i\not\cong U_j$.
\end{enumerate}
\end{lemma}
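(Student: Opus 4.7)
The proof is essentially bookkeeping: parts (1) and (2) are direct transcriptions of the general description of indecomposable projectives over a Brauer graph algebra (stated at the end of Section~\ref{sec:notation}), specialized to $\mm\equiv 1$; parts (3)--(6) are distinctness statements which reduce to combinatorial properties of the Brauer graph under our standing hypotheses.

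For parts (1) and (2), I would simply apply the description of indecomposable projective modules recalled at the end of Section~\ref{sec:notation}. Since $\mm(\alpha)=\mm(\beta)=1$, the uniserial summands $U$ and $V$ of $\rad P/\soc P$ are each a single pass through the corresponding successor sequence (minus the starting edge), giving the composition series in (1). The same description in the truncated case gives (2). No further argument is required.

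For parts (4), (5), (6), the key point is that since $\Gamma$ has no loops, the successor sequence of an edge $s$ at a vertex $\alpha$ simply enumerates the $\val(\alpha)$ edges incident with $\alpha$ in cyclic order, each appearing exactly once (the ``loops are listed twice'' provision in the definition becomes vacuous). Hence the edges $s_0,\dots,s_{m-1}$ (respectively $t_0,\dots,t_{n-1}$, respectively $u_0,\dots,u_{p-1}$) are pairwise distinct as edges of $\Gamma$, and the associated simple modules are therefore pairwise non-isomorphic.

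Part (3) is the only one that uses the ``no multiple edges'' hypothesis in an essential way. Suppose for contradiction that $S_i\cong T_j$ for some $1\leqslant i\leqslant m-1$ and $1\leqslant j\leqslant n-1$, so that $s_i=t_j$ as edges of $\Gamma$. By construction $s_i$ is incident with $\alpha$ and $t_j$ is incident with $\beta$, so this common edge has endpoints $\alpha$ and $\beta$. But $s$ also has endpoints $\alpha$ and $\beta$, and by part (4) together with $i\geqslant 1$ we have $s_i\neq s_0=s$. Thus $s$ and $s_i$ are two distinct edges with the same pair of endpoints $\{\alpha,\beta\}$, contradicting the assumption that $\Gamma$ has no multiple edges. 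I do not anticipate any serious obstacle here, since the entire lemma is a direct combinatorial consequence of the hypotheses $\mm\equiv 1$, no loops, no multiple edges, combined with the structural description of $P_S$ already established.
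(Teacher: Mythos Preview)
Your proposal is correct and matches what the paper intends: the paper itself does not give a proof, stating only that ``its proof is straightforward and we leave it to the reader.'' Your argument is precisely the routine verification one expects---reading off (1) and (2) from the description of indecomposable projectives at the end of Section~\ref{sec:notation} with $\mm\equiv 1$, deducing (4)--(6) from the absence of loops (so that successor sequences list distinct edges), and obtaining (3) from the no-multiple-edges hypothesis via the observation that $s_i=t_j$ would force a second edge between $\alpha$ and $\beta$.
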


\section{Structure of uniserial modules}\label{sec:unis}

In this section we describe the structure of the uniserial modules of a Brauer graph algebra.

\begin{prop}\label{uniser-sub} Let $(\Gamma,\mathfrak o,\mm)$ with $\mm\equiv 1$ be a Brauer graph
with no loops or multiple edges and let $\cA_{\Gamma}$ denote the
associated Brauer graph algebra. Assume $S$ is a simple $\Ag$-module
such that $P_S$ is biserial and $U$ is a simple $\Ag$-module such
that $P_U$ is uniserial.  If $s$ is the edge in $\Gamma$ associated
to $S$ and $s$ has endpoints $\alpha$ and $\beta$, then let
$s=s_0,s_1,s_2, \dots,s_{m-1}$ and $s=t_0,t_1,t_2,\dots,t_{n-1}$ be
the successor sequences for $s$ at vertices $\alpha$ and $\beta$
respectively. Let $S_i$ (resp. $T_i$) be the simple $\Ag$-module
associated to the edge $s_i$ (resp. $t_i$) and $S_m=T_n=S$.  If $u$
is the edge in $\Gamma$ associated to $U$ and $u$ has endpoints
$\alpha'$ and $\beta'$ with $u$ truncated at $\beta'$, then let
$u=u_0,u_1,u_2,\dots,u_{p-1}$ be the successor sequence for $u$ at
$\alpha'$.  Let $U_i$ be the simple $\Ag$-module associated to the
edge $u_i$ and $U_p=U_0=U$. Let $L$ be a nonzero uniserial
$\Ag$-module with composition series for $L$, $(0)=L_{k+1}\subset
L_{k}\subset \cdots\subset L_1\subset L_0=L$.

\begin{enumerate}[\bf (1)]
\item If the socle of $L$ is isomorphic to $S$, then
 either $0\leqslant k\leqslant m-1$ and, for $j=0,\dots,k$,
$L_j/L_{j+1}\cong S_{m-k+j}$ or  $0\leqslant k\leqslant n-1$ and, for
$j=0,\dots,k$, $L_j/L_{j+1}\cong T_{n-k+j}$.

\item If the socle of $L$ is isomorphic to $U$, then  $0\leqslant k\leqslant p$
and $j=0,\dots,k$, $L_j/L_{j+1}\cong U_{p-k+j}$.
\end{enumerate}
\end{prop}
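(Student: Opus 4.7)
The plan is to exploit the fact that $\Ag$ is weakly symmetric (noted at the end of Section~\ref{sec:notation}), so that each indecomposable projective $P_X$ is also the injective envelope of its socle $X$. Since the uniserial module $L$ has simple socle, this gives an embedding $L\hookrightarrow P_S$ (respectively $L\hookrightarrow P_U$) under which $\soc L$ is identified with $\soc P_S$ (respectively $\soc P_U$). The task then reduces to enumerating the uniserial submodules of $P_S$ (respectively $P_U$) that contain the socle and reading off their composition factors from Lemma~\ref{up-low-uniser}.

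Part~\textbf{(2)} is immediate: since $u$ is truncated at $\beta'$, the module $P_U$ is uniserial, so its submodules form a chain ordered by composition length, and by Lemma~\ref{up-low-uniser}\textbf{(2)} its composition factors listed from top to bottom are $U_0,U_1,\dots,U_{p-1},U_p$. The unique submodule of composition length $k+1$ therefore has composition factors $U_{p-k},\dots,U_p$, which gives $L_j/L_{j+1}\cong U_{p-k+j}$ for $0\leqslant j\leqslant k$.

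For part~\textbf{(1)}, I would first check that $L\subseteq \rad P_S$: otherwise $L+\rad P_S=P_S$, and Nakayama's lemma would force $L=P_S$, contradicting the hypothesis that $P_S$ is biserial and hence not uniserial. Consequently $L/\soc P_S$ is a uniserial submodule (possibly zero) of $\rad P_S/\soc P_S\cong U'\oplus V'$, where $U'$ and $V'$ are the two uniserial summands identified in Lemma~\ref{up-low-uniser} with composition factors from top to bottom $S_1,\dots,S_{m-1}$ and $T_1,\dots,T_{n-1}$ respectively. The key step is the following claim: \emph{any uniserial submodule $M$ of $U'\oplus V'$ embeds, via one of the two projections, into $U'$ or into $V'$}. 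To prove the claim, let $\pi_{U'}$ and $\pi_{V'}$ denote the projections and set $K_{U'}=\ker(\pi_{U'}|_M)=M\cap V'$ and $K_{V'}=\ker(\pi_{V'}|_M)=M\cap U'$; then $K_{U'}\cap K_{V'}=M\cap(U'\cap V')=0$, and since $M$ is uniserial its submodule lattice is totally ordered under inclusion, forcing at least one of $K_{U'},K_{V'}$ to vanish. Hence one of $\pi_{U'}|_M,\pi_{V'}|_M$ is injective, establishing the claim.

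Applying the claim to $M=L/\soc P_S$ and observing that the submodules of the uniserial modules $U'$ and $V'$ are determined by composition length, $M$ has composition factors either $S_{m-k},\dots,S_{m-1}$ from top to bottom for some $0\leqslant k\leqslant m-1$, or $T_{n-k},\dots,T_{n-1}$ for some $0\leqslant k\leqslant n-1$; appending the composition factor $S$ contributed by $\soc P_S$ at the bottom, and using the conventions $S_m=T_n=S$ of the proposition, gives the stated description of $L$. The main obstacle is the claim about uniserial submodules of $U'\oplus V'$; once that is in place, everything else is routine bookkeeping with the composition series supplied by Lemma~\ref{up-low-uniser}.
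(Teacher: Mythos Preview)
Your argument is correct and follows the same overall strategy as the paper: embed $L$ into the injective envelope $P_S$ (respectively $P_U$), then identify it with a submodule of one of the uniserial arms. The only real difference is in how the key step is justified. The paper works with the two uniserial submodules $X,Y\subseteq\rad P_S$ satisfying $X\cap Y=\soc P_S$, and appeals to Corollary~\ref{cor:proj-im}{\bf(3)} (submodules of an indecomposable projective are determined by their tops) together with the distinctness of composition factors in Lemma~\ref{up-low-uniser} to conclude that $L$ must coincide with a submodule of $X$ or of $Y$. You instead pass to the quotient $\rad P_S/\soc P_S\cong U'\oplus V'$ and give a direct, self-contained argument: since the two kernels $M\cap U'$ and $M\cap V'$ intersect trivially and the submodule lattice of the uniserial module $M$ is a chain, one of them vanishes. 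Your route avoids invoking the earlier Hom-space computations of Proposition~\ref{prop:proj-struct} and Corollary~\ref{cor:proj-im}, at the modest cost of only obtaining the composition series of $L$ up to isomorphism rather than locating $L$ as an explicit submodule; for the statement at hand that is all that is needed.
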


\begin{proof} We prove {\bf (1)} and leave {\bf (2)} to the reader.
If $L$ is a simple module, then $L\cong S= S_m$, and
taking $k=0$ we see that {\bf (1)} follows. Now assume that $L$ is a
nonsimple uniserial module with socle $S$. It follows that $L$ is
isomorphic to a submodule of $P_S$ since $P_S$ is the injective
envelope of $S$. Since $\mm\equiv 1$ and $\Gamma$ has no loops or
multiple edges, $\rad(P_S)$ is the sum of two uniserial modules $X$
and $Y$ such that
\begin{enumerate}
\item[(i)] $X\cap Y = \soc(P_S)$,
\item[(ii)] if $0= X_{m}\subset X_{m-1}\subset \cdots\subset
X_{0} = X$ is the composition series for $X$, then, for
$j=0,\dots,m-1$, $X_j/X_{j+1}\cong S_{j+1}$,
\item[(iii)] if $0= Y_{n}\subset Y_{n-1}\subset \cdots\subset
Y_{0} = Y$ is the composition series for $Y$, then, for
$j=0,\dots,n-1$, $Y_j/Y_{j+1}\cong T_{j+1}$.
\end{enumerate}
By Corollary~\ref{cor:proj-im}{\bf (3)} and Lemma~\ref{up-low-uniser}{\bf (5)}, the uniserial module $L$ must be
isomorphic to a submodule of either $X$ or $Y$ and the result
follows.
\end{proof}

An immediate consequence of the above result is the following.

\begin{cor}\label{cor:top-soc-uniser} Let $(\Gamma,\mathfrak o,\mm)$ with $\mm\equiv 1$ be a Brauer graph
with no loops or multiple edges and let $\cA_{\Gamma}$ denote the
associated Brauer graph algebra. If $L$ and $L'$ are two nonsimple,
nonprojective uniserial $\Ag$-modules such that $\soc(L)\cong
\soc(L')$ and $\Top(L)\cong \Top(L')$, then $L\cong L'$.
\end{cor}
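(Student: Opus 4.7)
The plan is to deduce this directly from the structure of uniserial submodules given in Proposition~\ref{uniser-sub}, combined with the distinctness statements in Lemma~\ref{up-low-uniser}. Let $V \cong \soc(L) \cong \soc(L')$ and let $v$ be the associated edge in $\Gamma$. Both $L$ and $L'$ embed into the injective envelope $P_V$ of $V$, so by Corollary~\ref{cor:proj-im}\textbf{(3)} it will suffice to show that $L$ and $L'$ produce the same composition series inside $P_V$.

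First I would handle the case where $P_V$ is uniserial. Proposition~\ref{uniser-sub}\textbf{(2)} describes any uniserial submodule of $P_V$ with socle $V$ by a single integer $k$ along the successor sequence $v = u_0, u_1, \dots, u_{p-1}$ at the unique nontruncated endpoint. The nonsimple, nonprojective hypothesis forces $1 \leqslant k \leqslant p-1$, and $\Top(L) \cong U_{p-k}$ with $p-k \in \{1,\dots,p-1\}$. By Lemma~\ref{up-low-uniser}\textbf{(6)}, the simples $U_1, \dots, U_{p-1}$ are pairwise nonisomorphic, so the hypothesis $\Top(L) \cong \Top(L')$ forces the corresponding integers to coincide, and hence $L \cong L'$.

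The biserial case needs a little more care. Here Proposition~\ref{uniser-sub}\textbf{(1)} offers two possible ``branches'' for $L$: either $L$ sits in the uniserial side corresponding to the successor sequence $s = s_0, \dots, s_{m-1}$ at $\alpha$ (giving $\Top(L) \cong S_{m-k}$ with $m-k \in \{1, \dots, m-1\}$) or in the side associated to $t_0, \dots, t_{n-1}$ at $\beta$ (giving $\Top(L) \cong T_{n-k}$). The key step is to rule out a mixed configuration where $L$ and $L'$ lie in different branches: such a configuration would force $S_i \cong T_j$ for some $1 \leqslant i \leqslant m-1$ and $1 \leqslant j \leqslant n-1$, which is forbidden by Lemma~\ref{up-low-uniser}\textbf{(3)}. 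Once $L$ and $L'$ are confined to the same branch, Lemma~\ref{up-low-uniser}\textbf{(4)} (respectively \textbf{(5)}) gives the pairwise distinctness of the $S_i$ (respectively $T_i$) in the relevant range, so $\Top(L)\cong\Top(L')$ pins down the length, and hence the composition series, of $L$.

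The argument has no genuine obstruction; it is essentially a clean case analysis on the shape of $P_V$. The only subtle bookkeeping is to track how the nonsimple and nonprojective assumptions restrict the index $k$ to the range in which the distinctness statements of Lemma~\ref{up-low-uniser} apply, so that the top of $L$ really does determine both its branch and its length.
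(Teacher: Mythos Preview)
Your proposal is correct and follows essentially the same route the paper intends: the paper offers no explicit proof beyond calling the corollary ``an immediate consequence'' of Proposition~\ref{uniser-sub}, and you have faithfully unpacked that consequence via the case split on whether $P_V$ is uniserial or biserial, together with the distinctness statements in Lemma~\ref{up-low-uniser}. One small remark: your opening appeal to Corollary~\ref{cor:proj-im}\textbf{(3)} is not actually needed---once Proposition~\ref{uniser-sub} and Lemma~\ref{up-low-uniser} pin down the branch and the index $k$, the isomorphism type of $L$ is already determined, so you could drop that citation without loss.
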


\section{Structure of string modules}\label{sec:string}

Let $(\Gamma,\mathfrak o,\mm)$ with $\mm\equiv 1$ be a Brauer graph
with no loops or multiple edges and let $\cA_{\Gamma}$ denote the
associated Brauer graph algebra. We now classify the string modules for $\Ag$ and begin with
uniserial modules.  Let $L$ be a (nonzero) uniserial $\Ag$-module.
There are 3 cases to consider. The first case is that $L$ is a projective-injective
module. There is no special notation for this case. The second case is
that $L$ is isomorphic to a simple $\Ag$-module, $S$. Let $s$ be the
edge in $\Gamma$ associated to $S$. In this case, we denote $L$ (up
to isomorphism) by $\st(s^+)$, where $\st(s^+)\cong S$. The final
case is that $L$ is a nonsimple nonprojective uniserial module with
top $T$ and socle $S$. Let $s$ and $t$ be the edges in $\Gamma$
associated to the simple modules $S$ and $T$ respectively. By
Corollary~\ref{cor:top-soc-uniser}, $T$ and $S$ completely determine $L$
up to isomorphism.  We denote $L$ by either $\st(t^+,s^-)$ or
$\st(s^-,t^+)$.

If $s$ and $t$ are edges in $\Gamma$, and the successor sequence for $s$ at vertex $\alpha$ is
$s=s_0,s_1,\dots,s_{m-1}$, then we say \emph{$t$ occurs in the
successor sequence for $s$ at vertex $\alpha$} if $\alpha$ is a
nontruncated endpoint of $s$ and $t=s_i$, for some $1\leqslant i\leqslant m-1$.
Clearly $t$ occurs in the successor
sequence for $s$ at vertex $\alpha$ if and only if $s$ occurs in the
successor sequence for $t$ at vertex $\alpha$.  Note also that $t$ cannot
occur in the successor sequence
for $s$ at both endpoints $\alpha$ and $\beta$, for otherwise $\alpha=\beta$ which
contradicts the assumption that there are no loops in $\Gamma$. Let
$S$ and $T$ be the simple $\Ag$-modules associated to $s$ and $t$
respectively. By Proposition~\ref{uniser-sub} and its proof, we see
that there is a uniserial module $L$, unique up to isomorphism, with
top $T$ and socle $S$, if and only if $s=t$ or $t$ occurs in the
successor sequence for $s$ at some vertex $\alpha$.

Thus, summarizing the description of uniserial string modules, we have the
projective-injective uniserial modules, the simple modules
$\st(s^+)$, and the modules of the form $\st(t^+,s^-)$ where $t$
occurs in the successor sequence for $s$ at some vertex $\alpha$.

\bigskip

We now describe the nonuniserial string modules for $\Ag$ in terms of
sequences of weighted edges in the Brauer graph $\Gamma$.

\begin{defn}
For $n\geqslant 2$, let $\as{s}_1,\dots,\as{s}_n$ be edges in $\Gamma$. We assign either $+$
or $-$ to each edge $\as{s}_i$ and denote this assignment by either
$\as{s}_i^+$ or $\as{s}_i^-$.  Consider the sequence
$\sigma=\as{s}_1^{e_1},\as{s}_2^{e_2},\dots, \as{s}_n^{e_n}$, where
$e_i\in\{+,-\}$ for $i=1,\dots,n$. We say $\sigma$ is an
\emph{acceptable sequence of weighted edges} if the following hold.
\begin{enumerate}
\item[{\bf (ST1)}] For $i=1,\dots,n-1$, $e_i\ne e_{i+1}$.
\item[{\bf (ST2)}] For $i=1,\dots,n-1$, there is $\alpha_i\in\Gamma_0$ such that $\as{s}_{i}$ and $\as{s}_{i+1}$ belong to $\Gamma_1^{(\alpha_i)}$.
\item[{\bf (ST3)}] For $i=1,\dots,n-1$, $\as{s}_i\neq \as{s}_{i+1}$.
\item[{\bf (ST4)}] For $i=1,\dots,n-2$, $\alpha_i\ne\alpha_{i+1}$.
\end{enumerate}
\end{defn} Conditions \textbf{(ST2)} and \textbf{(ST3)} combined mean that for $i=1,\dots,n-1$, there are vertices $\alpha_i$ in $\Gamma$, such that $\as{s}_{i+1}$ occurs in the successor
sequence for $\as{s}_i$ at vertex $\alpha_i$.

We note that, for $i=1,\dots, n-1$, the vertex $\alpha_i$ is
uniquely determined by $\as{s}_i$ and $\as{s}_{i+1}$ by our assumption that
$\Gamma$ has no loops or multiple edges. We use the notation $\as{s}$ in order to distinguish
acceptable sequences and successor sequences, but it may happen that  $\as{s}_{i+1}$ is in fact the
successor of $\as{s}_i$ at one of its vertices.

The following result is straightforward and the proof is left to the reader.

\begin{lemma}\label{accept-seq}
Let $(\Gamma,\mathfrak o,\mm)$ with $\mm\equiv 1$ be a Brauer graph with no
loops or multiple edges  and suppose that $\sigma= \as{s}_1^{e_1},\as{s}_2^{e_2},\dots,
\as{s}_{n}^{e_{n}}$ is an acceptable sequence of weighted edges in
$\Gamma$ with $n\geqslant 2$. Then
\begin{enumerate}[\bf (1)]
\item $\as{s}_n^{e_n},\as{s}_{n-1}^{e_{n-1}},\dots, \as{s}_1^{e_1}$ is an acceptable sequence of
weighted edges in $\Gamma$,
\item $\as{s}_1^{e_1},\as{s}_2^{e_2},\dots, \as{s}_i^{e_i}$ is an acceptable sequence of
weighted edges in $\Gamma$, for $i=2,\dots,n$, and,
\item for $i=1,\dots,n$, if $e_i^*=+$ when $e_i=-$, and $e_i^*=-$ when
$e_i=+$, then $\as{s}_1^{e_1^*},\as{s}_2^{e_2^*},\dots, \as{s}_{n}^{e_n^*}$ is an acceptable
sequence of weighted edges in $\Gamma$.
\end{enumerate}
\end{lemma}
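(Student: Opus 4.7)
The plan is to verify each of the three conditions \textbf{(ST1)}, \textbf{(ST2)}, \textbf{(ST3)} for each of the three new sequences, using only formal properties of the definition together with the symmetric nature of ``occurs in the successor sequence of'' noted earlier in the paper (namely, that $t$ occurs in the successor sequence for $s$ at $\alpha$ if and only if $s$ occurs in the successor sequence for $t$ at $\alpha$, and moreover the witnessing vertex is unique under the no loops/no multiple edges hypothesis).

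For part \textbf{(1)}, I would let $\tau_j = \as{s}_{n-j+1}^{e_{n-j+1}}$ for $j=1,\dots,n$ denote the reversed sequence, and verify: condition \textbf{(ST1)} transfers directly because the alternation $e_i \neq e_{i+1}$ is symmetric under reversal; condition \textbf{(ST2)} follows because $\as{s}_{n-j+1}$ occurs in the successor sequence for $\as{s}_{n-j}$ at $\alpha_{n-j}$ (by \textbf{(ST2)} applied to $\sigma$), and hence by the symmetry remark, $\as{s}_{n-j}$ occurs in the successor sequence for $\as{s}_{n-j+1}$ at the same vertex $\alpha_{n-j}$; condition \textbf{(ST3)} then follows because the vertices associated to the reversed sequence are $\alpha_{n-1},\alpha_{n-2},\dots,\alpha_1$, which are pairwise distinct in consecutive positions exactly because they were in $\sigma$.

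For part \textbf{(2)}, I would simply note that truncating $\sigma$ to its first $i$ terms preserves all the defining conditions: conditions \textbf{(ST1)} and \textbf{(ST3)} require only inequalities between indexed consecutive entries, which are inherited from $\sigma$, and condition \textbf{(ST2)} provides the same vertices $\alpha_1,\dots,\alpha_{i-1}$ as witnesses.

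For part \textbf{(3)}, the sign-flip only changes the $e_i$'s; since \textbf{(ST2)} and \textbf{(ST3)} make no reference to the signs, they are unaffected, while \textbf{(ST1)} asks only that consecutive signs differ, which is preserved under globally interchanging $+$ and $-$.

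There is essentially no obstacle here: all three parts follow mechanically from unpacking the definition, together with the already-noted symmetry of the successor-occurrence relation. The only mild subtlety is the bookkeeping in part \textbf{(1)} to ensure that the same vertex $\alpha_i$ serves as the witness for the reversed adjacency, which is exactly what the symmetry statement guarantees.
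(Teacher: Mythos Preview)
Your proposal is correct and is exactly the routine verification the paper has in mind; indeed, the paper does not give a proof at all, stating only that the result is straightforward and leaving it to the reader. Your one substantive observation---that the same vertex $\alpha_i$ witnesses the reversed adjacency in part \textbf{(1)} by the symmetry of the successor-occurrence relation---is precisely the point that makes the lemma immediate.
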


Suppose that $\sigma=\as{s}_1^{e_1},\as{s}_2^{e_2},\cdots,
\as{s}_n^{e_n}$ is an acceptable sequence of weighted edges in $\Gamma$.
For $i=1,\dots,n$, let $\as{S}_i$ be the simple $\Ag$-module associated
to the edge $\as{s}_i$.  We define $\st(\sigma)$ inductively such that
the top of $\st(\sigma)$ is $\oplus_{\{i\, \mid\, e_i=+\}}\as{S}_i$ and the
socle of $\st(\sigma)$ is $\oplus_{\{i\, \mid\, e_i=-\}}\as{S}_i$.  We say
$\st(\sigma)$ satisfies the \emph{top and socle condition}.

\begin{defn}
Let $\sigma=\as{s}_1^{e_1},\as{s}_2^{e_2},\dots, \as{s}_n^{e_n}$ be an acceptable
sequence of weighted edges in $\Gamma$, and let $n\geqslant 2$.
For $n=2$, $\st(\as{s}_1^{e_1},\as{s}_2^{e_2})$ was defined above, and
clearly satisfies the top and socle condition.

Assume $n\geqslant 3$ and suppose we have defined $\st(\as{s}_1^{e_1},\as{s}_2^{e_2},\dots,
\as{s}_{n-1}^{e_{n-1}})$ satisfying the top and socle condition. There
are two cases: $e_{n-1}=-$ and $e_{n-1}=+$.
\begin{enumerate}[(i)]
\item Suppose $e_{n-1}=-$, so that $e_n=+$.
We set
\[
\st(\sigma)=\coker(\mu\colon \as{S}_{n-1}\to \st(\as{s}_1^{e_1},\dots,
\as{s}_{n-2}^{e_{n-2}}, \as{s}_{n-1}^{-})\oplus \st(\as{s}_{n-1}^{-},\as{s}_n^{+})),
\]
where $\mu$ is induced from the map on socles given by
$$\begin{cases}
\as{S}_{n-1}\to
(\as{S}_2\oplus \as{S}_4\oplus\cdots \oplus \as{S}_{n-1})\oplus \as{S}_{n-1} \mbox{ with $x\mapsto
((0,\dots,0,x),x)$} & \text{if $n$ is odd}\\
\as{S}_{n-1}\to (\as{S}_1\oplus
\as{S}_3\oplus\cdots \oplus \as{S}_{n-1})\oplus \as{S}_{n-1} \mbox{ with $x\mapsto
((0,\dots,0,x),x)$} & \text{if $n$ is even.}
\end{cases}$$

\item Now suppose that $e_{n-1}=+$.  Then $e_n=-$.   We define $\st(\sigma)$
to be the kernel of the composition
\[ \st(\as{s}_1^{e_1},\dots,\as{s}_{n-2}^{e_{n-2}},
\as{s}_{n-1}^{+})\oplus \st(\as{s}_{n-1}^{+},\as{s}_n^-)\to
(\oplus_{\{i\,\mid\,e_i=+, 1\leqslant i\leqslant n-1\}}\as{S}_i)
\oplus \as{S}_{n-1}\stackrel{\nu}{\to}\as{S}_{n-1},
\]
where the first map is given by canonical surjection onto the tops
of $\st(\as{s}_1^{e_1},\dots, \as{s}_{n-1}^{e_{n-1}})$ and
$\st(\as{s}_{n-1}^{e_{n-1}},\as{s}_n^{e_n})$ and $\nu$ is given by
$$\begin{cases}
((y_2,y_4,\dots,y_{n-1}),y_{n-1}^{\prime})\mapsto
y_{n-1}-y_{n-1}^{\prime} & \text{if $n$ is odd}\\
((y_1,y_3,\dots,y_{n-1}),y_{n-1}^{\prime})\mapsto
y_{n-1}-y_{n-1}^{\prime} &\text{if $n$ is even,}
\end{cases}$$
where $y_i\in \as{S}_i$ and
$y_{n-1}^{\prime}\in \as{S}_{n-1}$.
\end{enumerate}
\end{defn}

The reader may check that the top and socle condition is satisfied
by $\st(\sigma)$ in all cases.  The next proposition gives an
alternative definition for $\st(\sigma)$.

\begin{prop}\label{prop:st-genl}
Let $(\Gamma,\mathfrak o,\mm)$ with $\mm\equiv 1$ be a Brauer graph
with no loops or multiple edges and let $\cA_{\Gamma}$ denote the
associated Brauer graph algebra.  Suppose that $\sigma=
\as{s}_1^{e_1},\as{s}_2^{e_2},\dots, \as{s}_{n}^{e_{n}}$ is an acceptable sequence of
weighted edges in $\Gamma$ with $n\geqslant 3$ and let $\as{S}_i$ be the simple
$\Ag$-module associated to $\as{s}_i$. Let $2\leqslant k\leqslant n-1$.
\begin{enumerate}[\bf (1)]
\item If $e_{k}=-$, then we set
\[
X=\coker(\mu\colon \as{S}_{k}\to \st(\as{s}_1^{e_1},\dots,
\as{s}_{k}^{e_{k}})\oplus \st(\as{s}_{k}^{e_{k}},\dots,\as{s}_n^{e_n})),
\]
where $\mu$ is induced from the map on socles given by
$$\begin{cases}
\as{S}_{k}\to (\as{S}_2\oplus \as{S}_4\oplus\cdots \oplus \as{S}_{k})\oplus (\as{S}_{k}\oplus
\as{S}_{k+2}\oplus \cdots) \\
\quad\quad \mbox{ with } x\mapsto ((0,\dots,0,x),(x,0,
\dots,0)) & \text{if $k$ is even}\\
\as{S}_{k}\to (\as{S}_1\oplus \as{S}_3\oplus\cdots \oplus \as{S}_{k})\oplus (\as{S}_{k}\oplus \as{S}_{k+2}\oplus
\cdots) \\
\quad\quad \mbox{ with } x\mapsto ((0,\dots,0,x),(x,0,\dots,0)) & \text{if $k$ is
odd.}
\end{cases}$$
\item If $e_{k}=+$, then we set $X$ to be the kernel
of the composition of canonical surjections onto tops
\[
\st(\as{s}_1^{e_1},\dots,
\as{s}_{k}^{e_{k}})\oplus \st(\as{s}_{k}^{e_k},\dots,\as{s}_n^{e_n})\to
(\oplus_{\{e_i=+, 1\leqslant i\leqslant k\}}\as{S}_i)\oplus (\oplus_{\{e_i=+, k\leqslant i\leqslant
n\}}\as{S}_i)
\]
with
\[
\nu\colon(\oplus_{\{e_i=+, 1\leqslant i\leqslant k\}}\as{S}_i)\oplus (\oplus_{\{
e_i=+, k\leqslant i\leqslant n\}}\as{S}_i) {\to}\as{S}_{k},
\]
where $\nu$ is given by
$$\begin{cases}
((y_2,y_4,\dots,y_{k}),(y_{k}^{\prime},y_{k+2},\dots))\mapsto
y_{k}-y_{k}^{\prime} &\text{if $k$ is even}\\
((y_1,y_3,\dots,y_{k}),(y_{k}^{\prime},y_{k+2},\dots))\mapsto
y_{k}-y_{k}^{\prime} & \text{if $k$ is odd,}
\end{cases}$$
with $y_i\in \as{S}_i$ and $y_{k}^{\prime}\in \as{S}_{k}$.
\end{enumerate}
Then $\st(\sigma)\cong X$.
\end{prop}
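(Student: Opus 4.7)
The plan is to proceed by induction on $n$. The base case $n=3$ forces $k=2=n-1$, and in that situation the formulas for $X$ in \textbf{(1)} and \textbf{(2)} reproduce verbatim the recursive definition of $\st(\sigma)$ given immediately before the proposition, so the isomorphism is immediate.

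For the inductive step, fix $n\pgq 4$. If $k = n-1$ then once again $X = \st(\sigma)$ by the recursive definition itself, so I will assume $2\ppq k\ppq n-2$. Set $A=\st(\as{s}_1^{e_1},\dots,\as{s}_{n-1}^{e_{n-1}})$, $B=\st(\as{s}_{n-1}^{e_{n-1}},\as{s}_n^{e_n})$, $C=\st(\as{s}_1^{e_1},\dots,\as{s}_k^{e_k})$ and $D=\st(\as{s}_k^{e_k},\dots,\as{s}_{n-1}^{e_{n-1}})$. Denote by $M\sqcup_i N$ the amalgamation of two modules along the simple $\as{S}_i$, interpreted as the cokernel of a diagonal socle map when $e_i=-$ and as the kernel of a difference top map when $e_i=+$. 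The recursive definition of $\st(\sigma)$ gives $\st(\sigma)=A\sqcup_{n-1} B$, while the inductive hypothesis applied to the acceptable subsequence defining $A$, with cut at $k\ppq n-2$, yields $A\cong C\sqcup_k D$. Hence $\st(\sigma)\cong(C\sqcup_k D)\sqcup_{n-1}B$. On the other hand, the recursive definition applied to the subsequence $\as{s}_k^{e_k},\dots,\as{s}_n^{e_n}$ (whose length is $n-k+1\ppq n-1$) gives $\st(\as{s}_k^{e_k},\dots,\as{s}_n^{e_n})=D\sqcup_{n-1}B$, so that $X=C\sqcup_k(D\sqcup_{n-1}B)$. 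The proposition therefore reduces to the associativity isomorphism $(C\sqcup_k D)\sqcup_{n-1}B\cong C\sqcup_k(D\sqcup_{n-1}B)$.

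I expect this associativity step to be the main obstacle, and I would split it into four cases corresponding to the pair $(e_k,e_{n-1})\in\{+,-\}^2$. The crucial observation is that $\as{S}_k$ and $\as{S}_{n-1}$ occupy distinct and independent positions in the top and socle data of $D$: since $k\ppq n-2$, the edges $\as{s}_k$ and $\as{s}_{n-1}$ are the two extremal edges of the subsequence that defines $D$, and by the top and socle condition they contribute to independent direct summands of $\Top(D)\oplus\soc(D)$. Consequently the structural maps used in the amalgamations at $\as{S}_k$ and at $\as{S}_{n-1}$ factor through disjoint summands. In the two pure cases (both signs $-$ or both $+$) associativity is standard: iterated pushouts commute with each other, and iterated pullbacks commute with each other, up to canonical isomorphism. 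In the mixed cases one amalgamation is a cokernel and the other a kernel, and pushouts and pullbacks do not commute in general; here, however, the disjointness just observed makes this routine, because the two structural maps act on complementary summands and can be composed in either order. Verifying the commutation in each of the four sign patterns by a direct diagram chase, using the explicit socle and top maps from the inductive definition, closes the induction and establishes $\st(\sigma)\cong X$.
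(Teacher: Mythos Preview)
Your proposal is correct and follows essentially the same approach as the paper. The paper also inducts on $n$, handles $k=n-1$ by definition, and for $k<n-1$ sets up exactly your modules $A,B,C,D$ (called $Z_1,Z_2,U_1,V_1$ there) together with $U_2=\st(\as{s}_k^{e_k},\dots,\as{s}_n^{e_n})$; it then proves the associativity $(C\sqcup_k D)\sqcup_{n-1}B\cong C\sqcup_k(D\sqcup_{n-1}B)$ by writing out an explicit $3\times 3$ exact commutative diagram in one representative mixed case ($e_k=+$, $e_{n-1}=-$), leaving the remaining sign patterns to the reader, which is precisely the diagram chase you allude to.
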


\begin{proof}
We proceed by induction on $n$.  For $n=3$, $k$ must be $2=n-1$ and
the result follows from the definition of $\st(\sigma)$.  Now assume
the result is true for all $m$, $3\leqslant m\leqslant n-1$.  Fix $2\leqslant k\leqslant
n-1$.  If $k=n-1$, then the result again follows from the definition
of $\st(\sigma)$. Assume $k<n-1$.  There are many cases to consider:
$e_k$ equal to $+$ or $-$, $k$ even or odd, and $n$ even or odd.
The cases all have similar proofs. We prove one case and leave the
others to the reader.

The case we prove is for $n$ even, $k$ even,
and $e_k=+$.  Note that we then have, for $i$ odd, $e_i= -$ and, for $i$ even,
$e_i=+$.
For ease of notation, we set  $Z_1=\st(\as{s}_1^-,\dots,\as{s}_{n-1}^-)$,
$Z_2=\st(\as{s}_{n-1}^-,\as{s}_{n}^+)$, $U_1=\st(\as{s}_1^-,\dots,\as{s}_{k}^+)$,
$U_2=\st(\as{s}_k^+,\dots,\as{s}_{n}^+)$, and $V_1 = \st(\as{s}_k^+, \dots ,\as{s}_{n-1}^-)$.

 From the definition of $\st(\sigma)$, we have a short exact
 sequence of $\Ag$-modules
 \[
 0\to \as{S}_{n-1}\stackrel{\mu}{\to} Z_1\oplus Z_2\to \st(\sigma)\to 0.
 \]
By induction, we have a short exact sequence
\[
0\to Z_1\stackrel{g}{\to} U_1\oplus V_1\stackrel{h}{\to} \as{S}_k \to 0.
\]
From this short exact sequence we obtain
\[
\everymath{\scriptstyle}
0\to Z_1\oplus Z_2\stackrel{\left(\begin{array}{cc} g&0\\
0& \Id_{Z_2}\end{array}\right)}{\longrightarrow} (U_1\oplus V_1)
\oplus Z_2\to \as{S}_k\oplus 0 \to 0.
\]
Also by induction, we have a short exact sequence
\[
\everymath{\scriptstyle} 0\to \as{S}_{n-1}\to V_1\oplus
Z_2\stackrel{\phi}\to U_2\to 0.
\]
From this short exact sequence we obtain
\[
\everymath{\scriptstyle}
0\to 0\oplus \as{S}_{n-1}\to U_1\oplus (V_1\oplus Z_2)\stackrel{\left(\begin{array}{cc} \Id_{U_1}&0\\
0& \phi\end{array}\right)}{\longrightarrow} U_1\oplus U_2\to 0.
\]

Using the above sequences and that $U_1\oplus V_1\oplus Z_2 =
(U_1\oplus V_1)\oplus Z_2 = U_1\oplus (V_1\oplus Z_2)$,  we obtain
an exact commutative diagram:
\[
\everymath{\scriptstyle}
\xymatrix{ & & 0\ar[d]&0\ar[d]\\
0\ar[r]& \as{S}_{n-1}\ar[r]^{\mu}\ar[d]^{\cong}&Z_1\oplus
Z_2\ar[r]\ar[d]_{\left(\begin{array}{cc} g&0\\
0& \Id_{Z_2}\end{array}\right)}&\st(\sigma)\ar[r]\ar[d]&0\\
0\ar[r]&0\oplus \as{S}_{n-1}\ar[r]& U_1\oplus V_1\oplus Z_2
\ar[r]^{\left(\begin{array}{cc} \Id_{U_1}&0\\
0& \phi\end{array}\right)}\ar[d]_{\left(\begin{array}{c} h\\
0\end{array}\right)}& U_1\oplus U_2\ar[r]
\ar[d]& 0 \\
&&\as{S}_k\oplus 0\ar[r]^{\cong}\ar[d]&\as{S}_k\ar[d]\\
&&0&0 }
\]
The reader may check that the exact sequence that appears as the
last column in the diagram above proves that $X\cong \st(\sigma)$ in
this case.
\end{proof}

\begin{cor}\label{cor:string-rev}
Let $(\Gamma,\mathfrak o,\mm)$ with $\mm\equiv 1$ be a Brauer graph with no
loops or multiple edges and let $\cA_{\Gamma}$ denote the associated
Brauer graph algebra.  Suppose that $\sigma=\as{s}_1^{e_1},\as{s}_2^{e_2},\dots,
\as{s}_{n}^{e_{n}}$ is an acceptable sequence of weighted edges in
$\Gamma$ with $n\geqslant 2$.  Then $\tau
=\as{s}_n^{e_n},\as{s}_{n-1}^{e_{n-1}},\dots, \as{s}_{1}^{e_{1}}$ is also an
acceptable sequence of weighted edges in $\Gamma$ and
\[\st(\sigma)\cong \st(\tau).\]
\end{cor}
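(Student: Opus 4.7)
The plan is to proceed by induction on $n$, using Proposition~\ref{prop:st-genl} to replace the ``recursion from the right'' in the definition of $\st(\sigma)$ by a ``recursion from the left''.

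First, the fact that $\tau$ is acceptable is immediate from Lemma~\ref{accept-seq}{\bf (1)}. For the base case $n=2$, one of $e_1,e_2$ is $+$ and the other $-$, so $\st(\sigma)$ is the (unique up to isomorphism) uniserial module with top $\as{S}_i$ (the $+$ entry) and socle $\as{S}_j$ (the $-$ entry). Since $\tau$ has the same underlying set of weighted edges, $\st(\tau)$ has the same top and socle, and so Corollary~\ref{cor:top-soc-uniser} gives $\st(\sigma)\cong\st(\tau)$.

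For the inductive step, assume the result for all acceptable sequences of length less than $n$, where $n\pgq 3$. Apply Proposition~\ref{prop:st-genl} to $\sigma$ with the choice $k=2$: this expresses $\st(\sigma)$ as a cokernel (when $e_2=-$) or kernel (when $e_2=+$) of an explicit map built out of
\[
\st(\as{s}_1^{e_1},\as{s}_2^{e_2})\quad\text{and}\quad \st(\as{s}_2^{e_2},\as{s}_3^{e_3},\dots,\as{s}_n^{e_n}).
\]
Now apply the inductive hypothesis to the acceptable sequence $\as{s}_2^{e_2},\dots,\as{s}_n^{e_n}$ (of length $n-1$): it is isomorphic to $\st(\as{s}_n^{e_n},\dots,\as{s}_2^{e_2})$. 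Similarly, the base case gives $\st(\as{s}_1^{e_1},\as{s}_2^{e_2})\cong \st(\as{s}_2^{e_2},\as{s}_1^{e_1})$. Substituting these isomorphisms into the formula from Proposition~\ref{prop:st-genl} for $\st(\sigma)$, we obtain a description of $\st(\sigma)$ that coincides with the \emph{defining} recursion of $\st(\tau)$ (which recurses at its last position, i.e.\ the one corresponding to $\as{s}_1^{e_1}$); in other words it coincides with Proposition~\ref{prop:st-genl} applied to $\tau$ with $k=n-1$. This will yield $\st(\sigma)\cong\st(\tau)$.

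The main obstacle is bookkeeping: one must check that the maps $\mu$ or $\nu$ occurring in the Proposition~\ref{prop:st-genl}-description of $\st(\sigma)$ at $k=2$ really do match, under the inductive isomorphisms, the maps appearing in the definition of $\st(\tau)$. This amounts to verifying that the canonical identifications of the top and socle decompositions are compatible with reversing the order of summands, and that the case distinctions by parity of $n$ and of the relevant index $k$ in Proposition~\ref{prop:st-genl} transform correctly when $\sigma$ is replaced by its reverse $\tau$ (so that $k$ is replaced by $n-k+1$ and the parities of the $e_i$'s are unchanged on the underlying edges). Once this is checked in one representative case, the remaining cases are entirely analogous.
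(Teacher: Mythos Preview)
Your proposal is correct and follows essentially the same approach as the paper: the paper's proof is a one-liner that invokes Lemma~\ref{accept-seq}{\bf (1)} for acceptability, handles $n=2$ ``from the definitions'', and for $n\geqslant 3$ appeals to Proposition~\ref{prop:st-genl} with $k=2$ together with induction. You have simply spelled out the bookkeeping that the paper leaves implicit; note that for $n=2$ you can be even more direct, since by definition $\st(t^+,s^-)$ and $\st(s^-,t^+)$ denote the same module.
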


\begin{proof} By Lemma \ref{accept-seq}{\bf (1)}, $\tau$ is an acceptable sequence of weighted edges in
$\Gamma$.  For $n=2$, the result follows from the definitions. If
$n\geqslant 3$, taking $k=2$ in Proposition~\ref{prop:st-genl}, and induction
yields the result.
\end{proof}

By Corollary \ref{cor:string-rev}, we see that there are 3 types of
string modules over $\Ag$.  Suppose $\sigma=\as{s}_1^{e_1},\dots,\as{s}_n^{e_n}$ is
an acceptable sequence of weighted edges in $\Gamma$. We say that $\st(\sigma)$ is a
\emph{positive string module} if $e_1=+=e_n$, a \emph{negative
string module} if $e_1=-=e_n$, and a \emph{mixed string module} if
$e_1\ne e_n$.  We will see that the positive string modules play an
important role in the cohomology theory of $\Ag$.

\section{Syzygies and resolutions in a Brauer graph
algebra with no truncated edges}\label{sec:syzygy}

In this section we assume that
$(\Gamma,\mathfrak o,\mm)$ with $\mm\equiv 1$ is a Brauer graph with no
loops or multiple edges, and let $\cA_{\Gamma}$ denote the associated
Brauer graph algebra. The main result here is Theorem~\ref{thm:min-resol}, where we give a minimal projective resolution of a simple $\cA_{\Gamma}$-module, in the case where $\Gamma$ has no truncated edges.

We begin by fixing a nontruncated edge $s$ in $\Gamma$ with endpoints $\alpha$ and $\beta$.
Since $\mm\equiv 1$, both $\alpha$ and $\beta$ have valency at least 2.
Let edge $s'$ be in the successor sequence of $s$ at vertex
$\alpha$. We say edge $s''$ \emph{follows $(s',s)$} if $s''$ is the
successor of $s$ at vertex $\beta$. The following
diagram illustrates this definition:
\[
\xymatrix{
&\circ\ar@{-}[d]&\circ\ar@{-}[dr]^{s''}&&\circ\ar@{-}[dl]\\
\circ\ar@{.}[ur]\ar@{-}[r]&\stackrel{\alpha}{\circ}\ar@{-}[rr]^s&&\stackrel{\beta}{\circ}\\
\circ\ar@{-}[ru]^{s'}\ar@{.}[rr]&&\circ\ar@{-}[ul]&&\circ\ar@{.}[uu]\ar@{-}[ul]
}
\]

As in Section~\ref{sec:string}, if $e=+$, we let $e^*=-$, and if $e=-$, we let $e^*=+$.
We are now in a position to describe the first syzygy  of a positive
string module.  For this, we introduce the following
notational conventions.  The string module $\st({s}^+,{t}^-,{u}^+)$  will
be schematically represented by
\[
\xymatrix{{S}\ar@{.}[dr]&&{U}\ar@{.}[dl]\\
&{T} }\]
For a nontruncated edge ${s}$, the indecomposable projective $\Ag$-module with top ${S}$ will
be schematically represented by
\[\xymatrix{
&{S}\ar@{.}[dl]\ar@{.}[dr]\\
{T}\ar@{.}[dr]&&{U}\ar@{.}[dl]\\
&{S} }\] where the edge ${t}$ in $\Gamma$ is in the successor sequence
for ${s}$ at one endpoint of ${s}$, and the edge ${u}$ is in the successor
sequence for ${s}$ at the other endpoint of ${s}$.  If a solid line
\[\xymatrix{
{S}\ar@{-}[d]\\{T} }\]
appears, then that signifies that not only is ${t}$ in the successor sequence
of ${s}$ at some vertex of $\Gamma$, but ${t}$ is the
successor of ${s}$ at that vertex.

\begin{prop}\label{prop:first-sy}
Let $(\Gamma,\mathfrak o,\mm)$ with $\mm\equiv 1$ be a
Brauer graph with no loops or multiple edges, and let
$\cA_{\Gamma}$ denote the associated Brauer graph algebra.
Suppose
that $M=\st(\sigma)$ is a positive string module, where
$\sigma=\as{s}_1^{e_1},\dots, \as{s}_{n}^{e_{n}}$ is an acceptable sequence
of weighted edges in $\Gamma$, and $n\geqslant 3$.
Suppose also, for each $i$ with $e_i = +$, that $\as{s}_i$ is a nontruncated edge.
Then the first syzygy
of $M$ is isomorphic to the positive string module $\st(\tau)$,
where
\[\tau = \as{s}_0^{e_1},\as{s}_1^{e_1^*},\dots,
\as{s}_{n}^{e_{n}^*},\as{s}_{n+1}^{e_n},\]
and where $\as{s}_0$ follows $(\as{s}_2,\as{s}_1)$ and $\as{s}_{n+1}$ follows $(\as{s}_{n-1},\as{s}_n)$.
\end{prop}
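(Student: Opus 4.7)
The plan is to compute the first syzygy $\Omega M$ directly from the projective cover of $M$. Since $M = \st(\sigma)$ is a positive string module, its top is $\bigoplus_{\{i\,:\,e_i=+\}} \as{S}_i$, so the projective cover is a surjection
\[ \pi \colon P := \bigoplus_{\{i\,:\,e_i=+\}} P_{\as{S}_i} \longrightarrow M. \]
By the hypothesis that each $\as{s}_i$ with $e_i = +$ is nontruncated, each summand $P_{\as{S}_i}$ is biserial with $\rad(P_{\as{S}_i})/\soc(P_{\as{S}_i})$ a direct sum of two uniserials, one at each endpoint of $\as{s}_i$, as recalled at the end of Section~\ref{sec:notation}. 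Using the description of string modules in Proposition~\ref{prop:st-genl} together with the uniqueness of uniserials with prescribed top and socle (Proposition~\ref{prop:proj-struct} and Corollary~\ref{cor:top-soc-uniser}), the image of each summand $P_{\as{S}_i}\to M$ is the uniserial $\st(\as{s}_1^+,\as{s}_2^-)$ for $i=1$, the uniserial $\st(\as{s}_{n-1}^-,\as{s}_n^+)$ for $i=n$, and the three-term string $\st(\as{s}_{i-1}^-,\as{s}_i^+,\as{s}_{i+1}^-)$ for interior $i$ with $e_i=+$.

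I would then construct a module map $\phi \colon \st(\tau) \to P$ realizing $\st(\tau)$ as $\Omega M = \ker \pi$. The top of $\st(\tau)$ consists of the simples at positions carrying the $+$ sign in $\tau$, namely $\as{S}_0, \as{S}_2, \as{S}_4, \ldots, \as{S}_{n-1}, \as{S}_{n+1}$. The boundary generators $\as{S}_0$ and $\as{S}_{n+1}$ are sent to the top elements of the legs of $P_{\as{S}_1}$ and $P_{\as{S}_n}$ at the endpoints of $\as{s}_1$ and $\as{s}_n$ away from the successor directions of $\as{s}_2$ and $\as{s}_{n-1}$ respectively; the definition of ``follows'' guarantees that the top composition factors of these legs are precisely $\as{S}_0$ and $\as{S}_{n+1}$. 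For each interior top $\as{S}_i$ of $\st(\tau)$ (with $e_i = -$ in $\sigma$, so that $\as{S}_i$ was a socle component of $M$), the generator is sent to the compatibility element $(w_{i-1}, -w_{i+1}) \in P_{\as{S}_{i-1}} \oplus P_{\as{S}_{i+1}}$, where $w_{i\mp 1}$ is the unique element of $P_{\as{S}_{i\mp 1}}$ lying in the appropriate leg and whose image under $\pi$ is the socle copy of $\as{S}_i$ in $M$. By construction these generators map to zero in $M$ (the boundary ones because they fall outside the image of $P_{\as{S}_1}, P_{\as{S}_n}$; the interior ones by cancellation at the shared socles), so $\phi$ factors through $\Omega M$; extending by $\Ag$-linearity yields a well-defined map.

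It remains to show that $\phi$ is an isomorphism onto $\Omega M$. Injectivity on the generators follows from the biserial structure of each summand of $P$ and the independence of its two legs. Surjectivity can be established either by showing $P/\phi(\st(\tau)) \cong M$ directly, or equivalently by a dimension count $\dim_K\st(\tau) = \dim_K P - \dim_K M$, which reduces to a careful enumeration of composition factors along successor sequences. The main obstacle is verifying that the interior compatibility elements genuinely generate independent tops of $\Omega M$ rather than being absorbed into $\Omega M \cdot \br$; this is precisely where the hypotheses that $\Gamma$ has no loops or multiple edges, $\mm \equiv 1$, and every $\as{s}_i$ with $e_i = +$ is nontruncated are used, as they guarantee via Proposition~\ref{prop:proj-struct} and Lemma~\ref{up-low-uniser} that distinct summands $P_{\as{S}_j}$ share their images in $M$ only at the designated socles, and that no spurious identifications arise between legs. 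Careful tracking of the alternation between legs and successor sequences then produces exactly the claimed acceptable sequence $\tau$ of length $n+2$, and verifies that $\Omega M$ is again a positive string module.
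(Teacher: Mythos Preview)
Your approach is essentially the same as the paper's: identify the projective cover as $\bigoplus_{e_i=+} P_{\as{S}_i}$, use biseriality of each summand (from the nontruncated hypothesis) to see the two legs, and read off the kernel. The paper's proof is in fact much terser than yours---it simply draws the schematic picture of $M$ and of the projective cover side by side and says ``from these diagrams, the reader can easily provide the remaining details''---so your explicit construction of $\phi\colon \st(\tau)\to P$ via boundary leg generators and interior compatibility elements $(w_{i-1},-w_{i+1})$ is a fleshed-out version of what the paper leaves implicit. The one point the paper's diagrammatic sketch makes transparent, and which in your write-up is deferred to the ``main obstacle'' paragraph, is why the interior $\as{S}_i$ really sit in the top of $\Omega M$: in the picture it is immediate that above each shared socle $\as{S}_i$ the two adjacent projectives contribute overlapping uniserials whose diagonal difference cannot be hit from above inside $\ker\pi$. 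Your invocation of Proposition~\ref{prop:proj-struct} and Lemma~\ref{up-low-uniser} is the right way to make this rigorous.
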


\begin{proof}
Since $\st(\sigma)$ is a positive string module, $n$ is an odd integer, say $n=2m+1$. By
assumption, $m\geqslant 1$.  The projective cover of $M$ is $P_{\as{S}_1}\oplus
P_{\as{S}_3}\oplus \cdots \oplus P_{\as{S}_{2m+1}}$ and the socle of $M$ is given by
$\as{S}_2\oplus \as{S}_4\oplus\cdots \oplus \as{S}_{2m}$.  Thus $M$ structurally looks
like:
\[
\xymatrix{  \as{S}_1\ar@{.}[dr]&&\as{S}_3\ar@{.}[ld]\ar@{.}[rd]&&&\cdots&\ar@{.}[dr]&&\as{S}_{n}\ar@{.}[dl]\\
&\as{S}_2&&\as{S}_4\ar@{.}[ur]&&&&\as{S}_{n-1}\ar@{.}[ur]
 }\]
Since $\as{s}_1, \as{s}_3, \dots , \as{s}_{2m+1}$ are all nontruncated edges, the corresponding indecomposable projectives $P_{\as{S}_1}, P_{\as{S}_3}, \dots , P_{\as{S}_{2m+1}}$ are biserial. Thus, from the definition of `follows',
$P_{\as{S}_1}\oplus P_{\as{S}_3}\oplus \cdots \oplus P_{\as{S}_{2m+1}}$ looks like:

{\tiny\[
\xymatrix{ &\as{S}_1\ar@{-}[dl]\ar@{.}[dr]&&\as{S}_3\ar@{.}[dl]\ar@{.}[dr]
&&&&\as{S}_n\ar@{.}[dl]\ar@{-}[dr]
\\
\as{S}_0\ar@{.}[dr] &&\as{S}_2\ar@{.}[dl]\oplus
 \as{S}_2\ar@{.}[dr]&&\as{S}_4\ar@{.}[dl]\oplus&\cdots&\oplus
\as{S}_{n-1}\ar@{.}[dr]&& \as{S}_{n+1}\ar@{.}[dl]
\\
&\as{S}_1&&\as{S}_3&&&&\as{S}_n
 }
\]}
From these diagrams, the reader can easily provide the remaining
details of the proof.
\end{proof}

We assume for the rest of this section that $\Gamma$ contains no truncated
edges.

\bigskip

To describe projective resolutions of simple $\Ag$-modules, we will
need further notation. For ${s}$ an edge in $\Gamma$, we represent the
simple $\Ag$-module ${S}$ by $v_{{s}}(\Ag/\br)$, where $v_{{s}}$ is the vertex
in $\cQ_{\Gamma}$ associated to the edge ${s}$. Here we are viewing
$v_{{s}}$ as the idempotent in $\Ag$ corresponding to the edge ${s}$ in
$\Gamma$. We also set the projective $\Ag$-module $P_{{S}}$ to be
$v_{{s}}(\Ag)$.

Let $\as{s}_0$ be an edge in $\Gamma$. We now present a minimal
projective $\Ag$-resolution of the simple module $v_{\as{s}_0}(\Ag/\br)$,
\[(Q^{\bullet},f^{\bullet}):\quad \cdots \to Q^2\stackrel{f^2}{\to}
Q^1\stackrel{f^1}{\to}Q^0\stackrel{f^0}{\to} v_{\as{s}_0}(\Ag/\br)\to 0
.\] We see that $Q^0= v_{\as{s}_0}(\Ag)$ with $f^0$ being the canonical
surjection and the first syzygy is $\st(\as{s}_{-1}^+,\as{s}_0^-,\as{s}_1^{+})$,
where $\as{s}_{-1}$ and $\as{s}_{1}$ are the successors of $\as{s}_0$ at its
endpoints. Applying Proposition \ref{prop:first-sy} repeatedly, we see
that, if $n$ is odd, then the $n$-th syzygy of $ v_{\as{s}_0}(\Ag/\br)$ is
\[\Omega^n_{\Ag}(v_{\as{s}_0}(\Ag/\br)
)=\st(\as{s}_{-n}^+,\as{s}_{-n+1}^-,\as{s}_{-n+2}^+,\dots,\as{s}_{-1}^+,\as{s}_0^-,\as{s}_1^+,\dots,\as{s}_{n-1}^-,\as{s}_n^+),
\]
and, if $n$ is even, then the $n$-th syzygy of $ v_{\as{s}_0}(\Ag/\br)$ is
\[\Omega^n_{\Ag}(v_{\as{s}_0}(\Ag/\br)
)=\st(\as{s}_{-n}^+,\as{s}_{-n+1}^-,\as{s}_{-n+2}^+,\dots,\as{s}_{-1}^-,\as{s}_0^+,\as{s}_1^-,\dots,\as{s}_{n-1}^-,\as{s}_n^+),
\]
where, for $i=2,\dots,n$, $\as{s}_{-i}$ follows $(\as{s}_{-i+2},\as{s}_{-i+1})$,
and $\as{s}_i$ follows $(\as{s}_{i-2},\as{s}_{i-1})$. From this we obtain the next result.

\begin{prop}\label{prop:projQ}
Keeping the above notation, let $Q^n$ be the $n$-th projective in a minimal projective $\Ag$-resolution
of the simple module $v_{\as{s}_0}(\Ag/\br)$, and let $n > 0$.
If $n$ is odd, \[ Q^n=
P_{\as{S}_{-n}}\oplus P_{\as{S}_{-n+2}}\oplus\cdots \oplus P_{\as{S}_{-1}}\oplus
P_{\as{S}_{1}} \oplus\cdots \oplus P_{\as{S}_{n-2}}\oplus P_{\as{S}_{n}},
\]
and, if $n$ is even, \[ Q^n= P_{\as{S}_{-n}}\oplus
P_{\as{S}_{-n+2}}\oplus\cdots \oplus P_{\as{S}_{-2}}\oplus P_{\as{S}_0}\oplus
P_{\as{S}_{2}} \oplus\cdots \oplus P_{\as{S}_{n-2}}\oplus P_{\as{S}_{n}},
\]
where, $\as{s}_{-1}$ and $\as{s}_{1}$ are the successors of $\as{s}_0$ at its
endpoints, and, for $i=2,\dots,n$, $\as{s}_{-i}$ follows $(\as{s}_{-i+2},\as{s}_{-i+1})$,
and $\as{s}_i$ follows $(\as{s}_{i-2},\as{s}_{i-1})$.
\end{prop}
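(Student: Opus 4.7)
The plan is to prove this by induction on $n$, reading off the projective cover at each stage from the top of the syzygy computed via Proposition~\ref{prop:first-sy}. The key point is that under our standing hypotheses ($\mm\equiv 1$, no loops, no multiple edges, and crucially no truncated edges), every simple module appearing in a top of a syzygy corresponds to a nontruncated edge, so Proposition~\ref{prop:first-sy} can be applied repeatedly.

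For the base step, $Q^0=v_{\as{s}_0}(\Ag)=P_{\as{S}_0}$ with $f^0$ the canonical surjection; the kernel is $\rad(P_{\as{S}_0})$, which by the biserial description of projectives (given at the end of Section~\ref{sec:notation} and recalled via Lemma~\ref{up-low-uniser}) is precisely the string module $\st(\as{s}_{-1}^+,\as{s}_0^-,\as{s}_1^+)$, where $\as{s}_{-1}$ and $\as{s}_1$ are the two successors of $\as{s}_0$ at its two endpoints. This gives $Q^1=P_{\as{S}_{-1}}\oplus P_{\as{S}_1}$, matching the claimed formula for $n=1$.

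For the inductive step, suppose that $\Omega^{n-1}(v_{\as{s}_0}(\Ag/\br))$ has the form asserted in the displayed formulas preceding the proposition (a positive string module on the indices $\as{s}_{-(n-1)},\ldots,\as{s}_{n-1}$ with alternating signs and both endpoints $+$). Since $\Gamma$ has no truncated edges, every edge $\as{s}_i$ in the index set is nontruncated, so the hypotheses of Proposition~\ref{prop:first-sy} are met. Applying that proposition yields $\Omega^n(v_{\as{s}_0}(\Ag/\br))$ as the positive string module on $\as{s}_{-n},\ldots,\as{s}_n$, where $\as{s}_{-n}$ follows $(\as{s}_{-n+2},\as{s}_{-n+1})$ and $\as{s}_n$ follows $(\as{s}_{n-2},\as{s}_{n-1})$, and where the signs on the middle indices flip. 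Reading off the positions carrying sign $+$ in this new syzygy immediately gives the projective cover $Q^n$ as stated: for $n$ odd the $+$ positions are $-n,-n+2,\ldots,-1,1,\ldots,n$, while for $n$ even they are $-n,-n+2,\ldots,-2,0,2,\ldots,n$.

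There is no real obstacle here beyond bookkeeping: once the first syzygy is identified as $\st(\as{s}_{-1}^+,\as{s}_0^-,\as{s}_1^+)$, the proposition is essentially a formal consequence of iterating Proposition~\ref{prop:first-sy}. The only subtlety worth noting is parity tracking --- the top of a positive string module $\st(\as{s}_1^{e_1},\ldots,\as{s}_k^{e_k})$ consists of exactly those $\as{S}_i$ with $e_i=+$, so distinguishing the even and odd cases for $n$ is necessary to match the two stated formulas, but this is purely combinatorial.
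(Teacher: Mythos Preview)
Your proof is correct and follows essentially the same approach as the paper: the paper identifies the first syzygy as $\st(\as{s}_{-1}^+,\as{s}_0^-,\as{s}_1^+)$ and then applies Proposition~\ref{prop:first-sy} repeatedly to obtain the syzygies, reading off the projective covers from their tops. Your inductive framing and explicit parity bookkeeping make the argument slightly more detailed, but the substance is identical.
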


It remains to describe the maps $f^n$ in the projective resolution.
We recall from Section~\ref{sec:coverings} that, if edge $t$ is the successor of edge
$s$ in $\Gamma$ at vertex $\alpha$, then we denote the corresponding
arrow in $\cQ_{\Gamma}$ from vertex $v_s$ to vertex $v_t$
by $a(s,t)$, since there are no loops or multiple edges in $\Gamma$. \label{convention on arrows}
Suppose that $s=s_0,s_1,s_2,\dots,s_{n-1}$ is the successor sequence for $s$ at the
vertex $\alpha$ in $\Gamma$, and set $s_n = s_0$.  If $1\leqslant k\leqslant n-1$, we denote the
path $a(s_0,s_1)a(s_1,s_2)\cdots a(s_{k-1},s_k)$, from
$v_{s_0}$ to $v_{s_k}$ in $\cQ_{\Gamma}$, by $p(s_0,s_k)$. Note that our
assumptions on $\Gamma$ show that $p(s_0,s_k)$ is well-defined.

\begin{lemma}\label{lem:prod} Suppose that $s$ is an edge in $\Gamma$
with endpoints $\alpha$ and $\beta$ and that
$s=s_0,s_1,\dots,s_{n-1}$ is the successor sequence for $s$ at
$\alpha$. Let $s_n = s_0$ since $s_0$ is the successor of $s_{n-1}$.
Assume that $t$ is in the successor sequence of $s$ at $\beta$.
\begin{enumerate}[\bf (1)]
\item If $0\leqslant i<j<k\leqslant n$, then
$p(s_i,s_j)p(s_j,s_k)=p(s_i,s_k)\ne 0$.
\item If $1\leqslant i\leqslant n$, then $p(t,s_0)p(s_0,s_i)=0$.
\item If $0\leqslant i\leqslant n-1$, then $p(s_i,s_n)p(s_0,t)=0$.
\end{enumerate}
\end{lemma}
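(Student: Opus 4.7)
My plan is to derive all three statements directly from the relations defining $\Ig$, exploiting the fact that since $\Gamma$ has no truncated edges and $\mm\equiv 1$, only the type one and type three relations are in play, and each cycle $C_{u,\gamma}$ has length exactly $\val(\gamma)$. The central observation is that every $p(s_i,s_j)$ with $0\leqslant i<j\leqslant n$ is a consecutive subpath of $C_{s,\alpha}$, whereas every $p(t,s_0)$ or $p(s_0,t)$ with $t$ in the successor sequence of $s$ at $\beta$ is a consecutive subpath of the distinct cycle $C_{s,\beta}$.

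For part (1), the concatenation $p(s_i,s_j)\,p(s_j,s_k)$ is, arrow by arrow, equal to $a(s_i,s_{i+1})\cdots a(s_{k-1},s_k)$, which is exactly $p(s_i,s_k)$. To see that $p(s_i,s_k)\neq 0$ in $\Ag$, observe that it has length $k-i\leqslant n$ and sits inside $C_{s_i,\alpha}$, so every length-two subpath of it is consecutive in that cycle and no type three relation can apply. The only type one relation that could touch it identifies $C_{s_i,\alpha}$ with the cycle at the other endpoint of $s_i$, and this identification does not produce zero.

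For parts (2) and (3), the strategy in each case is to isolate the two-arrow subpath sitting at the junction vertex $v_s$ and show it is not a subpath of any cycle $C_{u,\gamma}$; it is then a relation of type three, and the whole product vanishes. Writing $s=t_0,t_1,\dots,t_{m-1}$ for the successor sequence at $\beta$, the junction subpath for (2) is $a(t_{m-1},s)\,a(s,s_1)$, and the junction subpath for (3) is $a(s_{n-1},s)\,a(s,t_1)$. The main (but brief) obstacle is the cycle-subpath verification. For (2), the outgoing arrow $a(s,s_1)$ occurs only in $C_{s,\alpha}$, where its unique predecessor is $a(s_{n-1},s)$; symmetrically, $a(t_{m-1},s)$ occurs only in $C_{s,\beta}$, where its unique successor is $a(s,t_1)$. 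Hence the junction two-path lies in some cycle only if $t_{m-1}=s_{n-1}$ or $s_1=t_1$, and each equality would produce an edge distinct from $s$ with endpoints $\alpha$ and $\beta$, contradicting the no-multiple-edges hypothesis. The argument for (3) is the mirror image, interchanging the roles of the two endpoints.
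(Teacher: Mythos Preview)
Your proof is correct and follows essentially the same approach as the paper's. The paper's argument is terser---it simply notes that $p(s_i,s_k)$ is a factor of $C_{s,\alpha}$ for part \textbf{(1)} and that the two paths in \textbf{(2)} and \textbf{(3)} are associated to successor sequences at different vertices---whereas you explicitly isolate the junction two-arrow path and verify it is a type three relation by invoking the no-multiple-edges hypothesis; but the underlying idea is the same.
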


\begin{proof} To prove {\bf (1)}, we note that $p(s_i,s_j)=
a(s_{i},s_{i+1}) \cdots a(s_{j-1},s_{j})$ and $p(s_j,s_k)=
a(s_{j},s_{j+1}) \cdots a(s_{k-1},s_{k})$. Hence
$p(s_i,s_j)p(s_j,s_k)=p(s_i,s_k)$. That $p(s_i,s_k)\ne 0$ follows
from the relations defining $I_{\Gamma}$ and the fact that $p(s_i,s_k)$ is a
factor of $C_{s,\alpha}$.

The other parts follow from the relations defining $I_{\Gamma}$, and the fact
that $p(s_0,s_i)$ and $p(s_i,s_n)$ are associated to the successor sequence for $s$ at
vertex $\alpha$, whereas the paths $p(t,s_0)$ and $p(s_0,t)$ are associated
to the successor sequence for $s$ at vertex $\beta$.
\end{proof}

We are now in a position to define the maps $f^n\colon Q^n\to
Q^{n-1}$, for $n\geqslant 0$. The map $f^0\colon v_{\as{s}_0}(\Ag)\to
v_{\as{s}_0}(\Ag/\br)$ is the canonical surjection.  Recall that, for
each edge $s\in\Gamma$, we are setting $P_{S}=v_{s}(\Ag)$.  Using
our description of the projective module $Q^n$ in Proposition~\ref{prop:projQ}, for
$n\geqslant 1$, we write $Q^n$ as an $(n+1) \times 1$ column vector. Then
$f^n$ will be given as an $n\times (n+1)$ matrix with the
$(i,j)$-th entry in $v_{\as{s}_{-n+2i-1}}(\Ag) v_{\as{s}_{-n+2j-2}}$,
representing a map from $P_{\as{S}_{-n+2j-2}}= v_{\as{s}_{-n+2j-2}}(\Ag)$ to
$P_{\as{S}_{-n+2i-1}}=v_{\as{s}_{-n+2i-1}}(\Ag)$.

The map $f^1\colon Q^1\to
Q^0$ is given by the $1\times 2$ matrix
\[\left(\begin{array}{cc} p(\as{s}_0,\as{s}_{-1})& p(\as{s}_0,\as{s}_{1})
\end{array}\right),\]
where $\as{s}_{-1}$ and $\as{s}_1$ are the successors of $\as{s}_0$ at its endpoints,
so that $p(\as{s}_0,\as{s}_{-1})=a(\as{s}_0,\as{s}_{-1})$ and $p(\as{s}_0,\as{s}_{1})= a(\as{s}_0,\as{s}_1)$.

For $n\geqslant 2$, $f^n$ is given by the matrix

{\tiny
\[\left(\begin{array}{cccccc}
(-1)^{n-1}p(\as{s}_{-n+1},\as{s}_{-n}) & p(\as{s}_{-n+1},\as{s}_{-n+2})&0&\cdots&0&0\\
0&(-1)^{n-1}p(\as{s}_{-n+3},\as{s}_{-n+2}) & p(\as{s}_{-n+3},\as{s}_{-n+4})&&0&0\\
0&0&(-1)^{n-1}p(\as{s}_{-n+5},\as{s}_{-n+4}) & \cdots&0&0\\
0&0&0&&0&0\\
\vdots &\vdots&\vdots&&\vdots&\vdots\\
 &&&&p(\as{s}_{n-3},\as{s}_{n-2})&0\\
 0&0&0&\cdots&(-1)^{n-1}p(\as{s}_{n-1},\as{s}_{n-2})&p(\as{s}_{n-1},\as{s}_n)
\end{array}\right).
\]}

We now come to the main result of this section, which shows that we have indeed described
a minimal projective $\Ag$-resolution of the simple module $v_{\as{s}_0}(\Ag/\br)$.

\begin{thm}\label{thm:min-resol}
Let  $(\Gamma,\mathfrak o,\mm)$ with $\mm\equiv 1$ be a Brauer graph with no
loops or multiple edges and no truncated edges, and let
$\cA_{\Gamma}$ denote the associated Brauer graph algebra. Let $\as{s}_0$
be an edge in $\Gamma$ and
\[(Q^{\bullet},f^{\bullet}):\quad \cdots \to Q^2\stackrel{f^2}{\to}
Q^1\stackrel{f^1}{\to}Q^0\stackrel{f^0}{\to} v_{\as{s}_0}(\Ag/\br)\to 0
\] be as given above.  Then $(Q^{\bullet},f^{\bullet})$ is a
minimal projective  $\Ag$-resolution of $v_{\as{s}_0}(\Ag/\br)$.
\end{thm}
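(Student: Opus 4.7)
The plan is to verify three properties of the candidate complex $(Q^\bullet,f^\bullet)$: that it is a complex, i.e.\ $f^{n-1}\circ f^n=0$; that it is exact; and that it is minimal, i.e.\ every entry of each $f^n$ lies in $\br$. Minimality will be immediate from the definition, because each entry $p(\hat{s}_i,\hat{s}_j)$ is a path of positive length in $\cQ_\Gamma$. The iteration of Proposition~\ref{prop:first-sy} already produces the correct $n$-th syzygy $\Omega^n(v_{\hat{s}_0}(\cA_\Gamma/\br))=\st(\tau^n)$ of the simple $v_{\hat{s}_0}(\cA_\Gamma/\br)$, and the explicit $Q^n$ of Proposition~\ref{prop:projQ} is the projective cover of $\Omega^n$; thus we already know the dimensions and the structure of the kernels in the hypothetical resolution, and the real content of the theorem is that the specific maps $f^n$ realize this.

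To prove the complex property, one computes the product of the $(n-1)\times n$ matrix $f^{n-1}$ with the $n\times(n+1)$ matrix $f^n$. Since both matrices are bidiagonal, only three bands of the product can be nonzero. A diagonal entry $(f^{n-1}f^n)_{i,i}$ is, up to a sign, $p(\hat{s}_{-n+2i},\hat{s}_{-n+2i-1})\,p(\hat{s}_{-n+2i-1},\hat{s}_{-n+2i-2})$; here $\hat{s}_{-n+2i-2}$ follows $(\hat{s}_{-n+2i},\hat{s}_{-n+2i-1})$, so the first path is taken around one endpoint of $\hat{s}_{-n+2i-1}$ while the second is taken around the other endpoint, and the product vanishes by Lemma~\ref{lem:prod}{\bf (2)} or {\bf (3)}. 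The super-super-diagonal entry $(f^{n-1}f^n)_{i,i+2}$ is handled identically. The super-diagonal entry $(f^{n-1}f^n)_{i,i+1}$ is the sum of two full cycles $\pm C_{\hat{s}_{-n+2i},\alpha}$ and $\mp C_{\hat{s}_{-n+2i},\beta}$ at the two endpoints of $\hat{s}_{-n+2i}$ with opposite signs, and these cancel because the type one relation gives $C_{\hat{s}_{-n+2i},\alpha}=C_{\hat{s}_{-n+2i},\beta}$ in $\cA_\Gamma$ (using $\mm\equiv 1$ and $\mq\equiv 1$).

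For exactness I would argue by induction on $n$. The case $n=0$ follows from the definition of $f^0$, and the case $n=1$ is clear because $\im f^1$ is generated by the two arrows out of $v_{\hat{s}_0}$ and hence equals $\rad P_{\hat{S}_0}=\ker f^0$. Assume by induction that $\im f^{n-1}=\ker f^{n-2}$, so that $\ker f^{n-1}\cong\Omega^n$ sits inside $Q^{n-1}$. Since $\im f^n\subseteq\ker f^{n-1}$ by the complex property and $Q^n$ is the projective cover of $\Omega^n$, it suffices by Nakayama's lemma to check that the induced map from the top of $Q^n$ to the top of $\ker f^{n-1}$ is surjective. Going column by column in the matrix for $f^n$, the image of the $j$-th projective generator $e_j\in Q^n$ is $p(\hat{s}_{-n+2j-3},\hat{s}_{-n+2j-2})\,e^{(n-1)}_{j-1}+(-1)^{n-1}p(\hat{s}_{-n+2j-1},\hat{s}_{-n+2j-2})\,e^{(n-1)}_{j}$, which I would match with the description of the $j$-th top summand of the string module $\st(\tau^n)$ sitting inside $Q^{n-1}$ given by Proposition~\ref{prop:first-sy} and the diagrammatic description of syzygies there. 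The main obstacle is precisely this bookkeeping: one must keep track of the two possible orientations ``follows from the left endpoint'' versus ``follows from the right endpoint'' and verify that the alternating signs $(-1)^{n-1}$ in the matrix are exactly those needed so that the columns of $f^n$ land on the two adjacent nonzero peaks of the syzygy string, rather than on some linearly dependent configuration; once this match is made the Nakayama argument closes the induction and gives exactness together with minimality.
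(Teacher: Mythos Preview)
Your proposal is correct and follows essentially the same approach as the paper: verify $f^{n-1}f^n=0$ by the bidiagonal matrix product (off-diagonal products vanish by Lemma~\ref{lem:prod}, the middle band by the type one relations), argue exactness by induction using that $\Ker f^{n-1}\cong\Omega^n$ and checking the map on tops, and read off minimality from the fact that the entries are positive-length paths. The one place where the paper is slightly more efficient is the step you flag as ``bookkeeping'': rather than tracking signs column by column, the paper simply observes that the uniqueness of simple composition factors in indecomposable projectives (Lemma~\ref{up-low-uniser}{\bf (3)}) together with the description of $\Omega^n$ in Proposition~\ref{prop:first-sy} forces the induced map $\Top(Q^n)\to\Top(\Ker f^{n-1})$ to be an isomorphism, so no explicit sign-matching is needed.
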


\begin{proof}
We begin by showing that $f^{n-1}\circ f^n=0$, for
$n\geqslant 1$. For $n=1$ this is clear since, from the definitions of
$f^0$ and $f^1$, we see that
$\Im(f^1)=v_{\as{s}_{0}}\br=\st(\as{s}_{-1}^+,\as{s}_0^-,\as{s}_1^+)=\Ker(f^0)$.
That $f^1\circ f^2=0$ can be proved directly from the matrices. So
assume $n\geqslant 3$.

Let $A$ be the matrix representing $f^n$

{\tiny
\[\left(\begin{array}{cccccc}
(-1)^{n-1}p(\as{s}_{-n+1},\as{s}_{-n}) & p(\as{s}_{-n+1},\as{s}_{-n+2})&0&\cdots&0&0\\
0&(-1)^{n-1}p(\as{s}_{-n+3},\as{s}_{-n+2}) & p(\as{s}_{-n+3},\as{s}_{-n+4})&&0&0\\
0&0&(-1)^{n-1}p(\as{s}_{-n+5},\as{s}_{-n+4}) & \cdots&0&0\\
0&0&0&&0&0\\
\vdots &\vdots&\vdots&&\vdots&\vdots\\
 &&&&p(\as{s}_{n-3},\as{s}_{n-2})&0\\
 0&0&0&\cdots&(-1)^{n-1}p(\as{s}_{n-1},\as{s}_{n-2})&p(\as{s}_{n-1},\as{s}_n)
\end{array}\right)
\]}
and $B$ be the matrix representing $f^{n-1}${\tiny
\[\left(\begin{array}{cccccc}
(-1)^{n-2}p(\as{s}_{-n+2},\as{s}_{-n+1}) & p(\as{s}_{-n+2},\as{s}_{-n+3})&0&\cdots&0&0\\
0&(-1)^{n-2}p(\as{s}_{-n+4},\as{s}_{-n+3}) & p(\as{s}_{-n+4},\as{s}_{-n+5})&&0&0\\
0&0&(-1)^{n-2}p(\as{s}_{-n+6},\as{s}_{-n+5}) & \cdots&0&0\\
0&0&0&&0&0\\
\vdots &\vdots&\vdots&&\vdots&\vdots\\
 &&&&p(\as{s}_{n-4},\as{s}_{n-3})&0\\
 0&0&0&\cdots&(-1)^{n-2}p(\as{s}_{n-2},\as{s}_{n-3})&p(\as{s}_{n-2},\as{s}_{n-1})
\end{array}\right).
\]}
We show $BA$ is the zero matrix.  The $(1,1)$-entry of $BA$
is ${-p(\as{s}_{-n+2},\as{s}_{-n+1})p(\as{s}_{-n+1},\as{s}_{-n)}}$. But $\as{s}_{-n+2}$
and $\as{s}_{-n}$ are in the successor sequences for $\as{s}_{-n+1}$ at different
vertices.  Hence $p(\as{s}_{-n+2},\as{s}_{-n+1})p(\as{s}_{-n+1},\as{s}_{-n)}=0$ by
Lemma~\ref{lem:prod}{\bf (3)}. The remaining entries of the first column in $BA$
are all $0$.

The $(1,2)$-entry of $BA$ is
\[(-1)^{n-2}p(\as{s}_{-n+2},\as{s}_{-n+1}) p(\as{s}_{-n+1},\as{s}_{-n+2}) + (-1)^{n-1}p(\as{s}_{-n+2},\as{s}_{-n+3})
 p(\as{s}_{-n+3},\as{s}_{-n+2}).\]
Suppose the endpoints of $\as{s}_{-n+2}$ are $\alpha$ and $\beta$ in $\Gamma$.
If $\as{s}_{-n+1}$ is in the successor sequence of $\as{s}_{-n+2}$ at the vertex $\alpha$,
then $p(\as{s}_{-n+2},\as{s}_{-n+1})p(\as{s}_{-n+1},\as{s}_{-n+2}) = C_{\as{s}_{-n+2},\alpha}$. We must also have that
$\as{s}_{-n+3}$ is in the successor sequence of $\as{s}_{-n+2}$ at the vertex $\beta$,
and $p(\as{s}_{-n+2},\as{s}_{-n+3})p(\as{s}_{-n+3},\as{s}_{-n+2}) = C_{\as{s}_{-n+2},\beta}$.
Hence we see that the $(1,2)$-entry of $BA$ is
$(-1)^{n-2}(C_{\as{s}_{-n+2},\alpha} - C_{\as{s}_{-n+2},\beta})$ which is a scalar multiple of a
relation of type one in $I_{\Gamma}$ and hence $0$.

\sloppy The $(2,2)$-entry of $BA$ is $(-1)^{n-2}p(\as{s}_{-n+4},\as{s}_{-n+3})(-1)^{n-1}p(\as{s}_{-n+3},\as{s}_{-n+2})$.
But $\as{s}_{-n+4}$ and
$\as{s}_{-n+2}$ are in the successor sequences for $\as{s}_{-n+3}$ at different
vertices.  Hence $p(\as{s}_{-n+4},\as{s}_{-n+3})p(\as{s}_{-n+3},\as{s}_{-n+2})=0$ by Lemma \ref{lem:prod}. The remaining entries of
the second column in $BA$ are all $0$.

This alternating pattern continues for the remaining columns and we have
shown $BA$ is the zero matrix. Thus $\Im(f^n)\subseteq
\Ker(f^{n-1})$.

To show equality, we note that the top of $Q^n$ maps into
$\Ker(f^{n-1})$.  Inductively, we may assume that $\Ker(f^{n-1})$ is
isomorphic to $\Omega^{n}_{\Ag}(v_{\as{s}_0}(\Ag/\br))$.  The uniqueness of the simple
composition factors of indecomposable projective $\Ag$-modules given
in Lemma~\ref{up-low-uniser}{\bf (3)}, together with the structure of the
syzygies given in Proposition~\ref{prop:first-sy}, show that the top of
$Q^n$ maps isomorphically to the top of $\Ker(f^{n-1})$. Hence $\Im(f^n) =
\Ker(f^{n-1})$.

Since, for $n\geqslant 1$, the image of $f^n$ is contained in $Q^{n-1}\br$,
the resolution is minimal and the proof is complete.
\end{proof}

\section{The Ext algebra of a Brauer graph algebra with no truncated edges}\label{sec:coh-ring}

In this section, we assume that $(\Gamma,\mfo,\mm)$
is a Brauer graph with no truncated edges. We prove one of the main results of this paper, showing that the Ext algebra of the associated Brauer graph algebra $\cA_\Gamma$ is finitely generated in degrees 0, 1 and 2.

Let $G$ be a finite abelian group and let
$W\colon \cZ_{\Gamma}\to G$ be a Brauer weighting such that the
Brauer covering graph $(\Gamma_W,\mathfrak o_W,\mm_W)$ has $\mm_W\equiv 1$ and no loops or
multiple edges (see Section~\ref{sec:coverings}). Suppose that $s$ is an edge in $\Gamma$ incident with vertex $\alpha$ in
$\Gamma$, and $s_g$ is an edge in $\Gamma_W$ incident with vertex $\alpha_g$ in $\Gamma_W$, such that
$s_g$ lies over $s$ and $\alpha_g$ lies over
$\alpha$ for some $g\in G$. Then, by \cite[Proposition 4.4 and Definition 4.5]{GSS},
$\mm(\alpha)\val_{\Gamma}(\alpha)=
\mm_W(\alpha_g)\val_{\Gamma_W}(\alpha_g)$, where
$\val_{\Gamma}(\alpha)$ and $\val_{\Gamma_W}(\alpha_g)$ denote the
valencies of $\alpha$ and $\alpha_g$ respectively. It follows that
edge $s$ in $\Gamma$ is truncated at vertex $\alpha$ in $\Gamma$ if and only if edge
$s_g$ in $\Gamma_W$ is truncated at vertex $\alpha_g$
in $\Gamma_W$.
Thus, to study the Ext algebra of a Brauer graph algebra associated to a Brauer
graph $(\Gamma,\mfo,\mm)$ with no truncated edges, it follows from the above
discussion and Proposition~\ref{prop:ext-struct} that we may assume $(\Gamma,\mfo,\mm)$ is a Brauer graph with $\mm\equiv 1$, with no loops or multiple edges and no truncated edges.

Let $\Ag$ denote
the associated Brauer graph algebra and let
$\br$ denote the Jacobson radical of $\Ag$. The Ext algebra of $\Ag$ is $E(\Ag) = \oplus_{n\geqslant
0}\Ext^n_{\Ag}(\Ag/\br,\Ag/\br)$ with the Yoneda product.
Let $\as{s}_0$ be an edge in $\Gamma$ and $\as{S}_0=v_{\as{s}_0}(\Ag/\br)$
the associated simple $\Ag$-module, where $v_{\as{s}_0}$ is the
idempotent in $\Ag$ associated to $\as{s}_0$. Let
\[(Q^{\bullet},f^{\bullet}):\quad \cdots \to Q^2\stackrel{f^2}{\to}
Q^1\stackrel{f^1}{\to}Q^0\stackrel{f^0}{\to} v_{\as{s}_0}(\Ag/\br)\to 0
\]
be the minimal projective $\Ag$-resolution of $\as{S}_0$ given in Theorem~\ref{thm:min-resol}, with \[Q^n=\begin{cases}P_{\as{S}_{-n}}\oplus
P_{\as{S}_{-n+2}}\oplus\cdots\oplus P_{\as{S}_{-2}}\oplus P_{\as{S}_0}\oplus
P_{\as{S}_2}\oplus\cdots\oplus P_{\as{S}_n},&\text{for $n$ even}\\
P_{\as{S}_{-n}}\oplus P_{\as{S}_{-n+2}}\oplus\cdots\oplus P_{\as{S}_{-1}}\oplus
P_{\as{S}_1}\oplus\cdots\oplus P_{\as{S}_n},&\text{for $n$ odd.}
\end{cases}
\]
Since each $P_{\as{S}}$ is an indecomposable projective $\Ag$-module,
we choose a $K$-basis $G^n_i(\as{S}_0)$, where $i\in\{-n,-n+2,\dots, n-2,n\}$, for
$\Ext^n_{\Ag}(\as{S}_0,\Ag/\br)$, where $G^n_i(\as{S}_0)$ represents the
element in $\Ext^n_{\Ag}(\as{S}_0,\Ag/\br)$ given by the composition
\[ Q^n\to P_{\as{S}_i}\to \as{S}_i\to \Ag/\br,\] where the first map is
the projection map, the second map is the canonical surjection, and
the third map is inclusion.  We call
$\{G_i^n(\as{S}_0)\}_{i\in\{-n,-n+2,\dots, n-2,n\}}$ the \emph{canonical basis
of $\Ext^n_{\Ag}(\as{S}_0,\Ag/\br)$}. Since
$\Ag/\br=\oplus_{\as{s}_0\in\Gamma_1}\as{S}_0$, so that
\[\Ext^n_{\Ag}(\Ag/\br,\Ag/\br)=\bigoplus_{\as{s}_0\in\Gamma_1}\Ext^n_{\Ag}(\as{S}_0,\Ag/\br)
,\] we abuse notation and view
\[
\cG^n=\bigcup_{\as{s}_0\in\Gamma_1}\{G_i^n(\as{S}_0)\}_{i\in\{-n,-n+2,\dots,
n-2,n\}}
\]
as a $K$-basis of  $\Ext^n_{\Ag}(\Ag/\br,\Ag/\br)$.

We now present the main result of this section.

\begin{thm}\label{thm:fg-ext}
Let $(\Gamma,\mfo,\mm)$ be a Brauer graph with no truncated edges, and let $\Ag$ denote the associated Brauer graph
algebra. Then the Ext algebra, $E(\Ag)$, is finitely generated in degrees $0, 1$ and $2$.
\end{thm}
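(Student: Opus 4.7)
The strategy is to prove, by induction on $n\geqslant 3$, that every canonical basis element $G^n_i(\as{S}_0)\in\cG^n$ can be written as a Yoneda product of two basis elements of strictly smaller positive degree; since $\cG^0$, $\cG^1$, $\cG^2$ are $K$-bases of $\Ext^0$, $\Ext^1$, $\Ext^2$ respectively, this will immediately yield a generating set of $E(\Ag)$ concentrated in degrees $0$, $1$ and $2$. By Proposition~\ref{prop:ext-struct}, the covering reduction of Section~\ref{sec:coverings}, and Corollary~\ref{cor:reduction of quantizing function}, I may assume $\mm\equiv 1$, that $\Gamma$ has no loops or multiple edges, and that $\mq\equiv 1$, and thus use the explicit minimal projective resolution of Theorem~\ref{thm:min-resol}.

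To execute the induction I need explicit chain-level lifts of the canonical basis elements of degrees $1$ and $2$. I would first construct, for each $\varepsilon\in\{+1,-1\}$ and each edge $\as{s}_0$, a chain lift $\widetilde{G}^1_{\varepsilon}(\as{S}_0)\colon Q^{\bullet+1}_{\as{S}_0}\to Q^{\bullet}_{\as{S}_{\varepsilon}}$ of $G^1_{\varepsilon}(\as{S}_0)$, and similarly lifts of the three degree two basis elements $G^2_j(\as{S}_0)$ for $j\in\{-2,0,2\}$. The key combinatorial observation is that, with consistent choices of initial successors, the walk defining $Q^{\bullet}_{\as{S}_{\varepsilon}}$ agrees with a shift of the walk defining $Q^{\bullet}_{\as{S}_0}$ on the half in the direction $\varepsilon$, so on that half a natural lift is given by an identity-type map between matching projective summands. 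On the other half the two walks genuinely differ---they diverge at the shared vertex when its valency exceeds two---and the lift must be filled in by entries $p(\as{s}_i,\as{s}_j)$ of the kind studied in Lemma~\ref{lem:prod}. The chain-map condition $f^{\bullet}_{\as{S}_{\varepsilon}}\circ\widetilde{G}^{\bullet}_{\varepsilon}=\widetilde{G}^{\bullet-1}_{\varepsilon}\circ f^{\bullet}_{\as{S}_0}$ would then be verified entrywise using Lemma~\ref{lem:prod}: wrong-vertex compositions vanish by parts (2) and (3), adjacent correct-vertex compositions telescope by part (1), and the alternating $(-1)^{n-1}$ pattern in $f^n$ produces the required signs.

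With these lifts in hand I would compute the Yoneda products $G^1_{\pm 1}(\as{S}_i)\cdot G^{n-1}_i(\as{S}_0)$ and $G^2_j(\as{S}_i)\cdot G^{n-2}_i(\as{S}_0)$ for $j\in\{-2,0,2\}$. The extreme elements $G^n_{\pm n}(\as{S}_0)$ should factor as $G^1_{\pm 1}(\as{S}_{\pm(n-1)})\cdot G^{n-1}_{\pm(n-1)}(\as{S}_0)$, since the walks from $\as{S}_0$ and from the endpoint simple $\as{S}_{\pm(n-1)}$ coincide on the outward-pointing side. For interior slots $G^n_i(\as{S}_0)$ with $|i|<n$ the degree two central element $G^2_0$ becomes essential: the walks from $\as{S}_0$ and from an adjacent $\as{S}_j$ generically diverge at the vertex they share, which prevents many interior basis elements from being reached by chains of degree one compositions; on the other hand the product $G^2_0(\as{S}_0)\cdot G^{n-2}_0(\as{S}_0)$ realises $G^n_0(\as{S}_0)$, and products of the form $G^1_{\pm 1}(\as{S}_i)\cdot G^{n-1}_i(\as{S}_0)$ together with products involving $G^2_j$ should account for the remaining interior elements at every parity.

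The principal obstacle is the entrywise verification of the chain-map conditions for the lifts, which must be genuine identities in $\Ag$ (not merely cohomological) so that the subsequent Yoneda products of canonical basis elements emerge as nonzero scalar multiples of single canonical basis elements rather than arbitrary linear combinations. The hypothesis that $\Gamma$ has no truncated edges underpins every step: it guarantees that every projective $P_{\as{S}_i}$ appearing in every resolution is biserial, so the walks driving the resolutions can be prolonged indefinitely in both directions and the matrix descriptions of the differentials and their lifts remain well-defined across all degrees. A final verification that the scalars arising in each product are nonzero completes the induction and shows that $\cG^0\cup\cG^1\cup\cG^2$ generates $E(\Ag)$.
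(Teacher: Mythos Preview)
Your overall strategy matches the paper's: reduce to $\mm\equiv 1$, no loops or multiple edges, $\mq\equiv 1$, use the explicit minimal resolution of Theorem~\ref{thm:min-resol}, and show by induction that every canonical basis element $G^n_i(\as{S}_0)$ with $n\geqslant 3$ is a Yoneda product of basis elements of strictly smaller degree, using degree~$1$ factors for the off-centre indices and the degree~$2$ element $G^2_0$ for the central index $i=0$ (when $n$ is even). The decomposition you propose, $G^n_i(\as{S}_0)=G^1_{\pm 1}(\as{S}_{i\mp 1})\cdot G^{n-1}_{i\mp 1}(\as{S}_0)$ for $i\neq 0$ and $G^n_0(\as{S}_0)=G^2_0(\as{S}_0)\cdot G^{n-2}_0(\as{S}_0)$, is exactly what the paper proves.

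The mechanics differ, however, and the paper's route is considerably lighter. You propose to build \emph{full} chain lifts $\widetilde{G}^1_{\varepsilon}(\as{S}_0)\colon Q^{\bullet+1}_{\as{S}_0}\to Q^{\bullet}_{\as{S}_{\varepsilon}}$ of the degree~$1$ (and degree~$2$) elements; as you yourself note, the walks based at $\as{s}_0$ and at $\as{s}_{\varepsilon}$ diverge on one side whenever the shared vertex has valency greater than two, and filling in those entries at every level is the ``principal obstacle'' you identify. The paper sidesteps this entirely by lifting the \emph{high}-degree factor instead of the low-degree one, and only by the one or two steps needed: to compute $G^{n-1}_{i-1}(\as{S}_0)*G^1_i(\as{S}_{i-1})$ it constructs just $\psi_0\colon Q^{n-1}_{\as{S}_0}\to R^0$ and $\psi_1\colon Q^n_{\as{S}_0}\to R^1$, where $\psi_1$ is a $2\times(n+1)$ matrix with exactly two nonzero entries (one identity and one path $p(\as{t}_{-1},\as{s}_{i-2})$); for the central product it uses the obvious projections $\theta_j\colon Q^{n-j}_{\as{S}_0}\to Q^{2-j}_{\as{S}_0}$. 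No induction on the lifts, no divergence bookkeeping, and the resulting composite is visibly equal to $G^n_i(\as{S}_0)$ on the nose (no scalar to check). Your plan is sound, but the paper's choice of which factor to lift turns the ``principal obstacle'' into a two-line computation.
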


\begin{proof}
From the above discussion, we may assume $(\Gamma,\mfo,\mm)$ is a Brauer graph with $\mm\equiv 1$, with no loops or multiple edges and no truncated edges. Fix an edge $\as{s}_0$ in $\Gamma$ with associated simple $\Ag$-module
$\as{S}_0$. We keep the previous notation. Since $\Ag/\br=\oplus_{t\in\Gamma_1}T$, we have
$\Ext^n_{\Ag}(\as{S}_0,\Ag/\br)=\oplus_{t\in\Gamma_1}\Ext^n_{\Ag}(\as{S}_0,T)$,
and hence, for $i\in\{-n,-n+2,\dots,n-2,n\}$, we may view
$G^n_i(\as{S}_0)$ as a map $G^n_i(\as{S}_0)\colon Q^n\to \as{S}_i$.

First we suppose that $n\geqslant 2$ is an even integer. Let
$i\in\{2,4,\dots,n-2,n\}$, and let
\[(R^{\bullet},g^{\bullet}):\quad \cdots \to R^2\stackrel{g^2}{\to}
R^1\stackrel{g^1}{\to}R^0\stackrel{g^0}{\to} v_{\as{s}_{i-1}}(\Ag/\br)\to
0
\]
be the minimal projective $\Ag$-resolution of $\as{S}_{i-1}$ given in
Theorem~\ref{thm:min-resol}.  We show that
\[ G^n_i(\as{S}_0)=G^{n-1}_{i-1}(\as{S}_0)*G^1_i(\as{S}_{i-1}),\]
where the right hand side is viewed in the Yoneda product
$\Ext^1_{\Ag}(\as{S}_{i-1},\as{S}_i)\times \Ext^{n-1}_{\Ag}(\as{S}_{0},\as{S}_{i-1})$.
For ease of notation and consistency, we set $\as{t}_0=\as{s}_{i-1}$ (noting that $i \neq 0$). Let
$\as{t}_1$ and $\as{t}_{-1}$ be the edges in $\Gamma$ that are the successors
of $\as{t}_0$ at the endpoints of $\as{t}_0$.  Define the sequence
$\as{t}_{-n},\as{t}_{-n+1},\dots,\as{t}_{n-1},\as{t}_n$ recursively:
$$\mbox{for $i>1$, $\as{t}_i$ follows $(\as{t}_{i-2},\as{t}_{i-1})$ and $\as{t}_{-i}$ follows
$(\as{t}_{-i+2},\as{t}_{-i+1})$}.$$
With this notation,
\[R^m=\begin{cases}P_{\as{T}_{-m}}\oplus P_{\as{T}_{-m+2}}\oplus\cdots\oplus

P_{\as{T}_{-2}}\oplus P_{\as{T}_0}\oplus
P_{\as{T}_2}\oplus\cdots\oplus P_{\as{T}_m},&\text{for $m$ even}\\
P_{\as{T}_{-m}}\oplus P_{\as{T}_{-m+2}}\oplus\cdots\oplus P_{\as{T}_{-1}}\oplus
P_{\as{T}_1}\oplus\cdots\oplus P_{\as{T}_m},&\text{for $m$ odd,}
\end{cases}
\]
and the maps $g^m\colon R^m \to R^{m-1}$ are given in a similar fashion to the maps $f^n$ in the resolution of $\as{S}_0$.

We begin by finding maps $\psi_0$ and $\psi_1$ such that the
following diagram commutes.
\[(*)\quad\quad\xymatrix{
Q^n\ar[r]^{f^n}\ar[d]^{\psi_1}&Q^{n-1}\ar[d]^{\psi_0}\ar[dr]^{G^{n-1}_{i-1}(\as{S}_0)}\\
R^1\ar[r]^{g^1}&R^0\ar[r]^{g^0}&\as{S}_{i-1} }\]

Since $\as{t}_0=\as{s}_{i-1}$ and since $\as{s}_i$ follows $(\as{s}_{i-2},\as{s}_{i-1})$, we
see that $\as{s}_i$ is the successor of $\as{s}_{i-1}$ at one of its
endpoints. Furthermore, $\as{s}_{i-2}$ is in the successor sequence of
$\as{s}_{i-1}$ at the other endpoint $\beta$ of $\as{s}_{i-1}$.  Thus, after
reordering, we may assume that $\as{t}_1=\as{s}_i$ and that both $\as{s}_{i-2}$ and
$\as{t}_{-1}$ are in the successor sequence of $\as{s}_{i-1}$ at the vertex
$\beta$, with $\as{t}_{-1}$ being the successor of $\as{s}_{i-1}$
at $\beta$.  The following diagram illustrates this.
$$
\xymatrix{
&\circ\ar@{-}[d]&&\circ\ar@{-}[d]_{\as{s}_i=\as{t}_1}\ar@{.}[dr]\\
\circ\ar@{.}[ur]\ar@{-}[r]^{\as{s}_{i-2}}&\stackrel{\beta}{\circ}\ar@{-}[rr]^{\as{s}_{i-1}=\as{t}_0}&&{\circ}\ar@{-}[r]&\circ\\
&\circ\ar@{.}[lu]\ar@{-}[u]_{\as{t}_{-1}}&&\circ\ar@{-}[u]\ar@{.}[ur] }
$$
From this we see that  $ R^0=P_{\as{T}_0}=P_{\as{S}_{i-1}}$ and
$R^1=P_{\as{T}_{-1}}\oplus P_{\as{T}_1}=P_{\as{T}_{-1}}\oplus P_{\as{S}_{i}}$. Define
$\psi_0\colon Q^{n-1}\to R^0$ by
$\psi_0(x_{-n+1},x_{-n+3},\dots,x_{n-3},x_{n-1})=x_{i-1}$ and define
$\psi_1\colon Q^n\to R^1$ by the $2\times n$ matrix
\[\left(\begin{array}{cccccccc}
0&\cdots&0&-p({\as{t}_{-1}},\as{s}_{i-2})&0&0&\cdots&0\\
0&\cdots&0&0&v_{\as{s}_i}&0&\cdots&0\end{array}\right)
\] where the first nonzero column represents the map from
$P_{\as{S}_{i-2}}$ to $P_{\as{T}_{-1}}\oplus P_{\as{S}_i}$ and the next nonzero
column represents the  map from $P_{\as{S}_{i}}$ to $P_{\as{T}_{-1}}\oplus
P_{\as{S}_i}$.  The reader may now check the commutativity of ($*$).

Next we see from the diagram
\[\xymatrix{
Q^n\ar[r]^{f^n}\ar[d]^{\psi_1}&Q^{n-1}\ar[d]^{\psi_0}\ar[dr]^{G^{n-1}_{i-1}(\as{S}_0)}\\
R^1\ar[dr]_{G^1_i(\as{S}_{i-1})}\ar[r]^{g^1}&R^0\ar[r]^{g^0}&\as{S}_{i-1}\\
&\as{S}_{i}
 }\]
that $G^1_i(\as{S}_{i-1})\circ\psi_1 =
G^{n-1}_{i-1}(\as{S}_0)*G^1_i(\as{S}_{i-1})=G^n_i(\as{S}_0)$.

Next suppose $i\in\{-n,-n+2,\dots,-2\}$. The proof that
$G^{n-1}_{i+1}(\as{S}_0)*G^1_{i}(\as{S}_{i+1})=G^n_i(\as{S}_0)$ is similar and is left
to the reader.  In fact, interchanging $\as{s}_i$ with $\as{s}_{-i}$ for
$i\in \{2,\dots,n-2,n\}$ in the above proof, gives the result.

For $n$ even, it remains to consider the case when  $i=0$. Here we show that
\[G^{n-2}_{0}(\as{S}_0)*G^2_0(\as{S}_{0})=G^n_0(\as{S}_0).\]
This is clear if $n=2$
so assume $n\geqslant 4$. We find explicit maps $\theta_0$, $\theta_1$, and $\theta_2$ such
that the following diagram commutes.
\[(**)\quad\quad\xymatrix{
Q^n\ar[r]^{f^n}\ar[d]^{\theta_2}&Q^{n-1}\ar[d]^{\theta_1}\ar[r]^{f^{n-1}}&Q^{n-2}\ar[d]^{\theta_0}
\ar[dr]^{G^{n-2}_{0}(\as{S}_0)}\\
Q^2\ar[r]^{f^2}&Q^1\ar[r]^{f^1}&Q^0\ar[r]^{f^0}&\as{S}_{0} }\] We have
that $Q^i=P_{\as{S}_{-i}}\oplus P_{\as{S}_{-i+2}}\oplus \cdots \oplus
P_{\as{S}_{i-2}}\oplus P_{\as{S}_{i}}$. Thus, for $i\geqslant 3$, and $j=i-2k$, with
$k\geqslant 0$,
\[Q^i=P_{\as{S}_{-i}}\oplus P_{\as{S}_{-i+2}}\oplus\cdots \oplus P_{\as{S}_{-j-2}} \oplus
Q^j\oplus P_{\as{S}_{j+2}}\oplus\cdots\oplus P_{\as{S}_{i}}.\] We then take $\theta_0$,
$\theta_1$, and $\theta_2$ to be the projections $Q^i\to  Q^{j}$, for the
appropriate $i$'s and $j$'s. The reader may check that $(**)$
commutes. Noting that the composition $G^{2}_0(\as{S}_0)\circ \theta_2$ is
just $G^n_0(\as{S}_0)$, we have that
$G^{n-2}_{0}(\as{S}_0)*G^2_0(\as{S}_{0})=G^n_0(\as{S}_0)$.
This completes the study of the case when $n$ is even.

Now suppose
that $n$ is odd,  $n\geqslant 3$.  We claim that, for
$i\in\{1,3,\dots,n-2,n\}$,
\[G^{n-1}_{i-1}(\as{S}_0)*G^1_i(\as{S}_{i-1})=G^n_i(\as{S}_0),\] and
for $i\in\{-1,-3,\dots,-n+2,-n\}$,
\[G^{n-1}_{i+1}(\as{S}_0)*G^1_i(\as{S}_{i+1})=G^n_i(\as{S}_0).\]
For $i \neq -1,1$, the proof is analogous to the $n$ even
case.
For $i=-1$ or  $i=+1$, keeping
$\theta_0$ and $\theta_1$ as above, we get a commutative diagram
\[\xymatrix{
Q^n\ar[r]^{f^n}\ar[d]^{\theta_1}&Q^{n-1}\ar[d]^{\theta_0}
\ar[dr]^{G^{n-1}_{0}(\as{S}_0)}\\
Q^1\ar[r]^{f^1}&Q^0\ar[r]^{f^0}&\as{S}_{0} }\]
and it is now immediate that
\[G^{n-1}_{0}(\as{S}_{0})*G^1_i(\as{S}_{0})=G^n_i(\as{S}_0).\]

Thus we have shown that every basis element in $\Ext^n_{\Ag}(\as{S}_0,T)$ is the
product of an element in some $\Ext^{n-1}_{\Ag}(\as{S}_i,T)$ with an
element in $\Ext^1_{\Ag}(\as{S}_0,\as{S}_i)$, or is the product of an element in
some $\Ext^{n-2}_{\Ag}(\as{S}_i,T)$ with an element in
$\Ext^2_{\Ag}(\as{S}_0,\as{S}_i)$.  This completes the proof.
\end{proof}

We end this section with an immediate application to $\cK_2$ algebras. The concept of a $\cK_2$ algebra was introduced by Cassidy and Shelton in \cite[Definition 1.1]{CS}, where they defined a graded algebra $A$ to be $\cK_2$ if the Ext algebra, $E(A)$, is generated as an algebra in degrees 0, 1 and 2. This class of algebras is a natural generalization of the class of Koszul algebras.

\begin{cor}\label{cor:K2-fg-ext}
Let $K$ be an algebraically closed field and let $(\Gamma,\mfo,\mm,\mq)$ be a quantized Brauer graph with no truncated edges. Let $\Ag$ denote the associated Brauer graph algebra. Suppose $\Ag$ is length graded. Then $\Ag$ is a ${\mathcal K}_2$ algebra.
\end{cor}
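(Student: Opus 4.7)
The plan is to reduce to Theorem~\ref{thm:fg-ext} by two successive reductions that peel off, in turn, the multiplicity function and the quantizing function. First I would invoke the covering theory of \cite{GSS} as used at the start of Section~\ref{sec:coh-ring}: starting from $(\Gamma,\mfo,\mm,\mq)$, construct via \cite[Theorem 6.8]{GSS} a tower of quantized Brauer coverings whose topmost graph $(\Gamma',\mfo',\mm',\mq')$ has $\mm'\equiv 1$ and no loops or multiple edges. The crucial observation, already recorded immediately before Theorem~\ref{thm:fg-ext}, is that the identity $\mm(\alpha)\val_\Gamma(\alpha)=\mm_{\Gamma'}(\alpha')\val_{\Gamma'}(\alpha')$ (for $\alpha'$ lying over $\alpha$) means truncation is preserved under coverings. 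Hence $(\Gamma',\mfo',\mm')$ also has no truncated edges. By Proposition~\ref{prop:ext-struct}, $E(\Ag)$ is generated in a given set of degrees if and only if $E(\cA_{\Gamma'})$ is, so it suffices to prove the statement for $\cA_{\Gamma'}$.

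Next, since $\Ag$ is length graded and the covering construction of \cite{GSS} preserves length grading (the weight function on arrows is independent of their length), $\cA_{\Gamma'}$ is length graded as well. The standing hypothesis that $K$ is algebraically closed now allows us to invoke Proposition~\ref{prop:reduction of quantizing function} applied to $(\Gamma',\mfo',\mm'\equiv 1,\mq')$: the number of generators of $E(\cA_{\Gamma'})$ and their degrees do not depend on $\mq'$, so we may reduce to the case $\mq'\equiv 1$. This places us exactly in the setting of Theorem~\ref{thm:fg-ext}, which asserts that $E(\cA_{\Gamma'})$ is finitely generated in degrees $0$, $1$ and $2$.

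Transporting this conclusion back along the covering chain via Proposition~\ref{prop:ext-struct}, we obtain that $E(\Ag)$ is generated in degrees $0$, $1$ and $2$, that is, $\Ag$ is a $\cK_2$ algebra in the sense of \cite{CS}.

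The only point requiring any real care is the bookkeeping: ensuring that every hypothesis needed at each reduction step (length grading, algebraic closure of $K$, $\mm\equiv 1$, absence of loops and multiple edges, and crucially the absence of truncated edges) is inherited through both the covering reduction and the quantizing-function reduction. The preservation of the no-truncated-edges condition is exactly the content of the paragraph opening Section~\ref{sec:coh-ring}; the preservation of length grading under coverings follows from the construction in \cite{GSS}; and the preservation of $\mm\equiv 1$, absence of loops and multiple edges is what the Brauer weighting of \cite[Theorem 6.8]{GSS} is designed to achieve. Once these are in place, the proof is a short chain of citations and no additional computation is required.
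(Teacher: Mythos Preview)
Your proposal is correct and follows essentially the same approach as the paper: reduce via covering theory (Proposition~\ref{prop:ext-struct} and \cite[Theorem~6.8]{GSS}) to the case $\mm\equiv 1$ with no loops or multiple edges, noting that the absence of truncated edges is preserved, then use Proposition~\ref{prop:reduction of quantizing function} to eliminate $\mq$, and finally apply Theorem~\ref{thm:fg-ext}. You are slightly more explicit than the paper about the bookkeeping (in particular the preservation of length grading under coverings, which the paper leaves implicit), but the structure of the argument is identical.
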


\begin{proof}
From the discussion at the beginning of this section, we may assume that $(\Gamma,\mfo,\mm, \mq)$ is a quantized Brauer graph with $\mm\equiv 1$, with no loops or multiple edges and no truncated edges. From Proposition~\ref{prop:reduction of quantizing function}, we may assume further that $\mq \equiv 1$ since the number of generators of the Ext algebra, $E(\Ag)$, and their degrees do not depend on $\mq$. The result now follows from Theorem~\ref{thm:fg-ext} and the fact that $\Ag$ is a graded algebra.
\end{proof}

We consider ${\mathcal K}_2$ algebras and other generalizations of Koszul algebras further in the next section.

\section{Length graded Brauer graph algebras}\label{sec:truncated_edge}

In this section, $(\Gamma,\mathfrak o,\mm)$ is a Brauer graph and $\cA_{\Gamma}$ denotes the associated Brauer graph algebra.

Our first results lead to Theorem~\ref{thm:notK2}, where we characterise the Brauer graph algebras $\cA_{\Gamma}$ where the Ext algebra, $E(\cA_{\Gamma})$, is finitely generated in degrees 0, 1 and 2. This provides a converse to Theorem~\ref{thm:fg-ext}. In the remainder of the section we consider length graded Brauer graph algebras $\cA_{\Gamma}$. We recall the definition from \cite{GM} of a 2-$d$-Koszul algebra. We then complete our study of generalizations of Koszul algebras with Theorem~\ref{thm:2-d-Koszul}, where we classify the Brauer graph algebras that are 2-$d$-Koszul. Indeed, Theorem~\ref{thm:2-d-Koszul} shows that a Brauer graph algebra is a 2-$d$-Koszul algebra if and only if it is 2-$d$-homogeneous and a ${\mathcal K}_2$ algebra.

We begin with a result on syzygies of string modules where the Brauer graph $\Gamma$ may have both truncated and nontruncated edges.

\begin{prop}\label{prop:first-sy-truncated}
Let $(\Gamma,\mathfrak o,\mm)$ with $\mm\equiv 1$ be a
Brauer graph with no loops or multiple edges, and let
$\cA_{\Gamma}$ denote the associated Brauer graph algebra.
Let $\sigma=\as{s}_1^{e_1},\dots, \as{s}_{n}^{e_{n}}$ be an acceptable sequence
of weighted edges in $\Gamma$, and $n\geqslant 2$. Let $M=\st(\sigma)$.
\begin{enumerate}[\bf (1)]
\item Suppose that $n=2$. Then without loss of generality, $M$ is the uniserial module $\st(\as{s}_1^+,\as{s}_2^-)$.  Let $\alpha$ be the vertex at which $\as{s}_1$ and $\as{s}_2$ are both incident and let $\as{u}$ be the successor of $\hat{s}_2$ at $\alpha.$
\begin{enumerate}[$\bullet$]
\item If $\as{s}_1$ is a nontruncated edge, then the first syzygy of $M$ is isomorphic to the string module $\st(\as{u}^+,\as{s}_1^-,\as{t}^+)$ if $\as{u}\neq \as{s}_1$, and to the string module $\st(\as{s}_1^-,\as{t}^+)$ if $\as{u}= \as{s}_1$, where $\as{t}$ follows $(\as{s}_2,\as{s}_1)$.
\item If $\as{s}_1$ is a truncated edge, then the
first syzygy of $M$ is isomorphic to the string module $\st(\as{u}^+,\as{s}_1^-)$ if $\as{u}\neq \as{s}_1$, and to the simple module $\st(\as{s}_1^+)$  if $\as{u}= \as{s}_1$.
\end{enumerate}
\item Suppose that $n\pgq 3$ and $e_1 = +$.
\begin{enumerate}[$\bullet$]
\item If $\as{s}_1$ is a nontruncated edge, then the
first syzygy of $M$ is isomorphic to the string module $\st(\tau)$,
where $\tau$ begins as in Proposition~\ref{prop:first-sy}.
\item If $\as{s}_1$ is a truncated edge, then the
first syzygy of $M$ is isomorphic to the string module $\st(\tau)$,
where $\tau$ begins $\tau = \as{s}_1^{e_1^*},\as{s}_2^{e_2^*},\dots$
\end{enumerate}
\item Suppose that $n\pgq 3$ and $e_1 = -$.
\begin{enumerate}[$\bullet$]
\item If $\as{s}_1$ and $\as{s}_2$ are incident at the vertex $\alpha$ and if $\as{s}_2$ is the successor of $\as{s}_1$ at $\alpha$, then the
first syzygy of $M$ is isomorphic to the string module $\st(\tau)$,
where $\tau$ begins $\tau = \as{s}_2^{e_2^*},\as{s}_3^{e_3^*},\dots$
\item If $\as{s}_1$ and $\as{s}_2$ are incident at the vertex $\alpha$ and if $\as{s}_2$ is not the successor of $\as{s}_1$ at $\alpha$, then the first syzygy of $M$ is isomorphic to the string module $\st(\tau)$,
where $\tau$ begins $\tau = \as{t}^{+},\as{s}_2^{e_2^*},\as{s}_3^{e_3^*},\dots,$
and where $\as{t}$ is the successor of $\as{s}_1$ at $\alpha$.
\end{enumerate}
\item The first syzygy of $M$ may be fully determined using {\bf(1)}, {\bf(2)}, {\bf(3)} and Corollary~\ref{cor:string-rev}.
\end{enumerate}
\end{prop}

The proof is similar to that of Proposition~\ref{prop:first-sy}; note that these syzygies also appear in \cite{AG} where they consider the Ext algebra of a symmetric Brauer graph algebra.

For a string module $\st(\sigma)$, where $\sigma=\as{s}_1^{e_1},\dots, \as{s}_{n}^{e_{n}}$ is an acceptable sequence of weighted edges in $\Gamma$, we see that the only $\as{s}_i$ that can be truncated edges are $\as{s}_1$ and $\as{s}_n$.
We may use Proposition~\ref{prop:first-sy-truncated} to find all syzygies of the simple $\Ag$-modules.
In the case where $S$ is the simple $\Ag$-module corresponding to a truncated edge $s$ in $\Gamma$, we can simplify
Proposition~\ref{prop:first-sy-truncated}, and have the following corollary.

\begin{cor}\label{cor:truncated_edge}
Let $(\Gamma,\mathfrak o,\mm)$ with $\mm\equiv 1$ be a Brauer graph with no
loops or multiple edges, and assume that $\Gamma \neq \aa_2$. Let $\cA_{\Gamma}$ denote the associated
Brauer graph algebra.  Suppose that edge $s$ in $\Gamma$ is truncated at vertex $\alpha$. Define edges $\as{s}_i$ recursively, so that $\as{s}_0 = s$, $\as{s}_1$ is the successor of $s$ at the endpoint $\beta \neq \alpha$, and, for $m\pgq 1$, if $\as{s}_{m}$ is not truncated at the vertex which is not incident with $\as{s}_{m-1}$, then $\as{s}_{m+1}$ follows $(\as{s}_{m-1},\as{s}_{m})$.
Then,
\begin{enumerate}[\bf (1)]
\item there is some $n \pgq 1$ so that $\as{s}_n$ is a truncated edge in $\Gamma$;
\item for $1\ppq m \ppq n$, the $m$-th syzygy of $S$ is $\st(\as{s}_m^+, \as{s}_{m-1}^-)$;
\item the $(n+1)$-st syzygy of $S$ is $\st(\as{s}_n^+)$.
\end{enumerate}
Moreover, $S$ is a periodic $\Ag$-module.
\end{cor}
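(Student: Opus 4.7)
The plan is to establish the three syzygy claims in order and then derive periodicity from self-injectivity of $\Ag$. For termination (claim~$(1)$), note that the recurrence ``$\as{s}_{m+1}$ follows $(\as{s}_{m-1},\as{s}_m)$'', when defined, is invertible: given the pair $(\as{s}_m,\as{s}_{m+1})$, the shared vertex of these two edges is determined (by the absence of multiple edges in $\Gamma$), and $\as{s}_{m-1}$ is then the predecessor of $\as{s}_m$ at its opposite endpoint. If no $\as{s}_m$ with $m\geq 1$ were truncated, the sequence would be infinite; since $\Gamma_1$ is finite, some pair $(\as{s}_{i-1},\as{s}_i)=(\as{s}_{j-1},\as{s}_j)$ with $i<j$ must repeat, and invertibility of the recurrence forces the sequence to be periodic already from position $0$. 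Then $\as{s}_0=\as{s}_{j-i}$, contradicting that $\as{s}_0$ is truncated while $\as{s}_{j-i}$ is not.

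Claim~$(2)$ proceeds by induction on $m$. For $m=1$, Lemma~\ref{up-low-uniser}$(2)$ describes $P_{\as{s}_0}$ as uniserial with composition series dictated by the successor sequence at $\beta$, and its radical is precisely $\st(\as{s}_1^+,\as{s}_0^-)$. For the inductive step, one observes, as a direct consequence of the follows relation and the absence of multiple edges, that for every $m\geq 1$ the edge $\as{s}_m$ is itself the successor of $\as{s}_{m-1}$ at their unique common vertex $\alpha_m$. Assuming $\Omega^m(S)\cong\st(\as{s}_m^+,\as{s}_{m-1}^-)$ with $\as{s}_m$ nontruncated at the other endpoint $\gamma_m$, apply Proposition~\ref{prop:first-sy-truncated}$(1)$ first bullet directly to $\sigma=\as{s}_m^+,\as{s}_{m-1}^-$: this contributes the prefix $\as{s}_{m+1}^+,\as{s}_m^-$ to $\tau$, where $\as{s}_{m+1}$ follows $(\as{s}_{m-1},\as{s}_m)$, matching the recursion in the Corollary. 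To handle the right end, use Corollary~\ref{cor:string-rev} to reverse $\sigma$ and apply Proposition~\ref{prop:first-sy-truncated}$(2)$ first bullet, whose hypothesis is exactly the mirror identification of the previous sentence; its conclusion absorbs $\as{s}_{m-1}$ and contributes $\as{s}_m^-$ to the end of $\tau$. The two partial descriptions agree at $\as{s}_m^-$, giving $\tau=\as{s}_{m+1}^+,\as{s}_m^-$ and hence $\Omega^{m+1}(S)\cong\st(\as{s}_{m+1}^+,\as{s}_m^-)$.

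For claim~$(3)$, at $m=n$ the edge $\as{s}_n$ is truncated and $P_{\as{s}_n}$ is uniserial; since $\as{s}_n$ is the successor of $\as{s}_{n-1}$ at $\alpha_n$, the composition factor $\as{s}_{n-1}$ sits immediately above the socle copy of $\as{s}_n$ in $P_{\as{s}_n}$. The projection $P_{\as{s}_n}\twoheadrightarrow\st(\as{s}_n^+,\as{s}_{n-1}^-)$ therefore has kernel equal to $\soc(P_{\as{s}_n})=S_{\as{s}_n}$, so $\Omega^{n+1}(S)\cong\st(\as{s}_n^+)$. For the moreover statement, the Corollary defines a map $f$ on the finite set $T$ of simples at truncated edges by $S_t\mapsto\Omega^{n_t+1}(S_t)$; iterating and applying the pigeonhole principle yields $\Omega^N(S)\cong\Omega^M(S)$ for some $N>M\geq 0$, and since $\Ag$ is self-injective, $\Omega$ is invertible on the stable module category, so $\Omega^{N-M}(S)\cong S$. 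The main obstacle I anticipate is the careful bookkeeping in the inductive step of~$(2)$: ensuring that applying Proposition~\ref{prop:first-sy-truncated} on both ends of $\sigma$ (directly and after reversal) produces partial descriptions of $\tau$ that glue together to give exactly the length-two string $\st(\as{s}_{m+1}^+,\as{s}_m^-)$, with the boundary case $m=n$ collapsing correctly to the simple module.
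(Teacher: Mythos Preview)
Your proof is correct and follows exactly the route the paper intends: claims~(2) and~(3) are obtained by specializing Proposition~\ref{prop:first-sy-truncated} to the length-two string $\st(\as{s}_m^+,\as{s}_{m-1}^-)$, which is all the paper says by way of proof. The paper offers no argument for~(1) or for periodicity beyond a pointer to the Green/Brauer walk literature and to \cite{R}, so your self-contained pigeonhole arguments (invertibility of the ``follows'' recurrence for~(1), invertibility of $\Omega$ on the stable category for periodicity) actually supply more detail than the paper does.
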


The sequence $(\as{s}_0, \as{s}_1, \dots , \as{s}_n)$ of Corollary~\ref{cor:truncated_edge} is precisely the Green walk for $s = \as{s}_0$; see \cite{KR} and \cite{JAGreen}. We note that it is also known from \cite[Corollary 2.7]{R} that $S$ is a periodic $\Ag$-module when $s$ is a truncated edge in $\Gamma$.

For the next result, we recall that $\underline{\Hom}_{\Ag}(-,-)$ denotes the $\Ag$-module homomorphisms modulo those homomorphisms which factor through a projective $\Ag$-module.

\begin{thm}\label{thm:elt_in high_degree}
Let $(\Gamma,\mathfrak o,\mm)$ with $\mm\equiv 1$ be a Brauer graph with no
loops or multiple edges. Let $\cA_{\Gamma}$ denote the associated
Brauer graph algebra.  If $\Gamma$ has both truncated and
nontruncated edges, then the Ext algebra, $E(\Ag)$, is not generated in degrees 0, 1 and 2 alone.

In particular, if we have a sequence of edges $\as{s}_0, \as{s}_1,\dots, \as{s}_n$ with $n\pgq 2$, where
\begin{enumerate}[(i)]
\item $\as{s}_0$ and $\as{s}_n$ are truncated edges,
\item $\as{s}_1, \as{s}_2,\dots,\as{s}_{n-1}$ are nontruncated edges,
\item $\as{s}_1$ is the successor of $\as{s}_0$,
\item $\as{s}_{i+1}$ follows $(\as{s}_{i-1},\as{s}_{i})$ for $1 \ppq i \ppq n-1$,
\end{enumerate}
then there is an element of $\Ext_{\cA_{\Gamma}}^{n+1}(\as{S}_0,\as{S}_n)$ which is not in the subalgebra
of $E(\Ag)$ generated by the elements of degree at most $n.$
\end{thm}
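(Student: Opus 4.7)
My plan is to use the very explicit minimal resolution of $\as{S}_0$ given by Corollary~\ref{cor:truncated_edge} to pin down $\Ext^{n+1}_{\Ag}(\as{S}_0,\as{S}_n)$ completely, and then to rule out every possible Yoneda factorisation of its generator. Under the hypotheses, that corollary yields a minimal projective resolution of $\as{S}_0$ in which $Q^m = P_{\as{S}_m}$ for $0 \ppq m \ppq n$, $Q^{n+1} = P_{\as{S}_n}$, each $f^m$ for $1 \ppq m \ppq n$ is multiplication by the single arrow $a(\as{s}_{m-1},\as{s}_m)$, and $f^{n+1}$ is multiplication by the cycle $C_{\as{s}_n,\alpha_n}$ of length $\val(\alpha_n) \pgq 2$ at the nontruncated endpoint $\alpha_n$ of $\as{s}_n$. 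Consequently $\Ext^{n+1}_{\Ag}(\as{S}_0,\as{S}_n)$ is one-dimensional, spanned by the class $\eta$ of the canonical surjection $Q^{n+1} = P_{\as{S}_n} \to \as{S}_n$; and for each $1 \ppq m \ppq n$, the group $\Ext^m_{\Ag}(\as{S}_0,T)$ vanishes unless $T \cong \as{S}_m$, in which case it is one-dimensional, spanned by the class $G^m(\as{S}_0)$ of the canonical surjection $Q^m = P_{\as{S}_m} \to \as{S}_m$.

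Suppose $\eta = \sum_i \beta_i * \alpha_i$ with $\deg\beta_i = b_i$ and $\deg\alpha_i = n+1-b_i$, each in $[1,n]$. The previous paragraph forces each nonzero $\beta_i$ to be a scalar multiple of $G^{b_i}(\as{S}_0)$, so the proof reduces to showing that $G^b(\as{S}_0) * \alpha = 0$ for every $1 \ppq b \ppq n$ and every $\alpha \in \Ext^{n+1-b}_{\Ag}(\as{S}_b,\as{S}_n)$. The case $b = n$ is immediate: applying Corollary~\ref{cor:truncated_edge} to the truncated edge $\as{s}_n$ shows that $\Omega\as{S}_n$ has simple top corresponding to the successor of $\as{s}_n$ at its nontruncated vertex, which is distinct from $\as{s}_n$ since $\Gamma$ has no loops; hence $\Ext^1_{\Ag}(\as{S}_n,\as{S}_n) = 0$.

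For $1 \ppq b \ppq n-1$ I would work at the chain level. Lift $G^b(\as{S}_0)$ to a chain map $\phi_\bullet \colon Q^{\bullet+b} \to R^\bullet$ into a minimal projective resolution $R^\bullet$ of $\as{S}_b$, with $\phi_0 = \Id$. Iterating Proposition~\ref{prop:first-sy-truncated} along the Brauer walk $\as{s}_b, \as{s}_{b+1}, \dots, \as{s}_n$ shows that $\as{S}_{b+i}$ sits as a direct summand of the top of $\Omega^i\as{S}_b$ for each $0 \ppq i \ppq n-b$; I can therefore choose each $\phi_i$ (for $i \ppq n-b$) to be the inclusion of $P_{\as{S}_{b+i}}$ onto the corresponding summand of $R^i$, matching the single-arrow entries of $g^i$. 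The chain-map equation at level $n+1-b$ then forces
\[ g^{n+1-b}\,\phi_{n+1-b}(v_{\as{s}_n}) \;=\; \phi_{n-b}\bigl(f^{n+1}(v_{\as{s}_n})\bigr) \;=\; C_{\as{s}_n,\alpha_n}, \]
viewed inside the $P_{\as{S}_n}$-summand of $R^{n-b}$. Since $C_{\as{s}_n,\alpha_n}$ lies in $\rad^{\val(\alpha_n)} R^{n-b}$, and hence strictly inside $\rad\,\Omega^{n+1-b}\as{S}_b$, while $g^{n+1-b}$ induces an isomorphism between the tops of $R^{n+1-b}$ and $\Omega^{n+1-b}\as{S}_b$, I can adjust by a homotopy so that $\phi_{n+1-b}(v_{\as{s}_n}) \in \rad R^{n+1-b}$. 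Because $\alpha$ factors through $R^{n+1-b}/\rad R^{n+1-b}$, we conclude $G^b(\as{S}_0) * \alpha = 0$.

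The hard part will be the radical-filtration step in the third paragraph: one needs that the minimal generators of $\Omega^{n+1-b}\as{S}_b$ sit strictly above $C_{\as{s}_n,\alpha_n}$ in the radical filtration of $R^{n-b}$. Under the length grading of $\Ag$ this is clean, as the top of $\Omega^{n+1-b}\as{S}_b$ lies in length $n+1-b$ while $C_{\as{s}_n,\alpha_n}$ has length $n-b+\val(\alpha_n) \pgq n+1-b+1$; in the general setting one argues with the associated graded with respect to the radical filtration.
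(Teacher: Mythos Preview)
Your approach is genuinely different from the paper's, and the strategy is sound, but the justification of the decisive step is not.

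\textbf{Where the argument breaks.} You claim that the minimal generators of $\Omega^{n+1-b}\as{S}_b$ sit strictly above $C_{\as{s}_n,\alpha_n}$ in $R^{n-b}$, and you justify this by a length-degree count. But $\Ag$ is \emph{not} length graded in general: the type-one relations $C_{s,\alpha}-C_{s,\beta}$ are inhomogeneous as soon as $\val(\alpha)\ne\val(\beta)$, which is the generic situation under the hypotheses $\mm\equiv1$ and no loops or multiple edges. So your sentence ``the top of $\Omega^{n+1-b}\as{S}_b$ lies in length $n+1-b$'' has no meaning here, and the hand-wave ``one argues with the associated graded with respect to the radical filtration'' is not a proof. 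Passing to $\gr_\br\Ag$ changes the algebra, and there is no mechanism offered for transporting the conclusion back.

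\textbf{The easy fix.} What you actually need is much cheaper than any filtration argument: it suffices that $C_{\as{s}_n,\alpha_n}\in\rad\,\Omega^{n+1-b}\as{S}_b$. You already know from Proposition~\ref{prop:first-sy-truncated} that $\Omega^{n-b}\as{S}_b$ ends in $\as{s}_{n-1}^-,\as{s}_n^+$; since $\as{s}_n$ is truncated, the same proposition gives $\Omega^{n+1-b}\as{S}_b=\st(\dots,\as{s}_{n-1}^+,\as{s}_n^-)$. The $\as{s}_n^-$ at the end is precisely the element $C_{\as{s}_n,\alpha_n}$ (the socle of the $P_{\as{S}_n}$-summand of $R^{n-b}$), and it lies under the adjacent top $\as{S}_{n-1}$ in the uniserial piece $\st(\as{s}_{n-1}^+,\as{s}_n^-)$, which has length $\pgq2$ because $\as{s}_{n-1}\ne\as{s}_n$. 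Hence $C_{\as{s}_n,\alpha_n}$ is a socle element of a nonsimple indecomposable module, and therefore lies in its radical. Now $g^{n+1-b}(\rad R^{n+1-b})=\rad\,\Omega^{n+1-b}\as{S}_b$, so your lift $\phi_{n+1-b}$ can indeed be taken with image in $\rad R^{n+1-b}$, and the rest of your argument goes through.

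\textbf{How the paper does it.} The paper avoids the explicit chain lifting altogether. Using that $\Omega^{n+1}\as{S}_0\cong\as{S}_n$ and that $\Ag$ is selfinjective, it identifies the Yoneda product with the composition
\[
\underline{\Hom}_{\Ag}(\as{S}_n,\Omega^{n+1-b}\as{S}_b)\times\Hom_{\Ag}(\Omega^{n+1-b}\as{S}_b,\as{S}_n)\longrightarrow\Hom_{\Ag}(\as{S}_n,\as{S}_n).
\]
For this composite to be nonzero one needs $\as{S}_n$ in both the top and the socle of $\Omega^{n+1-b}\as{S}_b$; since $\as{s}_n$ is truncated that forces a mixed string $\st(\as{s}_n^-,\dots,\as{s}_n^+)$, through which the composite $\as{S}_n\to\Omega^{n+1-b}\as{S}_b\to\as{S}_n$ is visibly zero unless the string degenerates to $\as{S}_n$ itself. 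That last possibility is excluded because $\Omega^{n+1-b}\as{S}_b\cong\as{S}_n=\Omega^{n+1}\as{S}_0$ would force $\as{S}_b\cong\Omega^b\as{S}_0=\st(\as{s}_b^+,\as{s}_{b-1}^-)$, which is not simple. This is shorter and sidesteps the bookkeeping of building $\phi_\bullet$; your route, once repaired as above, trades that elegance for an entirely explicit computation.
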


\begin{proof}
Note that we continue to assume that the Brauer graph $\Gamma$ is connected. We shall use
Corollary~\ref{cor:truncated_edge} repeatedly without comment. Recall that
$\Ext_{\Ag}^k(\as{S}_0,T)\cong\Hom_{\Ag}(\Omega^k(\as{S}_0),T)\cong\underline{\Hom}_{\Ag}(\Omega^{k+\ell}(\as{S}_0),\Omega^\ell(T))$ for any simple module  $T$, and that for simple modules $S$, $T$ and $U,$ and integers $k$ and $\ell, $ the Yoneda product $\Ext_{\Ag}^\ell(T,U) \times \Ext_{\Ag}^k(S,T)
\rightarrow \Ext_{\Ag}^{k+\ell}(S,U)$ can be identified with the composition of maps
  $\underline{\Hom}_{\Ag}(\Omega^{k+\ell}(S),\Omega^\ell(T))\times \Hom_{\Ag}(\Omega^\ell(T),U)\rightarrow
  \Hom_{\Ag}(\Omega^{k+\ell}(S),U).$

Since $\as{s}_0$ is a truncated edge, we know that $\Ext^{n+1}_{\Ag}(\as{S}_0,\as{S}_n)\cong
\Hom_{\Ag}(\Omega^{n+1}(\as{S}_0),\as{S}_n)\cong\Hom_{\Ag}(\as{S}_n,\as{S}_n)\cong K$. We must prove that
homomorphisms from  $\as{S}_n$ to itself cannot be written as sums of Yoneda products of elements of
$\Ext_{\Ag}^i(\as{S}_0,T)$ and $\Ext_{\Ag}^j(T,\as{S}_n)$ with $T$ a simple $\Ag$-module,  $i+j=n+1$ and $i$ and $j$ nonzero.

First note that $\Ext^i_{\Ag}(\as{S}_0,T)\cong\Hom_{\Ag}(\Omega^i(\as{S}_0),T)\cong
\Hom_{\Ag}(\Top(\Omega^i(\as{S}_0)),T)$, and  we have
$\Top(\Omega^i(\as{S}_0))=\as{S}_i$  for some $i$ with $1\ppq i\ppq  n$. Therefore $T=\as{S}_i.$

Moreover,
$\Ext_{\Ag}^i(\as{S}_0,\as{S}_i)\cong\underline{\Hom}_{\Ag}(\Omega^{i+j}(\as{S}_0),\Omega^j(\as{S}_i))=\underline{\Hom}_{\Ag}(\Omega^{n+1}(\as{S}_0),\Omega^j(\as{S}_i))\cong\Hom_{\Ag}(\as{S}_n,\Omega^j(\as{S}_i))$
and $\Ext_{\Ag}^j(\as{S}_i,\as{S}_n)\cong
\Hom_{\Ag}(\Omega^j(\as{S}_i),\as{S}_n)$. These
two spaces must be nonzero; therefore $\as{S}_n$ must be in the socle and in the top of
$\Omega^j(\as{S}_i)$. Since $\as{s}_n$ is truncated, this means that $\Omega^j(\as{S}_i)$ is a mixed
string module, of the form $\st(\as{s}_n^-,\ldots,\as{s}_n^+)$, by Proposition \ref{prop:first-sy-truncated}. But then the composition
$\as{S}_n\rightarrow  \Omega^j(\as{S}_i)\rightarrow \as{S}_n$ must be zero, unless
$\Omega^j(\as{S}_i)=\as{S}_n$, since the first map must go into the socle and the second map comes
from the top.
Hence assume that $\Omega^j(\as{S}_i)=\as{S}_n$. Since $\as{S}_n=\Omega^{n+1}(\as{S}_0)$ and $\Ag$
is selfinjective, this implies that
$\as{S}_i=\Omega^{n+1-j}(\as{S}_0)=\Omega^{i}(\as{S}_0)$ so that
$\Omega^{i}(\as{S}_0)$ is simple. However,  $\Omega^{i}(\as{S}_0)$ is the nonsimple uniserial
module with top $\as{S}_i$ and socle $\as{S}_{i-1}$, so that we have a contradiction. Therefore any
Yoneda product $\Ext_{\Ag}^{n+1-i}(T,\as{S}_n)\times\Ext^i_{\Ag}(\as{S}_0,T)\rightarrow
\Ext^{n+1}_{\Ag}(\as{S}_0,\as{S}_n)$ with $1\ppq i\ppq n$ and $T$ simple is zero, and we have the
required result.
\end{proof}

The next theorem characterizes the Brauer graph algebras
$\cA_{\Gamma}$ where the Ext algebra, $E(\cA_{\Gamma})$, is finitely generated in degrees 0, 1 and 2, providing a converse to Theorem~\ref{thm:fg-ext}.

\begin{thm}\label{thm:notK2}
Let $(\Gamma,\mathfrak o,\mm)$ be a Brauer graph and let $\cA_{\Gamma}$ denote the associated
Brauer graph algebra. Then the Ext algebra, $E(\cA_{\Gamma})$, is finitely generated in degrees 0, 1 and 2 if and only if $\Gamma$ does not have both truncated and nontruncated edges.
\end{thm}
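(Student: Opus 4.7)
The proof splits into the two implications.

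For the $(\Leftarrow)$ direction I split into two subcases. If $\Gamma$ has no truncated edges, then Theorem~\ref{thm:fg-ext} directly gives that $E(\Ag)$ is finitely generated in degrees $0$, $1$ and $2$. If on the other hand every edge of $\Gamma$ is truncated, a short combinatorial argument using connectedness of $\Gamma$ shows that $\Gamma$ is either $\aa_2$ with $\mm \equiv 1$ or a star whose outer vertices all have multiplicity $1$: indeed, if $\beta$ is a vertex that is not a truncation vertex, then every edge incident with $\beta$ must have its other endpoint of valency $1$ and multiplicity $1$, so all neighbours of $\beta$ are leaves and connectedness forces $\beta$ to be the unique non-leaf vertex. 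In these cases, Propositions~\ref{prop:quadratic} and~\ref{prop:d-homogeneous} combined with Theorem~\ref{thm:charac_Koszul} show that $\Ag$ is Koszul or $d$-Koszul for some $d\pgq 3$; in both situations the Ext algebra is generated in degrees $0$, $1$ and $2$ (using \cite{GMMVZ} in the $d$-Koszul case).

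For the $(\Rightarrow)$ direction, I argue the contrapositive. Suppose $\Gamma$ has both truncated and nontruncated edges. By Proposition~\ref{prop:ext-struct} and the observation at the start of Section~\ref{sec:coh-ring} that truncation status is preserved under the Brauer covering, I reduce to the setting $\mm \equiv 1$ with no loops or multiple edges, while retaining both types of edges.

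The key step is to produce a Brauer walk $\as{s}_0,\as{s}_1,\ldots,\as{s}_n$ with $n\pgq 2$ satisfying the hypotheses of Theorem~\ref{thm:elt_in high_degree} (both ends truncated, all interior edges nontruncated). I argue by contradiction: if the successor of every truncated edge at its nontruncated endpoint were itself truncated, then picking any truncated edge $s$ with nontruncated endpoint $\beta$, all remaining edges at $\beta$ would be truncated at their non-$\beta$ endpoints, forcing every neighbour of $\beta$ to be a leaf. Connectedness of $\Gamma$ would then make $\Gamma$ equal to the star at $\beta$, in which every edge is truncated, contradicting our standing assumption. Hence a walk with $n\pgq 2$ exists, and Theorem~\ref{thm:elt_in high_degree} yields an element of $\Ext^{n+1}_{\Ag}(\as{S}_0,\as{S}_n)$ that is not in the subalgebra of $E(\Ag)$ generated by elements of degree at most $n$, contradicting that $E(\Ag)$ is generated in degrees $0$, $1$ and $2$.

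The main obstacle is the combinatorial existence argument producing the walk of length at least $2$ between two truncated edges; the remainder of the proof is essentially a bookkeeping exercise that invokes the classification results of Section~\ref{sec:Koszul}, the covering reduction of Section~\ref{sec:coverings}, and Theorems~\ref{thm:fg-ext} and~\ref{thm:elt_in high_degree}.
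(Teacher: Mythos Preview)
Your proof is correct and follows essentially the same route as the paper's: split on whether $\Gamma$ has no truncated edges (Theorem~\ref{thm:fg-ext}), only truncated edges (star/Nakayama case, $d$-Koszul), or both (covering reduction plus Theorem~\ref{thm:elt_in high_degree}). The only difference is that the paper invokes the first sentence of Theorem~\ref{thm:elt_in high_degree} directly, whereas you re-derive that conclusion from its ``In particular'' clause by supplying the combinatorial existence argument for a walk with $n\geqslant 2$; this extra step is correct but not needed once you cite the full statement of Theorem~\ref{thm:elt_in high_degree}.
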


\begin{proof}
If $\Gamma$ has no truncated edges, then it follows from Theorem~\ref{thm:fg-ext} that $E(\Ag)$ is finitely generated in degrees 0,1 and 2. So suppose that $\Gamma$ has at least one truncated edge. If all the edges are truncated then $\Gamma$ is a star (including the case $\Gamma = \aa_2$) and the associated Brauer graph algebra $\Ag$ is a Nakayama algebra. It is well-known that such an algebra is $d$-Koszul (for $d \pgq 2$) and hence, by \cite{GMMVZ}, its Ext algebra is generated in degrees (at most) 0, 1 and 2. (Indeed, a $d$-Koszul algebra is also length graded and so is ${\mathcal K}_2$.)

Thus we may assume that $\Gamma$ has both truncated and nontruncated edges. From the discussion at the start of Section~\ref{sec:coh-ring} and Proposition~\ref{prop:ext-struct}, we may assume that $(\Gamma,\mfo,\mm)$ is a Brauer graph with $\mm\equiv 1$, with no loops or multiple edges and which also has both truncated and nontruncated edges. It is now immediate from Theorem~\ref{thm:elt_in high_degree}, that $E(\Ag)$ cannot be generated only in degrees 0, 1 and 2.
\end{proof}

\begin{cor}\label{cor:notK2}
Let $(\Gamma,\mathfrak o,\mm)$ be a Brauer graph and let $\cA_{\Gamma}$ denote the associated
Brauer graph algebra. Suppose $\cA_{\Gamma}$ is length graded. Then $\cA_{\Gamma}$ is ${\mathcal K}_2$ if and only if $\Gamma$ does not have both truncated and nontruncated edges.
\end{cor}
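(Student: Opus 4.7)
The plan is to observe that this corollary is essentially an immediate consequence of Theorem~\ref{thm:notK2}, once one checks that the length grading hypothesis lets us invoke the definition of a $\cK_2$ algebra. Recall from \cite{CS} that a $\cK_2$ algebra is by definition a graded algebra $A$ whose Ext algebra $E(A)$ is generated, as an algebra, in degrees $0$, $1$ and $2$. Thus the property of being $\cK_2$ presupposes that $\cA_\Gamma$ carries a grading, which is exactly what the hypothesis ``$\cA_\Gamma$ is length graded'' supplies.

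For the forward implication, if $\cA_\Gamma$ is $\cK_2$ then by definition $E(\cA_\Gamma)$ is generated in degrees $0$, $1$ and $2$, hence it is in particular finitely generated in those degrees, and Theorem~\ref{thm:notK2} immediately yields that $\Gamma$ does not have both truncated and nontruncated edges.

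Conversely, suppose $\cA_\Gamma$ is length graded and $\Gamma$ does not have both truncated and nontruncated edges. Theorem~\ref{thm:notK2} gives that $E(\cA_\Gamma)$ is finitely generated in degrees $0$, $1$ and $2$; combined with the fact that $\cA_\Gamma$ is a graded algebra, this is precisely the statement that $\cA_\Gamma$ is $\cK_2$.

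There is no real obstacle here: the only subtlety is the bookkeeping about the grading, namely noting that both halves of Theorem~\ref{thm:notK2} (generation in degrees $0$, $1$, $2$) together with the length grading on $\cA_\Gamma$ exactly match the definition of $\cK_2$ from \cite{CS}. Accordingly the proof is a two-sentence deduction from Theorem~\ref{thm:notK2}.
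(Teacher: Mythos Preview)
Your proposal is correct and matches the paper's approach exactly: the paper states this corollary without proof, treating it as an immediate consequence of Theorem~\ref{thm:notK2} together with the definition of a $\cK_2$ algebra. The only content is the observation you make, namely that the length grading hypothesis is what allows one to speak of $\cA_\Gamma$ being $\cK_2$ at all.
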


We now introduce 2-$d$-Koszul algebras, a class of graded algebras which includes the Koszul algebras. Recall from Section~\ref{sec:Koszul} that an algebra $\Lambda = K{\mathcal Q}/I$ is a 2-$d$-homogeneous algebra if $I$ can be generated by homogeneous elements of lengths 2 and $d$.

Let $\Lambda = K{\mathcal Q}/I$ where $I$ is generated by homogeneous elements, so that $\Lambda$ is length graded with $\Lambda = \Lambda_0\oplus\Lambda_1\oplus\Lambda_2\oplus\cdots$.
Following Green and Marcos in \cite{GM}, and for a function $F\colon{\mathbb N} \to {\mathbb N}$, the algebra $\Lambda$ is said to be {\it $F$-determined} (respectively, {\it weakly $F$-determined}) if the $n$-th projective module in a minimal graded projective resolution of $\Lambda_0$ (viewed as a graded $\Lambda$-module in degree 0) can be generated in degree $F(n)$ (respectively, $\ppq F(n)$), for all $n \in {\mathbb N}$. Let $\delta\colon{\mathbb N} \to {\mathbb N}$ be the map given by
\[\delta(n) = \begin{cases}
\frac{n}{2}d & \text{if $n$ is even}\\
\frac{n-1}{2}d + 1 & \text{if $n$ is odd.}
\end{cases}\]
An algebra $\Lambda = K{\mathcal Q}/I$ is then said to be a {\it 2-$d$-determined} algebra if $I$ can be generated by homogeneous elements of degrees $2$ and $d$, and if $\Lambda$ is weakly $\delta$-determined. Thus a 2-$d$-determined algebra is a 2-$d$-homogeneous algebra. Furthermore, a 2-$d$-determined algebra is said to be {\it 2-$d$-Koszul} if its Ext algebra is finitely generated.

Assume that $(\Gamma,\mathfrak o,\mm,\mq)$ is a quantized Brauer graph and let
$\cA_{\Gamma}$ denote the associated Brauer graph algebra.
Suppose that $\Ag$ is graded with the length grading. Then the conclusions of Theorems~\ref{thm:fg-ext} and \ref{thm:notK2} are still true for $\Ag$, provided the
field $K$ satisfies one of the two conditions in Corollary~\ref{cor:reduction of quantizing function}. This is the case if $K$ is algebraically closed, for which see Proposition~\ref{prop:reduction of quantizing function} and also Corollary~\ref{cor:K2-fg-ext}.
This allows us to classify the 2-$d$-homogeneous Brauer graph algebras which are 2-$d$-determined.

\begin{thm}\label{thm:2-d-Koszul}
Let $(\Gamma,\mathfrak o,\mm,\mq)$ be a quantized Brauer graph, let $\cA_{\Gamma}$ denote the associated
Brauer graph algebra, and assume that either $\mq\equiv1 $ or  the field $K$ satisfies the conditions in Corollary~\ref{cor:reduction of quantizing function}. Let $d \pgq 3$ and suppose that $\cA_{\Gamma}$ is 2-$d$-homogeneous. Then the following are equivalent:
\begin{enumerate}[{\bf (1)}]
\item $\Gamma$ has no truncated edges,
\item $\cA_{\Gamma}$ is 2-$d$-determined,
\item $\cA_{\Gamma}$ is 2-$d$-Koszul,
\item $\cA_{\Gamma}$ is $\cK_2$.
\end{enumerate}
\end{thm}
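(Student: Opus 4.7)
I will prove the four equivalences by establishing $(1)\Leftrightarrow(4)$, $(1)\Rightarrow(3)\Rightarrow(2)$, and $(2)\Rightarrow(1)$. The first step is to invoke the reductions from Section~\ref{sec:coverings}: using Proposition~\ref{prop:reduction of quantizing function} (or the hypothesis on $K$ in Corollary~\ref{cor:reduction of quantizing function}) together with the covering theory, I may assume without loss of generality that $\mm\equiv 1$, $\mq\equiv 1$, and $\Gamma$ has no loops or multiple edges. Since $\Ag$ is $2$-$d$-homogeneous it is length graded, so Corollaries~\ref{cor:K2-fg-ext} and~\ref{cor:notK2} apply directly. From Proposition~\ref{prop:2dhomogeneous}, $\Gamma$ is either in case~(1) (no truncated edges, with $\val(\alpha)\mm(\alpha)=d$ at every vertex) or case~(2) (at least one truncated edge); in case~(2) the graph must also contain nontruncated edges, for otherwise $\Ag$ would be $d$-homogeneous rather than $2$-$d$-homogeneous.

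With this in hand the equivalence $(1)\Leftrightarrow(4)$ is immediate: Corollary~\ref{cor:K2-fg-ext} gives $(1)\Rightarrow(4)$, and Corollary~\ref{cor:notK2}, together with the observation that case~(2) always mixes truncated and nontruncated edges, gives $(4)\Rightarrow(1)$. Since $(3)\Rightarrow(2)$ holds by definition of $2$-$d$-Koszul, and since $\cK_2$ implies finite generation of $E(\Ag)$ for a finite-dimensional algebra, the remaining content reduces to $(1)\Leftrightarrow(2)$; combining this with $(1)\Leftrightarrow(4)$ will then produce $(3)$ from $(1)$, closing the cycle.

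For $(1)\Rightarrow(2)$, I will exploit the explicit minimal projective resolution of Theorem~\ref{thm:min-resol}. In case~(1) with $\mm\equiv 1$ every vertex of $\Gamma$ has valency exactly $d$, so each nonzero entry $p(\as{s}_j,\as{s}_{j\pm1})$ appearing in the matrix of $f^n$ is a path of length $1$ or $d-1$ at the shared vertex of $\as{s}_j$ and $\as{s}_{j\pm1}$: the short one occurs when the target is the immediate successor in the recursive definition of the $\as{s}_i$'s, and the long one when it is the predecessor. A direct induction on $n$ then shows that the summand $P_{\as{S}_j}$ of $Q^n$ is generated in degree $n+(d-2)(n-|j|)/2$; this maximum is attained at the middle index and equals $\delta(n)$ there, so $\Ag$ is $\delta$-determined and in particular $2$-$d$-determined.

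The main obstacle is $(2)\Rightarrow(1)$, which I will establish by contrapositive. Suppose $\Gamma$ has a truncated edge $\as{t}$; its nontruncated endpoint $\alpha$ has valency $d$, and by the ``no two successors truncated'' condition of Proposition~\ref{prop:2dhomogeneous}(2) the predecessor $\as{s}$ of $\as{t}$ at $\alpha$ is nontruncated, while the successor $\as{u}$ of $\as{t}$ at $\alpha$ is distinct from $\as{s}$ since $d\geqslant3$. I will trace the first three syzygies of $\as{S}$ using Proposition~\ref{prop:first-sy-truncated}. The first syzygy is a positive string module with $\as{T}$ on its left end, so $Q^1$ has a summand $P_{\as{T}}$ generated in degree $1$. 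Because $\as{t}$ is truncated, the left-hand syzygy operation flips the sign at $\as{t}$ without introducing a new edge, placing $\as{S}$ itself as a top of $\Omega^2(\as{S})$; this $\as{S}$ occupies depth $d-1$ inside the uniserial projective $P_{\as{T}}$, so the corresponding summand of $Q^2$ is generated in degree exactly $d=\delta(2)$, just at the boundary. At the next step, Proposition~\ref{prop:first-sy-truncated}(2) applies to the leftmost pair $(\as{t}^-,\as{s}^+)$: since $\as{s}$ is the predecessor rather than the successor of $\as{t}$ at $\alpha$, the operation inserts $\as{u}^+$ on the left, so $\as{U}$ becomes a top of $\Omega^3(\as{S})$. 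Because $\as{u}$ lies at depth $2$ in the arm at $\alpha$ of $P_{\as{S}}$ (a summand of $Q^2$ at degree $d$), the summand of $Q^3$ corresponding to $\as{U}$ is generated in degree $d+2>d+1=\delta(3)$, violating the $2$-$d$-determined condition. The hard part will be keeping the bookkeeping of the string-module syzygy calculations straight through the two truncation-induced sign flips; the underlying structural point is that a single truncated neighbour forces one jump of size $d-1$ that the $\delta$-budget cannot accommodate at the third step for any $d\geqslant 3$.
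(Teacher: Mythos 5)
Your architecture matches the paper's: {\bf(1)}$\Leftrightarrow${\bf(4)} via Corollary~\ref{cor:notK2} (using that a 2-$d$-homogeneous Brauer graph algebra cannot have only truncated edges), {\bf(3)}$\Rightarrow${\bf(2)} by definition, {\bf(2)}/{\bf(1)}$\Rightarrow${\bf(3)} by combining 2-$d$-determinedness with the $\cK_2$ property, and the two substantive directions by degree bookkeeping in explicit resolutions. Your degree formula $n+(d-2)(n-|j|)/2$ for the summands of $Q^n$ in {\bf(1)}$\Rightarrow${\bf(2)} is exactly the paper's $n+j(d-2)$ and peaks at $\delta(n)$, and your contrapositive for {\bf(2)}$\Rightarrow${\bf(1)} is in fact slightly cleaner than the paper's: you take any truncated edge $\as{t}$ and its predecessor $\as{s}$ at the nontruncated endpoint (nontruncated by the ``no two successors truncated'' clause of Proposition~\ref{prop:2dhomogeneous}), and you only track the left end of the strings, so you avoid the paper's Brauer walk to a second truncated edge and its case analysis on whether the auxiliary edges $t$ and $u$ are truncated; the degree-$(d+2)$ generator of $Q^3$, exceeding $\delta(3)=d+1$, comes out uniformly and agrees with the paper's computation.

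There is, however, a genuine gap at your first step. You reduce ``without loss of generality'' to $\mm\equiv1$, $\mq\equiv1$ and no loops or multiple edges, and then prove {\bf(1)}$\Leftrightarrow${\bf(2)} in that reduced setting, invoking Theorem~\ref{thm:min-resol} and Proposition~\ref{prop:first-sy-truncated}, which are only available under those hypotheses. But 2-$d$-determined (weak $\delta$-determinedness) is a statement about the \emph{internal} (length) degrees of the generators of the projectives in the minimal graded resolution of $\Lambda_0$, whereas the results you cite to justify the reduction --- Proposition~\ref{prop:ext-struct} and the discussion around it, and Proposition~\ref{prop:reduction of quantizing function}/Corollary~\ref{cor:reduction of quantizing function} --- only control the ring structure of $E(\Ag)$ and the cohomological degrees of its generators. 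It is true that the covering of \cite{GSS} and the graded twist eliminating $\mq$ also preserve internal degrees (arrows lie over arrows, so the equivalence of graded module categories respects the length grading of minimal resolutions), but this needs to be stated and proved, and it is precisely what the paper avoids: in its proof of {\bf(1)}$\Leftrightarrow${\bf(2)} it does \emph{not} reduce, but works with the original quantized Brauer graph, allowing multiplicities, loops and multiple edges (hence the length-$(d-1)$ paths $\tilde{p}=p\cdot C^{\mm(\alpha)-1}$ and the remark about a loop occurring twice in a successor sequence). As written, your proof of the equivalence of {\bf(1)} and {\bf(2)} therefore only covers the reduced case; to complete it you must either prove that weak $\delta$-determinedness transfers along the covering and the twist, or redo your syzygy and resolution computations for a general quantized Brauer graph as the paper does.
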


\begin{proof} The equivalence between {\bf(1)} and {\bf(4)} follows from Corollary~\ref{cor:notK2},
since a $2$-$d$-homogeneous Brauer graph algebra cannot have only truncated edges. By definition, {\bf(3)} implies {\bf(2)}. We prove that {\bf(1)} and {\bf(2)} are equivalent, and then that {\bf(2)} implies {\bf(3)}. Suppose throughout that $\Ag$ is $2$-$d$-homogeneous.

{\bf(2)} $\Rightarrow$ {\bf(1)}. Suppose that $\Gamma$ has at least one truncated edge $s$. By  Proposition
\ref{prop:2dhomogeneous}, we know that no two successors are truncated and that for any vertex
$\alpha$ in $\Gamma$ we have $\mm(\alpha)\val(\alpha)\in\set{1,d}.$ There are edges
$\as{s}_0=s,\as{s}_1,\ldots,\as{s}_{n-1},\as{s}_n$ such that $\as{s}_n$ is truncated, $n\pgq2$,
$\as{s}_1,\ldots,\as{s}_{n-1}$ are not truncated, $\as{s}_1$ is the successor of $\as{s}_0$ at the
vertex $\alpha_1$ and for
each integer $m$ with $1\ppq m<n$ the edge $\as{s}_{m+1}$ follows $(\as{s}_{m-1},\as{s}_m)$ at the
vertex $\alpha_{m+1},$ as in Corollary \ref{cor:truncated_edge}. Note that when $\as{s}_m$ is a
loop, it occurs twice in the successor sequence of $\as{s}_{m-1}$; when we say that
 $\as{s}_{m+1}$ follows $(\as{s}_{m-1},\as{s}_m)$ in this case, we have considered one instance of
 the loop being in the successor sequence of $\as{s}_{m-1}$, and  $\as{s}_{m+1}$ is the successor of
 $\as{s}_{m-1}$ at the other instance of this loop  in the successor sequence of $\as{s}_{m-1}$. We
 shall now prove that the third projective in a minimal projective resolution of the simple
 $\Ag$-module $\as{S}_{n-1}$ has a generator in degree $d+2>\delta(3)$ so that $\Ag$ cannot be
 $2$-$d$-determined.

Let $t$ be the successor of $\as{s}_{n-1}$ at $\alpha_{n-1}$ and, if $t$ is not truncated, let $u$
follow $(\as{s}_{n-1},t).$ Then the indecomposable projective module $P_{\as{S}_n}$ is the uniserial
module of length $d$ whose top and socle are $\as{S}_n$, which we represent by
$P_{\as{S}_n}=\vcenter{\tiny\xymatrix@R=2pt@M=0pt{\as{S}_n\\Y\\\as{S}_{n-1}\\\as{S}_n}}$. The
indecomposable projective module $P_{\as{S}_{n-1}}$ is biserial, we represent it by
$P_{\as{S}_{n-1}}=\vcenter{\tiny\xymatrix@R=3pt@C=2pt@M=0pt{&\as{S}_{n-1}\ar@{-}[dl]\\T&&\as{S}_{n}\ar@{-}[ul]\\X\ar@{-}[dr]&&Y\\&\as{S}_{n-1}\ar@{-}[ur]}}$,
where $\vcenter{\tiny\xymatrix@R=2pt@M=0pt{T\\X\\\as{S}_{n-1}}}$ is the uniserial module of
length $d-1$ with top $T$ and socle $\as{S}_{n-1}$ (defined from the successor sequence of $t$ at $\alpha_{n-1}$). We
shall also need $P_T$ and $P_U$ which we represent as follows:
\begin{xalignat*}{2}
&P_T=
\begin{cases}
\vcenter{\tiny\xymatrix@R=2pt@C=2pt@M=0pt{T\\X\\\as{S}_{n-1}\\T}} &\text{if $t$ is truncated}\\\\
\vcenter{\tiny\xymatrix@R=2pt@C=5pt@M=0pt{&T\ar@{-}[dr]\\U\ar@{-}[ur]&&X\\Z&&\as{S}_{n-1}\\&T\ar@{-}[ul]\ar@{-}[ur]}} &\text{if $t$ is not truncated}
\end{cases}
&\text{ and }&P_U=
\begin{cases}
\vcenter{\tiny\xymatrix@R=2pt@C=2pt@M=0pt{U\\Z\\T\\U} }   &\text{if $u$ is truncated}\\\\
\vcenter{\tiny\xymatrix@R=2pt@C=5pt@M=0pt{&U\ar@{-}[dl]\ar@{-}[dr]\\Z&&L\\T&&\\&U\ar@{-}[ul]\ar@{-}[uur]} }  &\text{if $u$ is not truncated,}
\end{cases}
\end{xalignat*}
where $U$, $Z$ and $L$ are uniserial modules defined as before from the appropriate successor
sequences.

Let $(Q^\bullet,f^\bullet)$ be a minimal projective resolution of $\as{S}_{n-1}$. Then $Q^1$ is
generated in degree $1$ (by arrows in the quiver). If the edge $t$ is truncated, then
$\Omega^2(\as{S}_{n-1})=\vcenter{\tiny\xymatrix@R=3pt@C=2pt@M=0pt{&\as{S}_{n-1}\ar@{-}[dl]\ar@{-}[dr]\\T&&\as{S}_n}}$, the
module $Q^2$ is generated in degree $d$, the next syzygy
is $\Omega^3(\as{S}_{n-1})=\vcenter{\tiny\xymatrix@R=3pt@C=2pt@M=0pt{X&&Y\\&\as{S}_{n-1}\ar@{-}[ul]\ar@{-}[ur]}}$ and
$Q^3$ is generated in degree $d+2>\delta(3).$
 If the edge $t$ not is truncated, then
$\Omega^2(\as{S}_{n-1})=\vcenter{\tiny\xymatrix@R=1pt@C=5pt@M=0pt{U&\\Z&&\as{S}_{n-1}\\&T\ar@{-}[ul]\ar@{-}[ur]&&\as{S}_n\ar@{-}[ul]}}$, and
the
module $Q^2$ is generated in degrees $2$ and $d$. If the edge $u$ is truncated, the next syzygy
is $\Omega^3(\as{S}_{n-1})=\vcenter{\tiny\xymatrix@R=4pt@C=3pt@M=0pt{&T\ar@{-}[dl]\ar@{-}[dr]\\U&&X&&Y\\&&&\as{S}_{n-1}\ar@{-}[ul]\ar@{-}[ur]}}$ and
$Q^3$ is generated in degrees $d+1$ and $d+2>\delta(3).$ Finally, if  the edge $u$ is not truncated, the next syzygy
is $\Omega^3(\as{S}_{n-1})=\vcenter{\tiny\xymatrix@R=4pt@C=3pt@M=0pt{L\ar@{-}[dr]&&T\ar@{-}[dl]\ar@{-}[dr]\\&U&&X&&Y\\&&&&\as{S}_{n-1}\ar@{-}[ul]\ar@{-}[ur]}}$ and
$Q^3$ is generated in degrees $3$, $d+1$ and $d+2>\delta(3).$

Therefore, if $\Gamma$ has a truncated edge, then $\Ag$ is not $2$-$d$-determined. Hence {\bf(2)} implies {\bf(1)}.

{\bf(1)} $\Rightarrow$ {\bf(2)}. Now assume that $\Gamma$ does not have any truncated edges. Then by  Proposition
\ref{prop:2dhomogeneous}, we know that for any vertex
$\alpha$ in $\Gamma$ we have $\mm(\alpha)\val(\alpha)=d.$ To prove that $\Ag$ is $2$-$d$-determined,
we shall follow the lines of  Proposition \ref{prop:projQ} and give a projective resolution of the
simple module $\as{S}_0$, in which the projectives will be generated in appropriate degrees. We need
to be more precise in our notation, since we are no longer assuming that there are no multiple
edges; in particular, in the notation $a(s,t,\alpha)$ defined in Section~\ref{sec:coverings}, the vertex $\alpha$ must be specified.

Let $\as{s}_0$ be an edge in $\Gamma$, and let $\alpha_0$ and $\beta_0$ be its endpoints.  Let
$\as{s}_1$ be the successor of $\as{s}_0$ at $\alpha_0$ and let $\alpha_1$ be the other endpoint of $\as{s}_1.$  Let
$\as{s}_{-1}$ be the successor of $\as{s}_0$ at $\beta_0$ and let $\beta_1$ be the other endpoint of
$\as{s}_{-1}.$ For an integer $n\pgq 2$ and for any integer $i$ with $2\ppq i\ppq n,$ let $\as{s}_i$
be the the edge that follows $(\as{s}_{i-2},\as{s}_{i-1})$  at $\alpha_{i-1}$, let $\alpha_i$ be the other endpoint of $\as{s}_i,$ let $\as{s}_{-i}$
be the the edge that follows $(\as{s}_{-i+2},\as{s}_{-i+1})$  at $\beta_{i-1}$, let $\beta_i$ be the other endpoint of $\as{s}_{-i}.$
Then the projectives $Q^n$ in a minimal projective resolution of $\as{S}_0$ are as in Proposition~\ref{prop:projQ}.  The map $f^1:Q^1\rightarrow Q^0$ is given by the matrix
\[ (a(\as{s}_0,\as{s}_{-1},\beta_0)\quad a(\as{s}_0,\as{s}_1,\alpha_0)). \] In order to define the
maps $f^n:Q^n\rightarrow Q^{n-1}$ for $n\pgq 2$, we need to define,
for $0\ppq j\ppq \lfloor \frac{n-1}{2}\rfloor,$  the following paths of length $d-1:$
\begin{align*}
\tilde{p}(\as{s}_{n-2j-1},\as{s}_{n-2j-2},\alpha_{n-2j-2})&:=p(\as{s}_{n-2j-1},\as{s}_{n-2j-2},\alpha_{n-2j-2})C_{\as{s}_{n-2j-2},\alpha_{n-2j-2}}^{m(\alpha_{n-2j-2})-1}\quad\text{
and}\\
\tilde{p}(\as{s}_{-n+2j+1},\as{s}_{-n+2j+2},\beta_{n-2j-2})&:=p(\as{s}_{-n+2j+1},\as{s}_{-n+2j+2},\beta_{-n+2j+2})C_{\as{s}_{-n+2j+2},\beta_{n-2j-2}}^{m(\beta_{n-2j-2})-1}.
\end{align*}
If $n\pgq 2$ is even and $0\ppq j\ppq \frac{n}{2}-1$, the matrix of $f^n:Q^n\rightarrow Q^{n-1}$ is
described as follows,
\[
\begin{cases}
\text{its $(j,j)$-entry is } -a(\as{s}_{-n+2j+1},\as{s}_{-n+2j},\beta_{n-2j-1})\\
\text{its $(j,j+1)$-entry is } \tilde{p}(\as{s}_{-n+2j+1},\as{s}_{-n+2j+2},\beta_{n-2j-2})\\
\text{its $(n-1-j,n-1-j)$-entry is } -\tilde{p}(\as{s}_{n-2j-1},\as{s}_{n-2j-2},\alpha_{n-2j-2})\\
\text{its $(n-1-j,n-j)$-entry is } a(\as{s}_{n-2j-1},\as{s}_{n-2j},\alpha_{n-2j-1}).
\end{cases}
 \]
If $n\pgq 3$ is odd and $0\ppq j\ppq \frac{n-3}{2}$, the matrix of $f^n:Q^n\rightarrow Q^{n-1}$ is
described as follows,
\[
\begin{cases}
\text{its $(j,j)$-entry is } a(\as{s}_{-n+2j+1},\as{s}_{-n+2j},\beta_{n-2j-1})\\
\text{its $(j,j+1)$-entry is } \tilde{p}(\as{s}_{-n+2j+1},\as{s}_{-n+2j+2},\beta_{n-2j-2})\\
\text{its $(n-1-j,n-1-j)$-entry is } \tilde{p}(\as{s}_{n-2j-1},\as{s}_{n-2j-2},\alpha_{n-2j-2})\\
\text{its $(n-1-j,n-j)$-entry is } a(\as{s}_{n-2j-1},\as{s}_{n-2j},\alpha_{n-2j-1})\\
\text{its $(\frac{n-1}{2},\frac{n-1}{2})$-entry is } a(\as{s}_{0},\as{s}_{-1},\beta_0)\\
\text{its $(\frac{n-1}{2},\frac{n+1}{2})$-entry is } a(\as{s}_{0},\as{s}_{1},\alpha_{0}).\\
\end{cases}
 \]
 The projective $Q^1$ is generated in degree $1$ (by arrows in the quiver $\Qg$), the projective
 $Q^2$ is generated in degrees $2$ and $d$ (by some elements in the set of minimal generators of the
 ideal $\Ig$, but this can also be seen directly). It can then be seen inductively, using the
 resolution given above, that $Q^n$ is generated in degrees at most $\delta(n)$; more precisely, the
 modules $P_{\as{S}_{n-2j}}$ and $P_{\as{S}_{-n+2j}}$ are generated in degrees $n+j(d-2)$ with $0\ppq
 j\ppq \lfloor \frac{n}{2}\rfloor.$ This is true of the resolution of any simple $\Ag$-module, and therefore $\Ag$ is $2$-$d$-determined. Thus {\bf(1)} implies {\bf(2)}, and we have that {\bf(1)}, {\bf(2)} and {\bf(4)} are equivalent.

{\bf(2)} $\Rightarrow$ {\bf(3)}.
Suppose $\Ag$ is $2$-$d$-determined. Then, by the equivalence of {\bf (2)} and {\bf(4)}, we know that $\Ag$ is ${\mathcal K}_2$. Hence the Ext algebra, $E(\Ag)$, is generated in degrees 0, 1 and 2 and so is finitely generated. Thus $\Ag$ is $2$-$d$-Koszul and hence {\bf(3)} holds. This completes the proof.
\end{proof}

This theorem gives a positive answer for Brauer graph algebras to all three questions asked by Green and Marcos in \cite[Section 5]{GM}. Specifically, we have shown, for a Brauer graph algebra $\Ag$ which is 2-$d$-homogeneous, that
\begin{enumerate}
\item[(1)] if $\Ag$ is a 2-$d$-determined algebra, then $E(\Ag)$ is finitely generated;
\item[(2)] if $\Ag$ is a 2-$d$-determined algebra and if $E(\Ag)$ is finitely generated, then $E(\Ag)$ is generated in degrees 0, 1 and 2;
\item[(3)] if $E(\Ag)$ is generated in degrees 0, 1 and 2, then $\Ag$ is a 2-$d$-determined algebra.
\end{enumerate}

In addition, note that algebras $\Lambda_N$ of \cite{ST1}, where $N \pgq 1$, are all Brauer graph algebras, where $\Gamma$ is the oriented cycle with every vertex having multiplicity $N$. Moreover, these algebras are 2-$2N$-homogeneous and, using Theorem~\ref{thm:2-d-Koszul}, we see that they are also 2-$2N$-Koszul. This example gives a new class of 2-$d$-Koszul algebras.

In contrast to Theorem~\ref{thm:2-d-Koszul}, a negative answer was given by Cassidy and Phan to the first two questions posed by Green and Marcos in \cite{GM}. In \cite{CP}, Cassidy and Phan give specific infinite-dimensional algebras $A$ and $B$ such that $A$ is 2-4-determined but $E(A)$ is not finitely generated, and $B$ is 2-4-determined of infinite global dimension, $E(B)$ is finitely generated, but $E(B)$ is not generated in degrees 0, 1 and 2. Another generalization of Koszul is given by Herscovich and Rey in \cite{HR}, where they study multi-Koszul algebras. In particular, they remark that a left $\{2,d\}$-multi-Koszul algebra is a 2-$d$-Koszul algebra in the sense of \cite{GM}, though the converse does not hold. However, they showed that the Ext algebra of a multi-Koszul algebra $A$ is generated in degrees 0, 1 and 2, so that the algebra $A$ itself is ${\mathcal K}_2$.

\end{document}